\g@addto@macro\normalsize{%
  \setlength\abovedisplayskip{4pt}
  \setlength\belowdisplayskip{4pt}
  \setlength\abovedisplayshortskip{4pt}
  \setlength\belowdisplayshortskip{4pt}
}
\numberwithin{equation}{section}
\crefname{section}{Section}{Sections}
\crefname{subsection}{Subsection}{Subsections}
\crefname{condition}{Condition}{Conditions}
\crefname{hypothesis}{Hypothesis}{Conditions}
\crefname{assumption}{Assumption}{Assumptions}
\crefname{lemma}{Lemma}{Lemmas}
\crefname{claim}{Claim}{Claims}
\newtheorem{theorem}{Theorem}[section]
\newtheorem{lemma}[theorem]{Lemma}
\newtheorem{proposition}[theorem]{Proposition}
\newtheorem{definition}[theorem]{Definition}
\newtheorem{remark}[theorem]{Remark}        
\numberwithin{equation}{section}
\newcommand{\vo}{\vec{o}\@ifnextchar{^}{\,}{}}
\def\YYint#1#2#3{{\setbox0=\hbox{$#1{#2#3}{\iint}$}
    \vcenter{\hbox{$#2#3$}}\kern-.50\wd0}}
\def\XXint#1#2#3{{\setbox0=\hbox{$#1{#2#3}{\int}$}
    \vcenter{\hbox{$#2#3$}}\kern-.50\wd0}}
\def\namedlabel#1#2{\begingroup
   \def\@currentlabel{#2}%
   \label{#1}\endgroup
}
\newcommand{\rmh}[1]{\mathpalette{\raisem@th{#1}}}
\newcommand{\raisem@th}[3]{\hspace*{-1pt}\raisebox{#1}{$#2#3$}}
\newcommand{\descref}[2]{\hyperref[#1]{\textnormal{\textcolor{black}{(}\textcolor{blue}{\bf #2}\textcolor{black}{)}}}}
\newcommand{\dref}[2]{\hyperref[#1]{\textcolor{black}{(}\textcolor{blue}{\bf #2}\textcolor{black}{)}}}
\newcommand{\ve}{\varepsilon}
\DeclareMathOperator{\loc}{loc}
\newcounter{whitney}
\newcounter{ineqcounter}
\def\ps@pprintTitle{%
\let\@oddhead\@empty
\let\@evenhead\@empty
\def\@oddfoot{}%
\let\@evenfoot\@oddfoot}
\begin{document}

\begin{frontmatter}

\title{Combined Effects of Homogenization and Singular Perturbations : A Bloch Wave Approach}

\author[myaddress]{Vivek Tewary\tnoteref{thanksfirstauthor}}
\ead{vivektewary@gmail.com, vivek2020@tifrbng.res.in}

\address[myaddress]{Tata Institute of Fundamental Research, Centre for Applicable Mathematics,Bangalore, Karnataka, 560065, India}

\begin{abstract}
 We study Bloch wave homogenization of periodically heterogeneous media with fourth order singular perturbations. We recover different homogenization regimes depending on the relative strength of the singular perturbation and length scale of the periodic heterogeneity. The homogenized tensor is obtained in terms of the first Bloch eigenvalue. The higher Bloch modes do not contribute to the homogenization limit.
\end{abstract}

\begin{keyword}
Homogenization ; Bloch waves ; Singular Perturbation ; Bilaplacian
 \MSC[2010] 35B27 ; 74Q05 ; 47A55 ; 34E15 ; 35P05
\end{keyword}

\end{frontmatter}
\begin{singlespace}
\tableofcontents
\end{singlespace}
\section{Introduction}\label{intro}
The formation of shear bands in elasticity is described by a degenerate operator of elliptic-hyperbolic type~\cite{aifantisMicrostructuralOriginCertain1984,leeConditionsShearBanding1989,bairChapterTenShear2019}. The shear bands that are mathematically obtained in this model are infinitesimally thin. To overcome this non-physical description, it is customary to penalize the elasticity operator by a fourth-order singular perturbation~\cite{franfortCombinedEffectsHomogenization1994}. Subsequently, it was suggested that the penalization should be followed by a homogenization procedure, which results in different regimes depending on the order of penalization as compared to the length-scale of the periodic heterogeneities. This was first carried out in~\cite{bensoussanAsymptoticAnalysisPeriodic2011,franfortCombinedEffectsHomogenization1994}. A quantitative analysis of this problem appeared in~\cite{niuConvergenceRateHomogenization2019,niuCombinedEffectsHomogenization2020,pastukhovaHomogenizationEstimatesSingularly2020}. The aim of the present work is to revisit this problem by employing the homogenization framework developed by Conca and Vanninathan~\cite{concaHomogenizationPeriodicStructures1997}. Bloch wave method in homogenization is a spectral method of homogenization. It also offers an alternative approximation scheme in the form of Bloch approximation~\cite{concaBlochApproximationHomogenization2002,concaBlochApproximationHomogenization2005} which provides sharp convergence estimates in homogenization under minimal regularity requirements. In this paper, we shall establish new characterizations of homogenized tensor for the singularly perturbed problem in terms of the first Bloch eigenvalue and use Bloch wave method to recover the homogenization result for singularly perturbed periodic homogenization problem.

\subsection{Notation and Definitions} We will study the simultaneous homogenization and singular perturbation limits of the following operator

\begin{align}\label{main}
	\mathcal{A}^{\kappa,\ve}\coloneqq\kappa^2\Delta^2 -\nabla\cdot A\left(\frac{x}{\ve}\right)\nabla,
\end{align} where $0<\kappa,\epsilon\ll 1$, and $A(y)=(a_{jk})_{j,k=1}^d$ is a matrix whose entries are real, bounded, measurable functions of $y\in\mathbb{R}^d$. Further, the matrix $A$ satisfies the following hypotheses:
\begin{itemize}
	\item[A1.] $A$ is elliptic, i.e., there exists $\alpha>0$ such that for all $\xi\in\mathbb{R}^d$ and a.e. $y\in\mathbb{R}^d$, $A(y)\xi\cdot\xi\geq \alpha|\xi|^2$.
	\item[A2.] $A$ is $Y$-periodic, i.e., $A(y+2\pi p)=A(y)$ for all $p\in\mathbb{Z}^d$, a.e. $y\in\mathbb{R}^d$. The set $Y\coloneqq [0,2\pi)^d$ is called a basic periodicity cell and may also be interpreted as a parametrization of the $d$-dimensional torus, $\mathbb{T}^d$. 
	\item[A3.] The matrix $A$ is symmetric.
\end{itemize}

We mention briefly the function spaces that make an appearance in this problem. The solutions of the cell problem associated with homogenization of~\cref{main}, as well as Bloch eigenfunctions, are sought in the space $H^2_\sharp(Y)$, which is the space of all periodic distributions $u$ for which the norm $||u||_{H^2}=\left(\sum_{n\in\mathbb{Z}^d}(1+|n|^2)^2|\hat{u}(n)|^2\right)^{\frac{1}{2}}$ is finite. The space $H^2_\sharp(Y)$ may be identified with $H^2(\mathbb{T}^d)$. The spaces $H^s_\sharp(Y)$ for $s\in\mathbb{R}$ are similarly defined. For $s>0$, $H^s_\sharp(Y)$ forms a subspace of the space of all periodic $L^2$ functions in $\mathbb{R}^d$, denoted by $L^2_\sharp(Y)$ or $L^2(\mathbb{T}^d)$. We shall denote the mean value or average of a periodic function $u$ on the basic periodicity cell $Y$ by $\mathcal{M}_Y(u)\coloneqq \frac{1}{|Y|}\int_Y u(y)\,dy$. Averaged integrals such as $\frac{1}{|Y|}\int_Y u(y)\,dy$ are sometimes denoted as $\fint_Y u(y)\,dy$.

\subsection{The Method of Bloch Wave Homogenization}

The method of Bloch waves rests on decomposition of a periodic operator in terms of Bloch waves which may be thought of as a periodic analogue of plane waves. As plane waves decompose a linear operator with constant coefficients by means of the Fourier transform, Bloch waves diagonalize a linear operator with periodic coefficients. This decomposition begins with a direct integral decomposition of a periodic operator $\mathfrak{A}$ in $\mathbb{R}^d$.
\begin{align*}
	\mathfrak{A}\to\int^\bigoplus_{\mathbb{T}^d}\mathfrak{A}(\eta)\,d\eta.
\end{align*} The fiber operator $\mathfrak{A}(\eta)$ has compact resolvent for each fixed $\eta\in\mathbb{T}^d$. The eigenfunctions viewed as functions of $\eta$ are called Bloch waves. Finally, the operator $\mathfrak{A}(\eta)$ is diagonalized by means of Bloch waves. 

The homogenization limits for a highly oscillating scalar periodic operator are obtained from its first Bloch mode. The rest of the Bloch modes do not contribute to the homogenization limit. This is a consequence of the separation of the first Bloch eigenvalue from the rest of the spectrum. Such an interpretation of homogenization is also called spectral threshold effect~\cite{birmanPeriodicSecondorderDifferential2003}. Moreover, the homogenized tensor is obtained from the Hessian of the first Bloch eigenvalue. Therefore, the second-order nature of the differential operator is reflected in the quadratic nature of the first Bloch eigenvalue near the bottom of the spectrum. Indeed, homogenization of higher-even-order periodic operators can also be obtained by the Bloch wave method, where the first Bloch eigenvalue behaves like a polynomial of the corresponding order near the bottom of the spectrum~\cite{veniaminovHomogenizationPeriodicDifferential2011,suslinaHomogenizationDirichletProblem2018}.

The homogenization result in~\cref{homog} exhibits three different regimes depending on the ratio of $\kappa$ and $\epsilon$, where $\kappa$ is to be interpreted as a function of $\epsilon$ satisfying $\lim_{\epsilon\to0}\kappa=0$.  Define $\rho\coloneqq \frac{\kappa}{\epsilon}$. The three different regimes correspond to 
\begin{itemize}
	\item $\lim_{\epsilon\to0}\rho=0$, 
	\item $0<\lim_{\epsilon\to0}\rho<\infty$, and 
	\item $\lim_{\epsilon\to0}\rho=\infty$.
\end{itemize}

The approach is to treat $\rho$ as a fixed number and obtain Bloch wave decomposition for the unscaled operator $\mathcal{A}^\rho=\rho^2\Delta^2-\nabla\cdot A(y)\nabla$. This decomposition is employed to obtain the homogenization limit as $\epsilon,\kappa\to 0$. While the qualitative homogenization result is not new, the presentation is original. Some novel features of the proof are characterization of homogenized tensor and cell functions in terms of derivatives of the first Bloch eigenvalue and eigenfunctions. This requires us to prove analyticity of the first Bloch eigenvalue and eigenfunction in a neighbourhood of zero in the dual parameter. Interestingly, the region of analyticity does not depend on the singular perturbation, which plays an important role in the simultaneous passage to $0$ of $\kappa$ and $\epsilon$. In order to study the stability of the homogenized tensor in the different regimes, we obtain uniform in $\rho$ estimates for Bloch eigenvalues, eigenfunctions and their derivatives of all orders in the dual parameter. We also prove that only the first Bloch mode contributes to homogenization and the higher modes are negligible. The analysis of three separate regimes provides considerable challenges in the Bloch wave method, particularly in obtaining uniform estimates in these regimes.

While the motivation for the problem~\cref{main} comes from the theory of elasticity, it is for the sake of simplicity that we only study the scalar operator. However, it must be noted that Bloch wave homogenization of systems carries some unique difficulties, such as the presence of multiplicity at the bottom of the spectrum. Indeed, these challenges have been surmounted by the use of directional analyticity of Bloch eigenvalues in~\cite{sivajiganeshBlochWaveHomogenization2005,birmanPeriodicSecondorderDifferential2003,allaireHomogenizationStokesSystem2017}. Further, the assumption of symmetry, while customary in elasticity, is made for a simplified presentation. A Bloch wave analysis of homogenization of non-selfadjoint operators may be found in~\cite{ganeshBlochWaveHomogenization2004}.

In a forthcoming work, we will obtain quantitative estimates for the combined effects of singular perturbation and homogenization through the notion of Bloch approximation, which was introduced in~\cite{concaBlochApproximationHomogenization2002}. Higher order estimates in homogenization have been obtained by these methods, particularly for the dispersive wave equation~\cite{dohnalBlochwaveHomogenizationLarge2014,dohnalDispersiveHomogenizedModels2015, allaireComparisonTwoscaleAsymptotic2016,lamacz-keymlingHighorderHomogenizationOptimal2020}.

\subsection{Plan of the Paper}

The plan of the paper is as follows: In~\cref{BlochWaveSingular}, we obtain Bloch waves for the singularly perturbed operator $\mathcal{A}^\rho$. In~\cref{regularityBloch}, we prove that the first Bloch eigenpair are analytic functions of the dual parameter in a neighbourhood of $0$. In~\cref{nbdanalytic}, we prove that the neighbourhood of analyticity is independent of $\rho$. In~\cref{cellproblem}, we recall the cell problem for the operator $\mathcal{A}^\rho$ and the estimates associated with it. In~\cref{BlochCharacterization}, we characterize the homogenized tensor in the three regimes by way of Bloch method. In~\cref{blochtransform}, we shall define the first Bloch transform and analyze its asymptotic properties. In~\cref{qualitative}, we obtain the qualitative homogenization theorem by means of the first Bloch transform, which is the periodic analogue of Fourier transform. Finally, in~\cref{highermodes}, we quantify the contribution of the higher Bloch transforms towards the homogenization limit.

\section{Bloch Waves for the Singularly Perturbed Operator}\label{BlochWaveSingular}
In this section, we will prove the existence of Bloch waves for the singular operator given by 
\begin{align}\label{singularop}
	\mathcal{A}^\rho\coloneqq \rho^2\Delta^2-\nabla\cdot A(y)\nabla.
\end{align}	Recall that $\rho$ was earlier set as $\frac{\kappa}{\epsilon}$, however in this section, $\rho$ will be assumed to be a fixed positive number. Bloch waves for~\cref{singularop} refers to eigenfunctions of~\cref{singularop} satisfying the so-called $(\eta-Y)$-periodicity condition, that is, we look for functions $\psi$ satisfying the following eigenvalue problem:
\begin{equation*}
	\begin{cases}
		&\rho^2\Delta^2\psi-\nabla\cdot A(y)\nabla\psi=\lambda\psi\nonumber\\
		&\psi(y+2\pi p)=e^{2\pi ip\cdot\eta}\psi(y), p\in\mathbb{Z}^d, \eta\in\mathbb{R}^d.
	\end{cases}
\end{equation*}
The above problem is invariant under $\mathbb{Z}^d$-shifts of $\eta$, hence it suffices to restrict $\eta$ to $Y^{'}\coloneqq \left[-\frac{1}{2},\frac{1}{2}\right)^d$. Now, if we set 
$\psi(y)=e^{iy\cdot\eta}\phi(y)$ where $\phi$ is a $Y$-periodic function, then the above eigenvalue problem is transformed into:
\begin{eqnarray}
	\label{BlochEigenvalueProblem}
	\begin{cases}
		&\mathcal{A}^\rho(\eta)\phi\coloneqq \rho^2(\nabla+i\eta)^4\phi-(\nabla+i\eta)\cdot A(y)(\nabla+i\eta)\phi=\lambda\phi\\
		&\phi(y+2\pi p)=\phi(y), p\in\mathbb{Z}^d,\eta\in Y^{'}.
	\end{cases}
\end{eqnarray}

The operator $\mathcal{A}^\rho(\eta)$ is often called the shifted operator associated with $\mathcal{A}^{\rho}$ where the shift $i\eta$ appears as a magnetic potential. In order to prove the existence of eigenvalues for~\eqref{BlochEigenvalueProblem}, we shall begin by proving that a zeroth-order perturbation of $\mathcal{A}^\rho(\eta)$ is elliptic on $H^2_\sharp(Y)$. This amounts to a G\aa rding type inequality for the operator $\mathcal{A}^\rho(\eta)$. Combined with Rellich compactness theorem, this will allow us to prove compactness of the inverse in $L^2_\sharp(Y)$. Then, a standard application of the spectral theorem for compact self-adjoint operators will guarantee the existence of eigenvalues for each fixed $\rho$ and $\eta$.

The bilinear form $a^\rho[\eta](\cdot,\cdot)$ defined on $H^2_\sharp(Y)\times H^2_\sharp(Y)$ by \begin{align}
	a^\rho[\eta](u,v)\coloneqq \int_Y A (\nabla+i\eta)u\cdot\overline{(\nabla+i\eta)v}\,dy+\rho^2\int_Y (\nabla+i\eta)^2 u\overline{(\nabla+i\eta)^2 v}\,dy,
\end{align} is associated to the operator $\mathcal{A}^\rho(\eta)$. We shall prove the following G\aa rding-type inequality for $a^\rho[\eta]$.

\begin{lemma}\label{coercivityfortranslate}
	There exists a positive real number $C_*$ not depending on $\eta$ but depending on $\rho$ such that for all $u\in H^2_\sharp(Y)$ and all $\eta\in Y^{'}$, we have	
	\begin{align}\label{coercivity}
		a^\rho[\eta](u,u)+C_*||u||^2_{L^2_\sharp(Y)}\geq \frac{\rho^2}{6}||\Delta u||^2_{L^2_\sharp(Y)}+\frac{\alpha}{2}||u||_{H^1_\sharp(Q)}. 
	\end{align}
\end{lemma}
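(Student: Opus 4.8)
The plan is to prove the inequality by expanding the bilinear form, bounding the mixed terms that arise from the magnetic shift by Young's inequality, and absorbing the negative contributions into the leading $\rho^2\|\Delta u\|^2$ and the ellipticity term $\alpha\|(\nabla+i\eta)u\|^2$. First I would write $a^\rho[\eta](u,u) = \int_Y A(\nabla+i\eta)u\cdot\overline{(\nabla+i\eta)u}\,dy + \rho^2\int_Y |(\nabla+i\eta)^2 u|^2\,dy$ and use A1 to get $\int_Y A(\nabla+i\eta)u\cdot\overline{(\nabla+i\eta)u}\,dy \ge \alpha\|(\nabla+i\eta)u\|_{L^2_\sharp(Y)}^2$. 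The key algebraic identity is $(\nabla+i\eta)^2 u = \Delta u + 2i\eta\cdot\nabla u - |\eta|^2 u$, so that $\|(\nabla+i\eta)^2 u\|^2 = \|\Delta u + 2i\eta\cdot\nabla u - |\eta|^2 u\|^2$. Using $\|a+b+c\|^2 \ge \frac{1}{2}\|a\|^2 - \|b\|^2 - \|c\|^2$ (or the sharper $\ge (1-\theta)\|a\|^2 - C_\theta(\|b\|^2+\|c\|^2)$), and noting $|\eta| \le \sqrt{d}/2$ on $Y'$, we obtain $\|(\nabla+i\eta)^2 u\|^2 \ge \frac{1}{3}\|\Delta u\|^2 - c_1\|\nabla u\|^2 - c_2\|u\|^2$ for explicit dimensional constants; arranging the Young's-inequality split to land exactly at the factor $\tfrac13$ (hence $\tfrac{\rho^2}{6}$ after also reserving half of the $\Delta u$ term, or directly $\tfrac13$ if one is more careful) gives the stated $\frac{\rho^2}{6}\|\Delta u\|^2$.

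Next I would handle the first-order term $\|\nabla u\|^2$ on the right of these estimates. This is where one must be slightly careful: the clean lower bound is in terms of $\|(\nabla+i\eta)u\|^2$, not $\|\nabla u\|^2$, but these differ only by lower-order terms since $(\nabla+i\eta)u = \nabla u + i\eta u$ gives $\|\nabla u\|^2 \le 2\|(\nabla+i\eta)u\|^2 + 2|\eta|^2\|u\|^2 \le 2\|(\nabla+i\eta)u\|^2 + \tfrac{d}{2}\|u\|^2$. Substituting, the leftover $-c_1\rho^2\|\nabla u\|^2$ term becomes $-2c_1\rho^2\|(\nabla+i\eta)u\|^2$ plus a multiple of $\|u\|^2$. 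To absorb the $\|(\nabla+i\eta)u\|^2$ piece into the ellipticity term I would instead interpolate: by the Gagliardo–Nirenberg / Ehrling inequality on the torus, $\|\nabla u\|^2 = \|(\nabla+i\eta)u - i\eta u\|^2 \le \delta\|\Delta u\|^2 + C_\delta\|u\|^2$ for any $\delta>0$ (using that $\|\nabla v\|^2 \le \|v\|\,\|\Delta v\|$ for periodic $v$, valid after shifting, or directly via Fourier series), so choosing $\delta$ small relative to $1/\rho^2$ lets the $\rho^2\|\nabla u\|^2$ term be swallowed by an arbitrarily small fraction of $\rho^2\|\Delta u\|^2$ at the cost of enlarging $C_*$. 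The $\alpha\|(\nabla+i\eta)u\|^2 \ge \frac{\alpha}{2}\|(\nabla+i\eta)u\|^2 + \frac{\alpha}{2}\|(\nabla+i\eta)u\|^2$ split then supplies the $\frac{\alpha}{2}\|u\|_{H^1_\sharp(Y)}$ term directly, since $\|u\|_{H^1_\sharp}^2 \sim \|(\nabla+i\eta)u\|^2 + \|u\|^2$ up to constants absorbed into $C_*$ (I note the statement's right side appears to be missing a square and writes $Q$ for $Y$, presumably a typo).

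The main obstacle is bookkeeping rather than conceptual: one must choose the Young's-inequality parameters so that (i) exactly $\tfrac{\rho^2}{6}$ of the fourth-order term survives, (ii) the first-order remainder is absorbed using $\rho$-dependent interpolation (which is why $C_*$ is allowed to depend on $\rho$ — a uniform-in-$\rho$ bound would fail here, consistent with the later sections needing separate uniform estimates), and (iii) enough of the $\alpha$-ellipticity remains to produce the $H^1$ term. All constants are explicit and dimensional; since $\eta$ ranges over the bounded set $Y'$, every $|\eta|$-dependent constant is bounded uniformly in $\eta$, which is the only place the restriction $\eta\in Y'$ is used. No compactness or spectral input is needed for this lemma — it is purely a Gårding inequality — so the proof is a self-contained computation with the scheme above.
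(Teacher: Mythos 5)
Your proposal is correct and follows essentially the same route as the paper: expand the shifted forms, use ellipticity for the second-order part, control the cross terms of $|(\nabla+i\eta)^2u|^2$ by Cauchy--Schwarz/Young, and absorb the leftover $\rho^2\|\nabla u\|^2$ via the periodic interpolation $\|\nabla u\|^2=-\int_Y u\,\overline{\Delta u}\,dy\le \delta\|\Delta u\|^2+C_\delta\|u\|^2$, which is exactly the paper's step \cref{gradtodelta}, ending with a constant $C_*$ that carries a $\rho^2$ contribution just as in \cref{cstar}. The only differences are bookkeeping (the paper expands the six terms explicitly and fixes numerical constants), and your observations about the typos ($\|u\|_{H^1}$ missing a square, $Q$ for $Y$) are accurate.
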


\begin{proof}
  We have 
  \begin{align}
    a^\rho[\eta](u,u) = \underbrace{\int_Y A (\nabla+i\eta)u\cdot\overline{(\nabla+i\eta)u}\,dy}_\text{I}+\underbrace{\rho^2\int_Y (\nabla+i\eta)^2 u\overline{(\nabla+i\eta)^2 u}\,dy}_\text{II}.
  \end{align} We shall estimate the two summands separately.
  For the first summand,
  Observe that 
	\begin{align}\label{termI}
		I&=\int_Y A (\nabla+i\eta)u\cdot\overline{(\nabla+i\eta)u}\,dy\nonumber\\
		&= \int_Y A\nabla u\cdot\overline{\nabla u}\,dy+2\,\text{Re}\left\{\int_Y A i\eta u\cdot\overline{\nabla u}\,dy\right\}+\int_Y A \eta u\cdot\eta\overline{u}\,dy,
	\end{align} where $\text{Re}$ denotes the real part.
	Now we shall estimate each term on the RHS above. The first term of $I$ is estimated as follows:
	\begin{align}
		\int_Y A\nabla u\cdot\nabla\overline{u}\,dy &= \int_Y A\nabla u\cdot\nabla \overline{u}\,dy\nonumber\\
		&\geq\alpha\int_Y|\nabla u|^2\,dy.
	\end{align}
	For the second term of $I$, observe that
	\begin{align}
		\left|2\,\text{Re}\left\{\int_Y A \eta u\cdot\nabla\overline{u}\,dy \right\}\right|&\leq 2\int_Y|A\eta u\cdot\nabla\overline{u}|\,dy\nonumber\\
		&\leq C_1\int_Y|\eta u\cdot\nabla\overline{u}|\,dy\nonumber\\
		&\leq C_1||\eta u||_{L^2_\sharp(Y)}||\nabla u||_{L^2_\sharp(Y)}\nonumber\\
		&\leq C_1C_2||u||^2_{L^2_\sharp(Y)}+\frac{C_1}{C_2}||\nabla{u}||_{L^2_\sharp(Y)}.
		\end{align}
	Finally, the third term of $I$ is dominated by $L^2_\sharp(Y)$ norm of $u$ as follows:
	\begin{align}
		\left|\int_Y A \eta u\cdot\eta\overline{u}\,dy\right|\leq C_3\int_{Y}|\eta u\cdot\eta \overline{u}|\,dy\leq C_4||u||_{L^2_\sharp(Y)}.
	\end{align}
	Now, we may choose $C_2$ so that $\frac{C_1}{C_2}=\frac{\alpha}{2}$, then
	\begin{align}\label{firstsummand}
		I\geq \frac{\alpha}{2}||u||_{L^2_\sharp(Y)}+\frac{\alpha}{2}||\nabla u||_{L^2_\sharp(Y)}-\left(\frac{\alpha}{2}+C_1C_2+C_4\right)||u||^2_{L^2_\sharp(Y)}.
  \end{align}
  For the second summand, observe that
  \begin{align}\label{secondsummand}
	II = \rho^2 \int_Y |(\nabla+i\eta)^2u|^2\,dy\nonumber\\
	\begin{split} = \rho^2 \int_Y |\Delta u|^2\,dy & + \rho^2 \int_Y |\eta|^4|u|^2\,dy + 4\rho^2\int_Y |\eta\cdot\nabla u|^2\,dy + 2\rho^2 i \text{Im}\left\{\int_Y  |\eta|^2 u\overline{\Delta u}\,dy\right\}\\
	& + 4i\rho^2\text{Re}\left\{\int_Y (\eta\cdot\nabla u)\Delta \overline{u}\,dy \right\}+4i\rho^2\text{Re}\left\{\int_Y |\eta|^2u(\eta\cdot\nabla\overline{u})\,dy \right\}
	\end{split}.
\end{align}
We estimate the last three terms as follows.
\begin{align}\label{secsumtermone}
	\rho^2\left|2i\text{Im}\left\{\int_Y  |\eta|^2 u\overline{\Delta u}\,dy\right\}\right|&\leq 2\rho^2\int_Y|\eta|^2|u||\Delta u|\,dy\nonumber\\
	&\leq 2\rho^2|\eta|^2||u||_{L^2}||\Delta u||_{L^2}\nonumber\\
	&\leq 48\rho^2|\eta|^4||u||^2_{L^2}+\frac{\rho^2}{48}||\Delta u||^2_{L^2}.
\end{align}
\begin{align}\label{secsumtermtwo}
	\rho^2\left|4i\text{Re}\left\{\int_Y(\eta\cdot\nabla)u\Delta\overline{u}\,dy\right\}\right|&\leq 4\rho^2\int_Y|(\eta\cdot\nabla)u||\Delta u|\,dy\nonumber\\
	&\leq 4\rho^2 ||(\eta\cdot\nabla)u||_{L^2}||\Delta u||_{L^2}\nonumber\\
	&\leq \frac{16\rho^2}{3}||(\eta\cdot\nabla)u||^2_{L^2}+\frac{3\rho^2}{4}||\Delta u||^2_{L^2}.
\end{align}
\begin{align}\label{secsumtermthree}
	\rho^2\left|4i\text{Re}\left\{\int_Y|\eta|^2u(\eta\cdot\nabla)\overline{u}\,dy\right\}\right|&\leq 4\rho^2\int_Y|(\eta\cdot\nabla)u||\eta|^2|u|\,dy\nonumber\\
	&\leq 4\rho^2|\eta|^2||u||_{L^2}||(\eta\cdot\nabla)u||_{L^2}\nonumber\\
	&\leq 192\rho^2|\eta|^4||u||^2_{L^2}+\frac{\rho^2}{48}||(\eta\cdot\nabla)u||^2_{L^2}.
\end{align}
The previous threee estimate use Cauchy-Schwarz inequality for the second step and Young's inequality for the third step. Substituting the inequalities~\cref{secsumtermone},~\cref{secsumtermtwo} and~\cref{secsumtermthree} into~\cref{secondsummand}, we get
\begin{align}\label{secsuminter}
II\geq \frac{11\rho^2}{48} ||\Delta u||^2_{L^2(Y)} & -240 \rho^2 |\eta|^4||u||^2_{L^2(Y)} - \frac{65\rho^2}{48}||(\eta\cdot\nabla)u||^2_{L^2(Y)}.
\end{align}
Since $|\eta|\leq\frac{1}{2}$, we get
\begin{align*}
	II\geq \frac{11\rho^2}{48} ||\Delta u||^2_{L^2(Y)} & -15 \rho^2||u||^2_{L^2(Y)} - \frac{65\rho^2}{192}||\nabla u||^2_{L^2(Y)}.
	\end{align*}
	Now, notice that 
	\begin{align}\label{gradtodelta}
		||\nabla u||^2_{L^2}=\int_Y|\nabla u|^2\,dy=-\int_Y u\Delta u\,dy\leq \frac{65}{48}||u||^2_{L^2}+\frac{12}{65}||\Delta u||^2_{L^2}.
	\end{align}
	Substituting~\cref{gradtodelta} in~\cref{secsuminter}, we get
	\begin{align}\label{secsuminter2}
		II\geq \frac{\rho^2}{6} ||\Delta u||^2_{L^2(Y)} & -16 \rho^2||u||^2_{L^2(Y)}.
		\end{align}
		Combining~\cref{firstsummand} and~\cref{secsuminter2}, we obtain~\cref{coercivity} with \begin{align}\label{cstar}C_*=\left(\frac{\alpha}{2}+C_1C_2+C_4+16\rho^2\right).\end{align}
\end{proof}

\begin{remark}
	In the G\aa rding type inequality for the operator $\mathcal{A}^\rho(\eta)$, the perturbation $C_*I$ in zeroth term depends on the parameter $\rho$. It is possible to avoid the dependence of $C_*$ on $\rho$ by forgoing the shift $i\eta$ in the biharmonic term. This simplification was employed in~\cite{sivajiganeshBlochWaveHomogenisation2020}.
\end{remark}

Now that we have the coercivity estimate~\cref{coercivity}, we can prove the existence of Bloch eigenvalues and eigenfunctions for the operator $\mathcal{A}^\rho$.

\begin{theorem}\label{existenceofBlocheig}
	For each $\eta\in Y^{'}$ and $\rho>0$, the singularly perturbed Bloch eigenvalue problem~\eqref{BlochEigenvalueProblem} admits a countable sequence of eigenvalues and corresponding eigenfunctions in the space $H^2_\sharp(Q)$.
\end{theorem}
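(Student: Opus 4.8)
The plan is the standard variational argument: turn the G\aa rding inequality of \cref{coercivityfortranslate} into invertibility of a shifted operator, exploit compactness of the Sobolev embedding on the torus to obtain a compact self-adjoint resolvent, and then invoke the spectral theorem for compact self-adjoint operators. Throughout, $\rho$ and $\eta\in Y'$ are fixed (the cell denoted $Q$ in the statement is the periodicity cell $Y$).

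First I would record two preliminary facts about the sesquilinear form $a^\rho[\eta](\cdot,\cdot)$ on $H^2_\sharp(Y)\times H^2_\sharp(Y)$. \emph{Boundedness:} expanding $(\nabla+i\eta)$ and $(\nabla+i\eta)^2$, every resulting term is a product of derivatives of $u$ and $v$ of order at most two with an $L^\infty$ coefficient (entries of $A$, or powers of $|\eta|\le\tfrac12$), hence $|a^\rho[\eta](u,v)|\le C(\rho,\|A\|_\infty,d)\,\|u\|_{H^2_\sharp(Y)}\|v\|_{H^2_\sharp(Y)}$. \emph{Hermitian symmetry:} integrating by parts and using A3 (symmetry of $A$) one checks $a^\rho[\eta](u,v)=\overline{a^\rho[\eta](v,u)}$; this is the only place the symmetry hypothesis is used. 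Now set $\tilde a^\rho[\eta](u,v):=a^\rho[\eta](u,v)+C_*\langle u,v\rangle_{L^2_\sharp(Y)}$ with $C_*$ as in \cref{cstar}. By \cref{coercivity}, the right-hand side dominates $c(\rho)\bigl(\|\Delta u\|_{L^2_\sharp(Y)}^2+\|u\|_{L^2_\sharp(Y)}^2\bigr)$, which on $\mathbb{T}^d$ is equivalent to $\|u\|_{H^2_\sharp(Y)}^2$ (by Fourier series, $\sum_n|n_jn_k|^2|\hat u(n)|^2\le\sum_n|n|^4|\hat u(n)|^2$ and $(1+|n|^2)^2\lesssim 1+|n|^4$); therefore $\tilde a^\rho[\eta]$ is coercive on $H^2_\sharp(Y)$.

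Next, for $f\in L^2_\sharp(Y)$ the Lax--Milgram theorem applied to $\tilde a^\rho[\eta]$ produces a unique $T_\eta f\in H^2_\sharp(Y)$ with $\tilde a^\rho[\eta](T_\eta f,v)=\langle f,v\rangle_{L^2_\sharp(Y)}$ for all $v\in H^2_\sharp(Y)$ and $\|T_\eta f\|_{H^2_\sharp(Y)}\le C\|f\|_{L^2_\sharp(Y)}$. Composing $T_\eta$ with the compact embedding $H^2_\sharp(Y)\hookrightarrow L^2_\sharp(Y)$ (Rellich on $\mathbb{T}^d$) gives a compact operator $S_\eta:L^2_\sharp(Y)\to L^2_\sharp(Y)$; Hermitian symmetry and coercivity of $\tilde a^\rho[\eta]$ make $S_\eta$ self-adjoint and positive definite. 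The spectral theorem then yields an orthonormal basis $\{\phi_m\}_{m\ge1}$ of $L^2_\sharp(Y)$ of eigenfunctions of $S_\eta$ with eigenvalues $\mu_m>0$, $\mu_m\to0$. Unwinding the construction, each $\phi_m$ lies in $H^2_\sharp(Y)$ and solves $\mathcal{A}^\rho(\eta)\phi_m=\lambda_m\phi_m$ in the weak (hence distributional) sense with $\lambda_m=\mu_m^{-1}-C_*\to+\infty$; passing through the substitution $\psi=e^{iy\cdot\eta}\phi$ recovers a countable sequence of solutions of the original $(\eta$-$Y)$-periodic problem. Running this for every $\eta\in Y'$ completes the proof.

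There is no serious obstacle here — it is the textbook compact-resolvent scheme — so the write-up is mostly a matter of bookkeeping. The only points that genuinely need care are the Hermitian symmetry of $a^\rho[\eta]$ in the presence of the magnetic shift $i\eta$ (handled by A3 together with periodic integration by parts) and the equivalence, on the torus, between the quantity on the right of \cref{coercivity} and the full $H^2_\sharp(Y)$ norm; both are elementary via Fourier series and the identity $\sum_j\|\partial_j^2u\|_{L^2}^2\le\|\Delta u\|_{L^2}^2$ for periodic $u$.
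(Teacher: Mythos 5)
Your argument is correct and follows essentially the same route as the paper: the G\aa rding inequality of \cref{coercivityfortranslate} makes the shifted form coercive, Lax--Milgram (elliptic solvability) gives a bounded solution operator $L^2_\sharp(Y)\to H^2_\sharp(Y)$, Rellich compactness and self-adjointness yield a compact self-adjoint resolvent, and the spectral theorem produces the eigenvalue sequence, shifted back by $C_*$. The extra details you supply (boundedness and Hermitian symmetry of the form, equivalence of the coercivity quantity with the full $H^2_\sharp(Y)$ norm via Fourier series) are exactly the bookkeeping the paper leaves implicit.
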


\begin{proof}
	\cref{coercivityfortranslate} shows that for every $\eta\in Y^{'}$ the operator $\mathcal{A}^\rho(\eta)+C_*I$ is elliptic on $H^2_\sharp(Y)$. Hence, for $f\in L^2_\sharp(Y)$, this shows that $\mathcal{A}^\rho(\eta)u+C_*u=f$ is solvable and the solution is in $H^2_\sharp(Y)$. As a result, the solution operator $S^{\rho}(\eta)$ is continuous from $L^2_\sharp(Y)$ to $H^2_\sharp(Y)$. Since the space $H^2_\sharp(Y)$ is compactly embedded in $L^2_\sharp(Y)$, $S^\rho(\eta)$ is a self-adjoint compact operator on $L^2_\sharp(Y)$. Therefore, by an application of the spectral theorem for self-adjoint compact operators, for every $\eta\in Y^{'}$ we obtain an increasing sequence of eigenvalues of $\mathcal{A}^\rho(\eta)+C_*I$ and the corresponding eigenfunctions form an orthonormal basis of $L^2_\sharp(Y)$. However, note that both the operators $\mathcal{A}^\rho(\eta)$ and $\mathcal{A}^\rho(\eta)+C_*I$ have the same eigenfunctions but each eigenvalue of the two operators differ by $C_*$. We shall denote the eigenvalues and eigenfunctions of the operator $\mathcal{A}^\rho(\eta)$ by $\eta\to(\lambda^\rho_m(\eta),\phi^\rho_m(\cdot,\eta))$.
\end{proof}

\begin{remark}
	We can prove the existence of Bloch eigenvalues and eigenfunctions for the case $\rho=0$ by a similar method. This corresponds to the standard Bloch eigenvalue problem considered in~\cite{concaHomogenizationPeriodicStructures1997}.
\end{remark}

\subsection{Bloch Decomposition of $L^2(\mathbb{R}^d)$} Now that we have proved the existence of Bloch eigenvalues and eigenfunctions, we can state the Bloch Decomposition Theorem which offers a partial diagonalization of the operator $\mathcal{A}^\rho$ in terms of its Bloch eigenvalues. This is facilitated by the Bloch transform which is a mapping from $L^2(\mathbb{R}^d)$ to $\ell^2(\mathbb{N};L^2(Y^{'}))$. The proof is similar to the one in~\cite{bensoussanAsymptoticAnalysisPeriodic2011} and is therefore omitted. Note that the proof relies on a measurable selection of the Bloch eigenfunctions with respect to $\eta$. A measurable selection of Bloch eigenfunctions for the Schr\"odinger operator was first demonstrated in~\cite{wilcoxTheoryBlochWaves1978a}. In contrast, the Bloch eigenvalues are Lipschitz continuous in the dual parameter $\eta$. We will prove this fact in~\cref{nbdanalytic}.

\begin{theorem}\label{BlochDecomposition} 
	Let $\rho >0$. Let $g\in L^2(\mathbb{R}^d)$. Define the $m^{th}$ Bloch coefficient of $g$ as 
	
	\begin{align}\label{BlochTransform}
	\mathcal{B}^{\rho}_mg(\eta)\coloneqq\int_{\mathbb{R}^d}g(y)e^{-iy\cdot\eta}\overline{\phi_m^{\rho}(y;\eta)}\,dy,~m\in\mathbb{N},~\eta\in Y^{'}.
	\end{align}
	
	\begin{enumerate}
		\item  The following inverse formula holds
		\begin{align}\label{Blochinverse}
		g(y)=\int_{Y^{'}}\sum_{m=1}^{\infty}\mathcal{B}^{\rho}_mg(\eta)\phi_m^{\rho}(y;\eta)e^{iy\cdot\eta}\,d\eta.
		\end{align}
		\item{\bf Parseval's identity} 
		\begin{align}\label{parsevalbloch}
		||g||^2_{L^2(\mathbb{R}^d)}=\sum_{m=1}^{\infty}\int_{Y^{'}}|\mathcal{B}^{\rho}_mg(\eta)|^2\,d\eta.
		\end{align}
		\item{\bf Plancherel formula} For $f,g\in L^2(\mathbb{R}^d)$, we have
		\begin{align}\label{Plancherel}
		\int_{\mathbb{R}^d}f(y)\overline{g(y)}\,dy=\sum_{m=1}^{\infty}\int_{Y^{'}}\mathcal{B}^{\rho}_mf(\eta)\overline{\mathcal{B}^{\rho}_mg(\eta)}\,d\eta.
		\end{align}
		\item{\bf Bloch Decomposition in $H^{-1}(\mathbb{R}^d)$} For an element $F=u_0(y)+\sum_{j=1}^N\frac{\partial u_j(y)}{\partial y_j}$ of $H^{-1}(\mathbb{R}^d)$, the following limit exists in $L^2(Y^{'})$:
		\begin{align}\label{BlochTransform2}
		\mathcal{B}^{\rho}_mF(\eta)=\int_{\mathbb{R}^d}e^{-iy\cdot\eta}\left\{u_0(y)\overline{\phi^{\rho}_m(y;\eta)}+i\sum_{j=1}^N\eta_ju_j(y)\overline{\phi^{\rho}_m(y;\eta)}\right\}\,dy\nonumber\\-\int_{\mathbb{R}^d}e^{-iy\cdot\eta}\sum_{j=1}^Nu_j(y)\frac{\partial\overline{\phi^{\rho}_m}}{\partial y_j}(y;\eta)\,dy.
		\end{align}
		\item[] The definition above is independent of the particular representative of $F$. 
		\item Finally, for $g\in D(\mathcal{A}^{\rho})$,
		\begin{align}
		\label{diagonalization}
		\mathcal{B}^{\rho}_m(\mathcal{A}^{\rho}g)(\eta)=\lambda^{\rho}_m(\eta)\mathcal{B}^{\rho}_mg(\eta).\end{align}
	\end{enumerate}\qed
\end{theorem}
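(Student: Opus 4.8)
The plan is to realize the family $\{\mathcal{A}^\rho(\eta)\}_{\eta\in Y'}$ as the fiber decomposition of $\mathcal{A}^\rho$ under the Bloch (Gelfand--Floquet) transform and then transport the fiberwise spectral expansions. First I would introduce the unitary map
\[
U\colon L^2(\mathbb{R}^d) \to L^2\big(Y'; L^2_\sharp(Y)\big), \qquad (Ug)(\eta, y) = \sum_{p\in\mathbb{Z}^d} g(y+2\pi p)\, e^{-i(y+2\pi p)\cdot \eta},
\]
defined first on $C_c^\infty(\mathbb{R}^d)$ and extended by density, with unitarity following from Parseval/Poisson summation. One then checks the intertwining identity $U\mathcal{A}^\rho U^{-1} = \int^{\oplus}_{Y'} \mathcal{A}^\rho(\eta)\,d\eta$: this holds because $\mathcal{A}^\rho$ commutes with $2\pi\mathbb{Z}^d$-translations (the coefficient $A$ being $Y$-periodic) and because conjugating $\nabla$ by $e^{-iy\cdot\eta}$ replaces it with $\nabla+i\eta$, producing precisely the shifted operator $\mathcal{A}^\rho(\eta)$ of \eqref{BlochEigenvalueProblem}. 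Some care is needed with domains, namely that $U$ carries $D(\mathcal{A}^\rho)$ onto $\int^{\oplus} D(\mathcal{A}^\rho(\eta))\,d\eta$, which rests on the ellipticity estimate \eqref{coercivity}.

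The second ingredient is a measurable selection $\eta \mapsto (\lambda^\rho_m(\eta), \phi^\rho_m(\cdot;\eta))$ of the eigenpairs furnished by \cref{existenceofBlocheig}, arranged so that for a.e.\ $\eta$ the system $\{\phi^\rho_m(\cdot;\eta)\}_{m\geq 1}$ is an orthonormal basis of $L^2_\sharp(Y)$; here one invokes von Neumann's measurable-selection theorem in the form established for the Schr\"odinger operator by Wilcox~\cite{wilcoxTheoryBlochWaves1978a}. Granting this, for $g\in L^2(\mathbb{R}^d)$ one expands the fiber $(Ug)(\eta,\cdot)\in L^2_\sharp(Y)$ in this basis; unwinding the definitions of $U$ and \eqref{BlochTransform} identifies the $m$-th coefficient with $\mathcal{B}^\rho_m g(\eta)$. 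This at once yields the inversion formula \eqref{Blochinverse}, and the global Parseval identity \eqref{parsevalbloch} follows from unitarity of $U$ together with the fiberwise Parseval identity; the Plancherel formula \eqref{Plancherel} is then obtained by polarization. For the diagonalization \eqref{diagonalization}, one uses that for $g \in D(\mathcal{A}^\rho)$ one has $U(\mathcal{A}^\rho g)(\eta,\cdot) = \mathcal{A}^\rho(\eta)\,(Ug)(\eta,\cdot)$; taking the $L^2_\sharp(Y)$ inner product with $\phi^\rho_m(\cdot;\eta)$ and using self-adjointness of $\mathcal{A}^\rho(\eta)$ together with $\mathcal{A}^\rho(\eta)\phi^\rho_m(\cdot;\eta) = \lambda^\rho_m(\eta)\phi^\rho_m(\cdot;\eta)$ gives $\mathcal{B}^\rho_m(\mathcal{A}^\rho g)(\eta) = \lambda^\rho_m(\eta)\,\mathcal{B}^\rho_m g(\eta)$.

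For the $H^{-1}(\mathbb{R}^d)$ statement, I would note that since $\phi^\rho_m(\cdot;\eta)\in H^2_\sharp(Y)$, the function $y \mapsto e^{iy\cdot\eta}\phi^\rho_m(y;\eta)$ is locally in $H^2$, so for $F = u_0 + \sum_{j=1}^N \partial_{y_j} u_j$ with $u_j \in L^2(\mathbb{R}^d)$ the right-hand side of \eqref{BlochTransform2} is simply the $H^{-1}$--$H^1_{\mathrm{loc}}$ duality pairing of $F$ with $e^{-iy\cdot\eta}\overline{\phi^\rho_m(y;\eta)}$, obtained formally by integration by parts. Approximating $F$ in $H^{-1}$ by functions in $C_c^\infty(\mathbb{R}^d)$ and using the boundedness properties of $\mathcal{B}^\rho_m$ already established shows that the defining limit exists in $L^2(Y')$; independence of the chosen representative of $F$ is the same integration-by-parts identity, since two representatives differ by a field with vanishing distributional divergence, whose pairing with a function is zero. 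I expect the measurable-selection step, and the domain bookkeeping for the fourth-order fiber operator $\mathcal{A}^\rho(\eta)$, to be the only genuinely delicate points; the remainder is a transcription of the classical second-order argument, which is why the authors defer the details to \cite{bensoussanAsymptoticAnalysisPeriodic2011}.
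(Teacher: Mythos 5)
Your proposal is correct and follows essentially the route the paper intends: the paper omits the proof, deferring to the classical Gelfand--Floquet direct-integral argument of \cite{bensoussanAsymptoticAnalysisPeriodic2011} together with the measurable selection of Bloch eigenfunctions in the spirit of \cite{wilcoxTheoryBlochWaves1978a}, which is precisely what you reconstruct (unitary fiber decomposition, fiberwise expansion in the eigenbasis from \cref{existenceofBlocheig}, Parseval/Plancherel by unitarity and polarization, diagonalization via the intertwining relation, and the $H^{-1}$ extension by density). The delicate points you flag --- measurable selection and the domain bookkeeping for the fourth-order fiber operator resting on \cref{coercivity} --- are exactly the ones the paper singles out before citing the classical reference.
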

	
\section{Regularity of the Ground State}\label{regularityBloch}
The Bloch wave method of homogenization requires differentiability of the Bloch eigenvalues and eigenfunctions in a neighbourhood of $\eta=0$. In this section, we will prove the following theorem. As before, $\rho>0$ will be treated as a fixed number.

\begin{theorem}\label{analytic}
	For every $\rho>0$, there exists $\delta_\rho>0$ and a ball $U^\rho \coloneqq B_{\delta_{\rho}}(0) \coloneqq\{\eta\in Y^{'}:|\eta|<\delta_{\rho}\}$ such that
	\begin{enumerate}
		\item The first Bloch eigenvalue $\eta\to \lambda^\rho(\eta)$ of $\mathcal{A}^\rho$ is analytic for $\eta\in U^\rho$.
		\item There is a choice of corresponding eigenfunctions $\phi^\rho_1(\cdot,\eta)$ such that $\eta\in U^\rho\to \phi_1^\rho(\cdot,\eta)\in H^2_\sharp(Y)$ is analytic.
	\end{enumerate}
\end{theorem}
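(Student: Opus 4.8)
The proof will proceed by Kato--Rellich analytic perturbation theory, treating the family $\eta\mapsto\mathcal{A}^\rho(\eta)$ as a holomorphic family of type (B) in the sense of Kato. The plan is as follows. First I would verify that the sesquilinear form $a^\rho[\eta](\cdot,\cdot)$ extends to an \emph{entire} holomorphic family in the complex variable $\eta\in\mathbb{C}^d$: indeed, expanding $(\nabla+i\eta)$ and $(\nabla+i\eta)^2$, the form is a polynomial in $\eta$ with coefficients that are bounded sesquilinear forms on $H^2_\sharp(Y)\times H^2_\sharp(Y)$ (the leading term $\rho^2\int_Y \Delta u\,\overline{\Delta v}$ is $\eta$-independent, and all $\eta$-dependent corrections involve at most two derivatives, hence are controlled by the $H^2$ norm). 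Together with the G\aa{}rding inequality of \cref{coercivityfortranslate}, which shows $a^\rho[\eta](u,u)+C_*\|u\|_{L^2}^2$ is coercive on $H^2_\sharp(Y)$ uniformly for real $\eta\in Y'$, this identifies $\mathcal{A}^\rho(\eta)+C_*I$ as a self-adjoint holomorphic family of type (B) for real $\eta$, with a holomorphic extension to a complex neighbourhood (the coercivity survives small complex perturbations since the imaginary parts of the added terms are form-bounded with small relative bound after shrinking the neighbourhood).

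Next I would invoke the fact, established in \cref{existenceofBlocheig}, that for $\eta=0$ the operator $\mathcal{A}^\rho(0)=\rho^2\Delta^2-\nabla\cdot A(y)\nabla$ has a discrete spectrum; its first eigenvalue is $\lambda^\rho(0)=0$, attained by the constant eigenfunction $\phi^\rho_1(\cdot,0)\equiv |Y|^{-1/2}$. The key point is that $\lambda^\rho(0)=0$ is a \emph{simple} eigenvalue and is isolated from the rest of the spectrum: simplicity follows because $\mathcal{A}^\rho(0)u=0$ forces $\int_Y A\nabla u\cdot\overline{\nabla u}+\rho^2\int_Y|\Delta u|^2=0$, hence $\nabla u=0$ by ellipticity (A1), so $u$ is constant; and the spectral gap $\lambda^\rho_2(0)-\lambda^\rho_1(0)>0$ is strictly positive by discreteness. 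With a simple isolated eigenvalue at $\eta=0$, standard analytic perturbation theory (Kato, \emph{Perturbation Theory for Linear Operators}, Ch.~VII, Thm.~1.8 and Thm.~3.9; or the Rellich theorem) gives a radius $\delta_\rho>0$ such that on $U^\rho=B_{\delta_\rho}(0)$ there is exactly one eigenvalue $\lambda^\rho(\eta)$ near $0$, it depends holomorphically on $\eta$, and an associated eigenvector can be chosen holomorphically in $H^2_\sharp(Y)$ (via the Riesz spectral projection $P^\rho(\eta)=\frac{1}{2\pi i}\oint_\Gamma (z-\mathcal{A}^\rho(\eta))^{-1}\,dz$ around a fixed small contour $\Gamma$ enclosing only $\lambda^\rho(0)$; $P^\rho(\eta)$ is holomorphic because the resolvent is, and applying it to the constant function and normalizing yields the desired holomorphic eigenfunction). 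Restricting to real $\eta$ recovers the real-analyticity claimed in the statement.

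The main obstacle — and the reason some care is needed — is that $\mathcal{A}^\rho(\eta)$ is a family of \emph{unbounded} operators with $\eta$-dependent lower-order structure, so one cannot apply bounded-perturbation theory directly; the correct framework is holomorphic families of type (B), which requires checking that the form domain $H^2_\sharp(Y)$ is $\eta$-independent (true here), that the forms are sectorial/bounded-below uniformly (the content of \cref{coercivityfortranslate}), and that $\eta\mapsto a^\rho[\eta](u,v)$ is holomorphic for each fixed $u,v$ (immediate from the polynomial dependence). Once type (B) is verified, the resolvent $(z-\mathcal{A}^\rho(\eta))^{-1}$ is jointly holomorphic in $(z,\eta)$ away from the spectrum, and everything else is the standard Riesz-projection argument. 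A secondary point requiring attention is the normalization: the naive normalization $\|\phi^\rho_1(\cdot,\eta)\|_{L^2}=1$ only determines the eigenfunction up to a phase and is not holomorphic; instead one fixes the holomorphic representative by the condition $\langle \phi^\rho_1(\cdot,\eta),\phi^\rho_1(\cdot,0)\rangle = \text{const}$ (equivalently, $P^\rho(\eta)$ applied to the constant, suitably scaled), which is the choice that makes $\eta\mapsto\phi^\rho_1(\cdot,\eta)$ analytic — this is the choice implicitly used in \cref{concaHomogenizationPeriodicStructures1997} and is what later sections (e.g.\ the computation of derivatives of $\lambda^\rho$ at $\eta=0$) will rely on. Note that the value of $\delta_\rho$ produced here depends on $\rho$ through the spectral gap and through $C_*$; making it independent of $\rho$ is deferred to \cref{nbdanalytic}.
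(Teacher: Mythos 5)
Your proposal follows essentially the same route as the paper: complexify the form $a^\rho[\eta]$, verify it is a (sectorial, closed, holomorphic) family of type (a) so that $\mathcal{A}^\rho(\eta)$ is a self-adjoint holomorphic family of type (B), check that $0$ is an isolated simple eigenvalue of $\mathcal{A}^\rho(0)$ using the G\aa rding inequality of \cref{coercivityfortranslate} and ellipticity, and then apply the Kato--Rellich theorem. The only differences are matters of detail — the paper carries out the sectoriality estimate for complex $\tilde\eta$ explicitly and verifies algebraic simplicity separately (which in your self-adjoint setting follows from geometric simplicity, as you implicitly use) — so your argument is correct and not a genuinely different approach.
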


For the proof, we will make use of Kato-Rellich theorem which establishes the existence of a sequence of eigenvalues and eigenfunctions associated with a selfadjoint holomorphic family of type (B). The definition of selfadjoint holomorphic family of type (B) and other related notions may be found in Kato~\cite{katoPerturbationTheoryLinear1995}. Nevertheless, they are stated below for completeness. We begin with the definition of a holomorphic family of forms of type (a).
\begin{definition}
	\leavevmode
	\begin{enumerate}
		\item 	The numerical range of a form $a$ is defined as $\Theta(a) = \{a(u,u): u \in D(a), ||u|| = 1\}$, where $D(a)$ denotes the domain of the form $a$. Here, $D(a)$ is a subspace of a Hilbert space $H$.
		\item The form $a$ is called {\bf sectorial} if there are numbers $c\in \mathbb{R}$ and $\theta\in [0, \pi/2)$ such that
		$$\Theta(a) \subset S_{c,\theta} \coloneqq \{\lambda \in\mathbb{C}: | \arg(\lambda -c)| \leq \theta)\}.$$
		\item A sectorial form $a$ is said to be {\bf closed} if given a sequence $u_n\in D(a)$ with $u_n\to u$ in $H$ and $a(u_n-u_m)\to0$ as $n,m\to\infty$, we have $u\in D(a)$ and $a(u_n-u)\to 0$ as $n\to\infty$.
	\end{enumerate}
\end{definition}
\begin{definition}[Kato] A family of forms $a(z), z\in D_0\subseteq\mathbb{C}^M$ is called a {\bf holomorphic family of type (a)} if \begin{enumerate}
		\item each $a(z)$ is sectorial and closed with domain $D\subseteq H$
		independent of $z$ and dense in $H$,
		\item $a(z)[u,u]$ is holomorphic for $z\in D_0\subseteq\mathbb{C}^M$ for each $u\in D$.
	\end{enumerate}
\end{definition}
A family of operators is called a {\bf holomorphic family of type (B)} if it generates a holomorphic family of forms of type (a).

In~\cite{katoPerturbationTheoryLinear1995,reedMethodsModernMathematical1978}, Kato-Rellich theorem is stated only for a single parameter family. In~\cite{baumgartelAnalyticPerturbationTheory1985}, one can find the proof of Kato-Rellich theorem for multiple parameters with the added assumption of simplicity for the eigenvalue at $\eta=0$.

\begin{theorem}{(Kato-Rellich)}
	Let $D(\tilde{\eta})$ be a self-adjoint holomorphic family of type (B) defined for $\tilde{\eta}$ in an open set in $\mathbb{C}^M$. Further let $\lambda_0=0$ be an isolated eigenvalue of $D(0)$ that is algebraically simple.  Then there exists a neighborhood $R_0\subseteq \mathbb{C}^M$ containing $0$ such that for $\tilde{\eta}\in R_0$, the following holds:
	\begin{enumerate}
		\item There is exactly one point $\lambda(\tilde{\eta})$ of $\sigma(D(\tilde{\eta}))$ near $\lambda_0=0$. Also, $\lambda(\tilde{\eta})$ is isolated and algebraically simple. Moreover, $\lambda(\tilde{\eta})$ is an analytic function of $\tilde{\eta}$.
		\item There is an associated eigenfunction $\phi(\tilde{\eta})$ depending analytically on $\tilde{\eta}$ with values in $H$.
	\end{enumerate}
\end{theorem}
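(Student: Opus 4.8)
The plan is to run the classical Riesz-projection argument of analytic perturbation theory in the several-variable setting; the one-parameter version is in \cite{katoPerturbationTheoryLinear1995,reedMethodsModernMathematical1978} and the multiparameter version, under exactly the simplicity hypothesis at the base point assumed here, is in \cite{baumgartelAnalyticPerturbationTheory1985}, so at the level of the paper one may simply cite these; but here is the route I would present. Write $a(\tilde{\eta})$ for the holomorphic family of forms of type (a) generated by the type-(B) family $D(\tilde{\eta})$, with common form domain dense in $H$. Step one is the passage from forms to resolvents: one shows there is a neighbourhood $\mathcal{N}\subseteq\mathbb{C}^M$ of $0$ and a circle $\Gamma\subset\mathbb{C}$ centred at $\lambda_0=0$ enclosing no other point of $\sigma(D(0))$, such that for every $(\tilde{\eta},\zeta)\in\mathcal{N}\times\Gamma$ the resolvent $R(\zeta,\tilde{\eta})\coloneqq(\zeta-D(\tilde{\eta}))^{-1}$ exists as a bounded operator on $H$ and is jointly holomorphic in $(\zeta,\tilde{\eta})$. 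By Hartogs' theorem this reduces to separate holomorphy in each of the $M+1$ complex variables together with local boundedness, and separate holomorphy together with uniform invertibility is exactly what Kato's first representation theorem plus a resolvent Neumann series yield, provided the sectoriality and closedness constants of $a(\tilde{\eta})$ are controlled uniformly for $\tilde{\eta}$ near $0$.

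Step two defines the Riesz spectral projection
\[
  P(\tilde{\eta})\coloneqq\frac{1}{2\pi i}\oint_{\Gamma}R(\zeta,\tilde{\eta})\,d\zeta,\qquad \tilde{\eta}\in\mathcal{N}.
\]
Because the integrand is holomorphic in $\tilde{\eta}$ and continuous in $\zeta$ on the compact contour, $\tilde{\eta}\mapsto P(\tilde{\eta})$ is holomorphic with values in the bounded operators on $H$; the usual contour-calculus identities show each $P(\tilde{\eta})$ is a projection commuting with $D(\tilde{\eta})$ whose range is the span of the generalized eigenspaces of $D(\tilde{\eta})$ for eigenvalues inside $\Gamma$. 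Since $\lambda_0=0$ is algebraically simple, $\operatorname{rank}P(0)=1$; as the rank of a norm-continuous family of projections is locally constant, after shrinking $\mathcal{N}$ to $R_0$ we obtain $\operatorname{rank}P(\tilde{\eta})=1$ for all $\tilde{\eta}\in R_0$. This yields the assertion of (1) that $D(\tilde{\eta})$ has exactly one point $\lambda(\tilde{\eta})$ of its spectrum inside $\Gamma$ and that it is isolated and algebraically simple; on the real slice $\tilde{\eta}\in R_0\cap\mathbb{R}^M$ self-adjointness further identifies $P(\tilde{\eta})$ with the orthogonal eigenprojection and forces $\lambda(\tilde{\eta})$ to be real.

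Step three extracts the holomorphic eigenpair. Put $S(\tilde{\eta})\coloneqq\frac{1}{2\pi i}\oint_{\Gamma}\zeta\,R(\zeta,\tilde{\eta})\,d\zeta$, which is holomorphic in $\tilde{\eta}$ and equals $D(\tilde{\eta})P(\tilde{\eta})=\lambda(\tilde{\eta})P(\tilde{\eta})$ on the one-dimensional range of $P(\tilde{\eta})$. Fix $\phi_0\in H$ with $P(0)\phi_0\neq0$, for instance the unperturbed eigenfunction; by continuity $P(\tilde{\eta})\phi_0\neq0$ on $R_0$ after a further shrink. Then $\phi(\tilde{\eta})\coloneqq P(\tilde{\eta})\phi_0$ is a nonzero vector in the eigenspace, hence an eigenfunction of $D(\tilde{\eta})$, and $\tilde{\eta}\mapsto\phi(\tilde{\eta})\in H$ is holomorphic, which is (2). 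Finally, applying the fixed functional $\langle\,\cdot\,,\phi_0\rangle$ to $S(\tilde{\eta})\phi_0=\lambda(\tilde{\eta})\phi(\tilde{\eta})$ and dividing by $\langle\phi(\tilde{\eta}),\phi_0\rangle$, which is nonzero near $0$, exhibits $\lambda(\tilde{\eta})$ as a quotient of holomorphic functions with nonvanishing denominator, hence analytic; this completes (1).

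The main obstacle is Step one: turning the qualitative hypothesis ``holomorphic family of forms of type (a)'' into the quantitative statement that $R(\zeta,\tilde{\eta})$ is uniformly bounded and jointly holomorphic on a full neighbourhood $\mathcal{N}\times\Gamma$. Everything after that is contour calculus and the locally-constant-rank principle for continuous families of projections, which are routine. Concretely one must verify that the numerical ranges $\Theta(a(\tilde{\eta}))$ remain inside a common sector and that the closedness bounds remain uniform for $\tilde{\eta}$ in a complex neighbourhood of $0$ — a continuity-of-forms estimate — and then close the resolvent perturbation series; this is the technical core carried out in \cite{katoPerturbationTheoryLinear1995} for one variable and in \cite{baumgartelAnalyticPerturbationTheory1985} for several.
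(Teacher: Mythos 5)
Your sketch is correct and is exactly the classical Riesz-projection argument that the paper relies on: the paper does not prove this theorem itself but quotes it, citing Kato and Reed--Simon for the one-parameter case and Baumg\"artel for the multiparameter case under the simplicity assumption, and those references carry out precisely the steps you outline (uniform sectoriality and resolvent holomorphy for the type (B) family, holomorphic spectral projection with locally constant rank, and extraction of the analytic eigenpair from $P(\tilde{\eta})$). So there is nothing to correct; the only technical point you rightly flag, uniform control of the sector and closedness bounds near $\tilde{\eta}=0$, is the part the paper verifies separately for its concrete form $t(\tilde{\eta})$ in the proof of its analyticity theorem.
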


The proof of Theorem~\ref{analytic} proceeds by complexifying the shifted operator $\mathcal{A}^\rho(\eta)$ before verifying the hypothesis of Kato-Rellich Theorem.

\begin{proof}{(Proof of Theorem~\ref{analytic})} 
	
\begin{itemize}[wide, nosep, labelindent = 0pt, topsep = 1ex]
\item [\bf(i) Complexification of $\mathcal{A}^\rho(\eta)$: ] \hfill \\ The form $a^\rho[\eta](\cdot,\cdot)$ is associated with the operator $\mathcal{A}^\rho(\eta)$.  We define its complexification as 
	$$t(\tilde{\eta})=\int_Y A(\nabla+i\sigma-\tau)u\cdot(\nabla-i\sigma+\tau)\overline{u}\,dy+\rho^2\int_Y |(\nabla+i\sigma+\tau)^2 u|^2\,dy$$ 
	for $\tilde{\eta}\in R$ where \begin{equation*}R\coloneqq\{\tilde{\eta}\in\mathbb{C}^M:\tilde{\eta}=\sigma+i\tau, \sigma,\tau\in\mathbb{R}^M, |\sigma|<1/2,|\tau|<1/2\}.\end{equation*}
\item[\bf(ii) the form $t(\tilde{\eta})$ is sectorial: ] \hfill
	
We have \begin{align*}
	t(\tilde{\eta})&=\int_Y A(\nabla+i\sigma-\tau)u\cdot(\nabla-i\sigma+\tau)\overline{u}\,dy+\rho^2\int_Y |(\nabla+i\sigma+\tau)^2 u|^2\,dy\\
	&=\int_Y A(\nabla+i\sigma) u\cdot(\nabla-i\sigma)\overline{u}\,dy-\int_Y A(\tau u)\cdot \nabla\overline{u}\,dy+ \int_Y A \nabla u\cdot(\tau\overline{u})\, dy\\
	&\qquad -\int_Y A\tau u\cdot\tau\overline{u}\,dy+i \int_Y A\sigma u\cdot\tau\overline{u}\,dy+i \int_Y A\tau u\cdot\sigma \overline{u}\,dy\\
	&\qquad+\rho^2\int_Y \left(\Delta-|\sigma|^2+|\tau|^2\right)u\left(\Delta-|\sigma|^2+|\tau|^2\right)\overline{u} \,dy\\
	&\qquad+2\rho^2\int_Y \left(\Delta-|\sigma|^2+|\tau|^2\right)u\left(\tau\cdot\nabla-i\sigma\cdot\nabla-i\sigma\cdot\tau\right)\overline{u} \,dy\\
	&\qquad+2\rho^2\int_Y \left(i\sigma\cdot\nabla-\tau\cdot\nabla-i\sigma\cdot\tau\right)u\left(\Delta-|\sigma|^2+|\tau|^2\right)\overline{u} \,dy\\
	&\qquad+4\rho^2\int_Y \left(i\sigma\cdot\nabla-\tau\cdot\nabla-i\sigma\cdot\tau\right)u\left(\tau\cdot\nabla-i\sigma\cdot\nabla-i\sigma\cdot\tau\right)\overline{u} \,dy.
	\end{align*} 
	
	From above, it is easy to write separately the real and imaginary parts of the form $t(\tilde{\eta})$.
	\begin{align*}
	\Re t(\tilde{\eta})[u]&=\int_Y A(\nabla+i\sigma) u\cdot(\nabla-i\sigma)\overline{u}\,dy - \int_Y A\tau u\cdot\tau\overline{u}\,dy\\
	&\qquad +\rho^2\int_Y \left(\Delta-|\sigma|^2+|\tau|^2\right)u\left(\Delta-|\sigma|^2+|\tau|^2\right)\overline{u} \,dy\\
	&\qquad - 4\rho^2\int_Y |(\tau\cdot\nabla)u|^2 \,dy + 4\rho^2\int_Y |(\sigma\cdot\nabla)u|^2 \,dy - 4\rho^2\int_Y |(\tau\cdot\sigma)u|^2 \,dy\\
	&\qquad+8\rho^2\text{Re}\left\{ \int_Y i(\tau\cdot\nabla)u(\sigma\cdot\tau)\overline{u} \,dy \right\}+4\rho^2\text{Re}\left\{ \int_Y i(\sigma\cdot\nabla)u\Delta\overline{u} \,dy \right\}\\
	&\qquad+4\rho^2\text{Re}\left\{ \int_Y i|\sigma|^2u(\sigma\cdot\nabla)\overline{u} \,dy \right\}.
	\end{align*}
	\begin{align*}
	\Im\,t(\tilde{\eta})[u]&= \int_Y A\sigma u\cdot\tau\overline{u}\,dy + \int_Y A\tau u\cdot\sigma \overline{u}\,dy + 2\Im \left\{\int_Y A \nabla u\cdot \tau \overline{u}\,dy\right\}\\
	&\qquad +8\rho^2\text{Im}\left\{ \int_Y i(\tau\cdot\sigma)u(\sigma\cdot\nabla)\overline{u} \,dy \right\}+8\rho^2\text{Im}\left\{ \int_Y i(\sigma\cdot\nabla)u(\sigma\cdot\tau)\overline{u} \,dy \right\}\\
	&\qquad +4\rho^2\text{Im}\left\{ \int_Y \Delta u(\tau\cdot\nabla)\overline{u} \,dy \right\}-4\rho^2\text{Im}\left\{ \int_Y i \Delta u(\sigma\cdot\tau)\overline{u} \,dy \right\}\\
	&\qquad +4\rho^2\text{Im}\left\{ \int_Y (\tau\cdot\nabla)u|\sigma|^2\overline{u} \,dy \right\}+4\rho^2\text{Im}\left\{ \int_Y i|\sigma|^2u(\sigma\cdot\tau)\overline{u} \,dy \right\}\\
	&\qquad +4\rho^2\text{Im}\left\{ \int_Y |\tau|^2u(\tau\cdot\nabla)\overline{u} \,dy \right\}-4\rho^2\text{Im}\left\{ \int_Y i|\tau|^2u(\sigma\cdot\tau)\overline{u} \,dy \right\}.
	\end{align*}

	The following coercivity estimate can be easily found for the real part:
	\begin{align}\label{sectoriality2}
	\Re t(\tilde{\eta})[u] + C_5 ||u||^2_{L^2_\sharp(Y)}\geq \frac{\alpha}{2}\left(||u||^2_{L^2_\sharp(Y)}+||\nabla u||^2_{L^2_\sharp(Y)}\right) + \frac{\rho^2}{6}||\Delta u||^2_{L^2_\sharp(Y)}.	
	\end{align}
	
	Let us define the new form $\tilde{t}(\tilde{\eta})$ by $\tilde{t}(\tilde{\eta})[u,v]=t(\tilde{\eta})[u,v]+(C_5+C_6)(u,v)_{L^2_\sharp(Y)}$. Then it holds that 
	\begin{align*}
	\Re \tilde{t}(\tilde{\eta})[u]\geq \frac{\alpha}{2}\left(||u||^2_{L^2_\sharp(Y)}+||\nabla u||^2_{L^2_\sharp(Y)}\right) +\frac{\rho^2}{6}||\Delta u||^2_{L^2_\sharp(Y)}+C_6||u||^2_{L^2_\sharp(Y)}.
	\end{align*}

	Also, the imaginary part of $\tilde{t}(\tilde{\eta})$ can be estimated as follows:
	\begin{align*}
	\Im \tilde{t}(\tilde{\eta})[u]&\leq C_7 ||u||^2_{L^2_\sharp(Y)}+C_8 ||\nabla u||^2_{L^2_\sharp(Y)}+C_9 ||\Delta u||^2_{L^2_\sharp(Y)}\\
	&\overset{\text{$C_{10}=\max\{2C_8/\alpha,6C_9/\rho^2\}$}}{\underset{\text{$C_6=C_{10}/C_7$}}{=}}C_{10}\left(C_6||u||^2_{L^2_\sharp(Y)}+\frac{\alpha}{2}||\nabla u||^2_{L^2_\sharp(Y)}+\frac{\rho^2}{6}||\Delta u||^2_{L^2_\sharp(Y)}\right)\\
	&\leq C_{10}\left(\Re \tilde{t}(\tilde{\eta})[u]-\frac{\alpha}{2}||u||^2_{L^2_\sharp(Y)}\right).
	\end{align*} 
	
	This shows that $\tilde{t}(\tilde{\eta})$ is sectorial. However, sectoriality is invariant under translations by scalar multiple of identity operator in $L^2_\sharp(Y)$, therefore the form $t(\tilde{\eta})$ is also sectorial.
	
\item[\bf (iii) The form $t(\tilde{\eta})$ is closed: ] \hfill \\ 
Suppose that $u_n\stackrel{t}{\to}u$. This means that $u_n\to u$ in $L^2_\sharp(Q)$ and $t(\tilde{\eta})[u_n-u_m]\to 0$. As a consequence, $\Re t(\tilde{\eta})[u_n-u_m]\to 0$. By~\eqref{sectoriality2}, $||u_n-u_m||_{H^2_\sharp(Y)}\to 0$, i.e., $(u_n)$ is Cauchy in $H^2_\sharp(Y)$. Therefore, there exists $v\in H^2_\sharp(Y)$ such that $u_n\to v$ in $H^2_\sharp(Y)$. Due to uniqueness of limit in $L^2_\sharp(Y)$, $v=u$. Therefore, the form is closed.
\item[{\bf(iv) The form $t(\tilde{\eta})$ is holomorphic:} ]  \hfill \\ 
The holomorphy of $t$ follows as a consequence of $t$ being a quartic polynomial in $\eta$.
\item[{\bf (v) $0$ is an isolated eigenvalue: }]  \hfill \\ 
Zero is an eigenvalue because constants are eigenfunctions of $\mathcal{A}^\rho(0)=-\nabla\cdot A\nabla+\rho^2\Delta^2$. As a result, $C_*$ is an eigenvalue of $\mathcal{A}^\rho(0)+C_*I$. We proved using Lemma~\ref{coercivityfortranslate} that $\mathcal{A}^\rho(0)+C_*I$ has compact resolvent. Therefore, $C_*^{-1}$ is an eigenvalue of $(\mathcal{A}^\rho(0)+C_*I)^{-1}$ and $C_*^{-1}$ is isolated. Hence, zero is an isolated point of the spectrum of $\mathcal{A}^\rho(0)$.
\item[{\bf (vi) $0$ is a geometrically simple eigenvalue: }]  \hfill \\ 
Denote by $\text{ker}\,\mathcal{A}^\rho(0)$ the kernel of operator $\mathcal{A}^\rho(0)$. Let $v\in \text{ker}\,\mathcal{A}^\rho(0)$, then $\int_Y A\nabla v\cdot\nabla v\,dy+\rho^2\int_Y |\Delta v|^2=0$. Due to the coercivity of the matrix $A$, we obtain $||\nabla v||_{L^2_\sharp(Y)}=0$. Hence, $v$ is a constant. This shows that the eigenspace corresponding to eigenvalue $0$ is spanned by constants, therefore, it is one-dimensional.
\item[{\bf (vii) $0$ is an algebraically simple eigenvalue: }]  \hfill \\ 
Suppose that $v\in H^2_\sharp(Y)$ such that $\mathcal{A}^\rho(0)^2v=0$, i.e., $\mathcal{A}^\rho(0)v\in ker\,\mathcal{A}^\rho(0)$. This implies that $\mathcal{A}^\rho(0)v=C$ for some generic constant $C$. However, by the compatibility condition for the solvability of this equation, we obtain $C=0$. Therefore, $v\in ker\,\mathcal{A}^\rho(0)$. This shows that the eigenvalue $0$ is algebraically simple.
\end{itemize}
\end{proof}

\section{Neighbourhood of Analyticity}\label{nbdanalytic}	
In~\cref{regularityBloch}, we have proved that the first Bloch eigenvalue and eigenfunction is analytic in a neighbourhood of $\eta=0$. However, a priori, this neighbourhood depends on the parameter $\rho$. In this section, we will prove that the neighbourhood is, in fact, independent of $\rho$. This requirement is essential for problems where simultaneous limits with respect to two parameters are studied, such as~\cite{ortegaBlochWaveHomogenization2007,dupuyBlochWavesHomogenization2009}; as well as for quantitative estimates, such as~\cite{concaBlochApproximationPeriodically2005}. 

We begin by proving that Bloch eigenvalues are Lipschitz continuous in the dual parameter.

\begin{lemma}
	For all $m\in\mathbb{N}$ and $\rho>0$, $\lambda_m^{\rho}$ is a Lipschitz continuous function of $\eta\in Y^{'}$.
\end{lemma}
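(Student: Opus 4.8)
The plan is to use the Courant--Fischer min-max characterization of the eigenvalues $\lambda^\rho_m(\eta)$ together with the G\aa rding inequality of \cref{coercivityfortranslate} to control the test functions that appear. By \cref{coercivityfortranslate} and \cref{existenceofBlocheig}, for each $\eta\in Y'$ the operator $\mathcal{A}^\rho(\eta)$ is bounded below with compact resolvent, so
\[
\lambda^\rho_m(\eta)=\min_{\substack{V\subseteq H^2_\sharp(Y)\\ \dim V=m}}\ \max_{\substack{u\in V,\ u\neq 0}}\frac{a^\rho[\eta](u,u)}{\|u\|^2_{L^2_\sharp(Y)}},
\]
the minimum being attained on $V_\eta\coloneqq\operatorname{span}\{\phi^\rho_1(\cdot,\eta),\dots,\phi^\rho_m(\cdot,\eta)\}$. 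The strategy is: given $\eta,\eta'\in Y'$, insert the space $V_{\eta'}$ into the min-max at $\eta$ and estimate $a^\rho[\eta](u,u)-a^\rho[\eta'](u,u)$ for $u\in V_{\eta'}$.

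First I would record a uniform-in-$\eta$ upper bound for $\lambda^\rho_m$ on $Y'$: testing the min-max with the fixed $m$-dimensional space $V_0\coloneqq\operatorname{span}\{e^{in\cdot y}:n\in S\}$ for some $S\subseteq\mathbb{Z}^d$ with $|S|=m$, on which all $H^2_\sharp$-norms are comparable to the $L^2_\sharp$-norm, and using that $|\eta|$ is bounded on $Y'$, yields a finite constant $M^\rho_m$ with $\lambda^\rho_m(\eta)\le M^\rho_m$ for all $\eta\in Y'$. Next comes the core estimate. Since $A$ is real symmetric, for fixed $u$ the map $\eta\mapsto a^\rho[\eta](u,u)$ is a real-valued polynomial of degree at most four whose coefficients are quadratic forms in $u$ involving only $u$, $\nabla u$ and $\Delta u$ (expand $(\nabla+i\eta)^2u=\Delta u+2i\,\eta\cdot\nabla u-|\eta|^2u$). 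Writing the difference $a^\rho[\eta](u,u)-a^\rho[\eta'](u,u)$ as $\int_0^1\frac{d}{dt}\bigl(a^\rho[\eta'+t(\eta-\eta')](u,u)\bigr)\,dt$ and bounding the integrand — a cubic polynomial in the intermediate parameter, with coefficients quadratic in $u,\nabla u,\Delta u$ — on the segment $[\eta',\eta]\subseteq Y'$ by Cauchy--Schwarz, one obtains
\[
\bigl|a^\rho[\eta](u,u)-a^\rho[\eta'](u,u)\bigr|\le C(\rho)\,|\eta-\eta'|\,\|u\|^2_{H^2_\sharp(Y)}
\]
(the quartic $\eta$-dependence is harmless since $Y'$ is bounded).

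Finally, for $u\in V_{\eta'}$ the $L^2_\sharp$-orthonormality of the eigenfunctions gives $a^\rho[\eta'](u,u)\le\lambda^\rho_m(\eta')\|u\|^2_{L^2_\sharp(Y)}$, while \cref{coercivityfortranslate} at $\eta'$ yields $\|u\|^2_{H^2_\sharp(Y)}\le C(\rho)\bigl(a^\rho[\eta'](u,u)+C_*\|u\|^2_{L^2_\sharp(Y)}\bigr)\le C(\rho)(M^\rho_m+C_*)\|u\|^2_{L^2_\sharp(Y)}$. Combining the three displays, $a^\rho[\eta](u,u)/\|u\|^2_{L^2_\sharp(Y)}\le\lambda^\rho_m(\eta')+C(\rho,m)|\eta-\eta'|$ for every nonzero $u\in V_{\eta'}$; taking the maximum over such $u$ and invoking the min-max at $\eta$ gives $\lambda^\rho_m(\eta)\le\lambda^\rho_m(\eta')+C(\rho,m)|\eta-\eta'|$, and exchanging the roles of $\eta$ and $\eta'$ finishes the proof. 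The step needing the most care is the core polynomial estimate combined with the uniform control of $\|u\|_{H^2_\sharp(Y)}$ over the whole eigenspace $V_{\eta'}$ and over $\eta'\in Y'$ — precisely what the a priori bound $M^\rho_m$ and \cref{coercivityfortranslate} supply. The resulting Lipschitz constant depends on $m$ and $\rho$, which is enough here; uniformity in $\rho$ is addressed separately.
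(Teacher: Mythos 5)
Your argument is correct as a proof of the stated lemma, and it shares the paper's overall skeleton (perturb the sesquilinear form in $\eta$, feed the perturbation through Courant--Fischer), but it exploits the min-max differently. The paper bounds the form difference by $C|\eta-\eta'|\,\|u\|^2_{H^1_\sharp(Y)}+C'\rho^2|\eta-\eta'|\bigl(\|\Delta u\|^2_{L^2_\sharp(Y)}+\|u\|^2_{L^2_\sharp(Y)}\bigr)$ with $C,C'$ independent of $\rho$, and then compares, via a Weyl-type use of min-max, against the two fixed reference spectral problems $-\Delta+I$ and $\Delta^2+I$; this yields the explicit Lipschitz constant $C(\mu_m+\rho^2\nu_m)$ of \cref{lipschitzbound}. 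You instead insert the eigenspace $V_{\eta'}$ as a competitor and control $\|u\|^2_{H^2_\sharp(Y)}$ on it through the G\aa rding inequality of \cref{coercivityfortranslate} together with an a priori bound $M^\rho_m$ on $\lambda^\rho_m$ over $Y'$; each individual step (the quartic-polynomial difference estimate, the diagonalization bound $a^\rho[\eta'](u,u)\le\lambda^\rho_m(\eta')\|u\|^2_{L^2_\sharp(Y)}$ on $V_{\eta'}$, the trigonometric test space) is sound. The trade-off is in the constant: because you lump the $H^1$-level and the $\rho^2$-weighted $H^2$-level contributions into a single $C(\rho)|\eta-\eta'|\,\|u\|^2_{H^2_\sharp(Y)}$ and then pay a factor of order $\rho^{-2}$ to recover $\|\Delta u\|^2$ from coercivity, your Lipschitz constant $C(\rho,m)$ loses the additive structure $C_1+C_2\rho^2$ and degenerates as $\rho\to 0$. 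That is enough for the lemma as stated (fixed $\rho>0$), but the paper's explicit form \cref{lipschitzbound} is precisely what is invoked later, e.g.\ in \cref{independentnbd} (the bound $2(C+\rho^2)|\eta|$) and in \cref{uniformconvbloch} (uniform local boundedness for $0\le\rho<\rho_0$); your closing remark that uniformity in $\rho$ "is addressed separately" is therefore slightly misleading --- in the paper it comes out of this very lemma. Note that your proof could be upgraded to match: keep the split $C_a\|u\|^2_{H^1_\sharp(Y)}+C_b\rho^2\bigl(\|\Delta u\|^2_{L^2_\sharp(Y)}+\|u\|^2_{H^1_\sharp(Y)}\bigr)$ before applying \cref{coercivityfortranslate}, so that the $\rho^2$ prefactor cancels the $6/\rho^2$ from the coercivity of the biharmonic part.
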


\begin{proof}
	The following form is associated with $\mathcal{A}^{\rho}(\eta)$:
	\begin{align}
		a^\rho[\eta](u,u) = \int_Y A (\nabla+i\eta)u\cdot\overline{(\nabla+i\eta)u}\,dy+\rho^2\int_Y (\nabla+i\eta)^2 u\overline{(\nabla+i\eta)^2 u}\,dy.
	\end{align}
	Hence, for $\eta,\eta'\in Y^{'}$, we have 
	\begin{align*}
		a^{\rho}[\eta]-a^{\rho}[\eta']&=2\text{Re}\left\{ \int_Y Ai(\eta-\eta')u\cdot\overline{\nabla u}\,dy \right\} + \int_Y A\eta u\cdot\eta\overline u\,dy + \int_Y A\eta' u\cdot\eta'\overline u\,dy\\
		&\qquad + \rho^2 \int_Y (|\eta|^2-|\eta'|^2)|u|^2\,dy + 4\rho^2 \int_{Y} |\eta\cdot\nabla u|^2 - |\eta'\cdot\nabla u|^2\,dy\\
		&\qquad +2\rho^2i\text{Im}\left\{\int_Y  (|\eta|^2-|\eta'|^2)u \Delta \overline u \, dy\right\} + 4\rho^2i \text{Re} \left\{\int_Y  (\eta-\eta')\cdot\nabla u \Delta \overline{u}\,dy\right\}\\
		&\qquad + 4\rho^2i\text{Re}\left\{\int_Y |\eta|^2 u (\eta\cdot\nabla)\overline{u} - |\eta'|^2 u (\eta'\cdot\nabla)\overline{u}\,dy\right\}\\
		&\leq\qquad C|\eta-\eta'|||u||^2_{H^1_\sharp(Y)}+C'|\eta-\eta'|\rho^2\left\{ ||\Delta u||^2_{L^2_\sharp(Y)}+||u||^2_{L^2_\sharp(Y)} \right\},
		\end{align*} where $C$ and $C'$ are generic constants independent of $\rho$. 
		By the Courant-Fischer minmax characterization of eigenvalues, we obtain
		\begin{align}
			\lambda_m^\rho(\eta)\leq \lambda_m^\rho(\eta') + C|\eta-\eta'|\mu_m+C'\rho^2|\eta-\eta'|\nu_m,
		\end{align} where $\mu_m$ is the $m^{th}$ eigenvalue of the following spectral problem:
		\begin{align*}
			\begin{cases}
			&-\Delta u_m + u_m = \mu_m u_m\text{ in } Y\\
			&u_m\text{ is } Y-\text{periodic},
			\end{cases}
		\end{align*} and $\nu_m$ is the $m^{th}$ eigenvalue of the following spectral problem:
		\begin{align*}
			\begin{cases}
			&\Delta^2 v_m + v_m = \nu_m v_m\text{ in } Y\\
			&v_m\text{ is } Y-\text{periodic}.
			\end{cases}
		\end{align*}
		By interchanging the role of $\eta$ and $\eta'$, we obtain
		\begin{align}\label{lipschitzbound}
			|\lambda_m^{\rho}(\eta)-\lambda_m^{\rho}(\eta')|\leq C(\mu_m+\rho^2\nu_m)|\eta-\eta'|.
		\end{align} Here, $C$ is a generic constant independent of $\rho$.
\end{proof}

Now, we will prove a spectral gap result, viz. the second Bloch eigenvalue is bounded below.

\begin{lemma}\label{lowrhobound}
	For all $m\geq 2$ and for all $\eta\in Y^{'}$, we have
	\begin{align}
		\lambda_m^{\rho}(\eta)\geq \alpha \lambda^{N}_2,
	\end{align} where $\lambda^{N}_2$ is the second Neumann eigenvalue of the operator $-\Delta$ in $Y$.
\end{lemma}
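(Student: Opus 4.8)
\emph{Proof strategy.} Because the Bloch eigenvalues are listed in increasing order, $\lambda_m^\rho(\eta)\ge\lambda_2^\rho(\eta)$ for every $m\ge2$, so it suffices to bound $\lambda_2^\rho(\eta)$ from below. The plan is to discard the nonnegative biharmonic contribution in $a^\rho[\eta]$, use ellipticity of $A$ to compare with the quadratic form of the shifted Laplacian $-(\nabla+i\eta)^2$, and then estimate the second Bloch eigenvalue of that operator by a direct Fourier computation on the cube $Y$.

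For the comparison step: for any $u\in H^2_\sharp(Y)$ the term $\rho^2\int_Y|(\nabla+i\eta)^2u|^2\,dy$ is nonnegative, so by the ellipticity hypothesis A1,
\[
 a^\rho[\eta](u,u)\ \ge\ \int_Y A(\nabla+i\eta)u\cdot\overline{(\nabla+i\eta)u}\,dy\ \ge\ \alpha\,\|(\nabla+i\eta)u\|^2_{L^2_\sharp(Y)};
\]
crucially, $\rho$ has disappeared, which is what will make the final bound uniform in $\rho$. Since $a^\rho[\eta]$ is a sum of two nonnegative quadratic forms and $\mathcal{A}^\rho(\eta)$ has discrete spectrum (\cref{existenceofBlocheig}), the Courant--Fischer min--max principle applies; combined with the displayed inequality it reduces the lemma to the following claim: for every two-dimensional subspace $V\subseteq H^2_\sharp(Y)$ there is a nonzero $u\in V$ with $\|(\nabla+i\eta)u\|^2_{L^2_\sharp(Y)}\ge\tfrac14\,\|u\|^2_{L^2_\sharp(Y)}$.

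To prove the claim, note that $\mathcal{M}_Y(u)=0$ is a single linear constraint on the two-dimensional space $V$, hence is satisfied by some $u\in V\setminus\{0\}$. Expanding $u=\sum_{k\in\ZZ^d}c_ke^{ik\cdot y}$, we then have $c_0=0$, so by Parseval the quantities $\|(\nabla+i\eta)u\|^2_{L^2_\sharp(Y)}$ and $\|u\|^2_{L^2_\sharp(Y)}$ equal, up to a common normalization constant, $\sum_{k\ne0}|k+\eta|^2|c_k|^2$ and $\sum_{k\ne0}|c_k|^2$ respectively; hence the Rayleigh quotient is at least $\inf_{k\in\ZZ^d\setminus\{0\}}|k+\eta|^2$. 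For $\eta\in Y^{'}$ and $k\ne0$, choosing a coordinate with $k_i\ne0$ gives $|k_i+\eta_i|\ge|k_i|-|\eta_i|\ge 1-\tfrac12=\tfrac12$, so $|k+\eta|^2\ge\tfrac14$. This proves the claim, whence $\lambda_2^\rho(\eta)\ge\alpha/4$. Finally, the Neumann eigenvalues of $-\Delta$ on $Y=[0,2\pi)^d$ are $\tfrac14(k_1^2+\dots+k_d^2)$ with $k_1,\dots,k_d$ nonnegative integers, so $\lambda_2^N=\tfrac14$, and therefore $\lambda_m^\rho(\eta)\ge\lambda_2^\rho(\eta)\ge\alpha\lambda_2^N$ for all $m\ge2$ and $\eta\in Y^{'}$.

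I do not expect a genuine obstacle here. The two points that need care are the \emph{direction} of the min--max inequality — bounding an infimum over two-dimensional subspaces from below forces one to exhibit a good trial function inside an arbitrary such subspace, not merely a clever single one — and the observation that the biharmonic term may simply be thrown away, which is precisely what decouples the estimate from $\rho$ (and is presumably why this uniform-in-$\rho$ bound is singled out). As an alternative to the explicit Fourier step, the inequality $\inf_{k\ne0}|k+\eta|^2\ge\lambda_2^N$ also follows from Neumann/quasiperiodic eigenvalue bracketing on the cube.
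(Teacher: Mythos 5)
Your proof is correct, and its skeleton coincides with the paper's: reduce to $\lambda_2^\rho(\eta)$, invoke Courant--Fischer for the form $a^\rho[\eta]$, throw away the nonnegative biharmonic term (which is exactly what makes the bound uniform in $\rho$), and use ellipticity of $A$. Where you diverge is the endgame. The paper writes $\psi=e^{i\eta\cdot y}\phi$ and then successively enlarges the class of trial subspaces, from the $\eta$-quasi-periodic functions to all two-dimensional subspaces of $H^2(Y)$ and then of $H^1(Y)$; since enlarging the family of subspaces only decreases the min--max, the resulting quantity is precisely the Neumann min--max for $-\Delta$ on $Y$, so the bound $\lambda_2^\rho(\eta)\ge\alpha\lambda_2^N$ drops out with no computation of $\lambda_2^N$ (this is the Neumann bracketing you mention in your last sentence). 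You instead stay on the torus: inside an arbitrary two-dimensional subspace of $H^2_\sharp(Y)$ you select a mean-zero trial function, compute by Parseval that its shifted Rayleigh quotient is at least $\inf_{k\in\ZZ^d\setminus\{0\}}|k+\eta|^2\ge\tfrac14$ for $\eta\in Y'$, and only afterwards identify $\tfrac14=\lambda_2^N$ for the cube of side $2\pi$. Your route is more explicit (it yields the concrete constant $\alpha/4$ and needs no identification of the $H^1(Y)$ min--max with the Neumann spectrum), and your handling of the min--max direction via a trial function in an arbitrary subspace is exactly right; the paper's route is shorter, avoids the Fourier computation, and does not rely on the specific shape or size of $Y$ beyond the meaning of $\lambda_2^N$. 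Both arguments are uniform in $\rho$ for the same reason, and both give the stated inequality.
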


\begin{proof}
	Notice that $\lambda^{\rho}_m(\eta)\geq \lambda_2^{\rho}(\eta)$ for all $m\geq 2$ and for all $\eta\in Y^{'}$.
	Next, observe that
	\begin{align*}
		\lambda_2^{\rho}(\eta)&=\inf_{\stackrel{W\subset H^2_\sharp(Y)}{ \text{dim}(W)=2}} \max_{\stackrel{\phi\in W}{\phi\neq 0}} \frac{\int_Y A\nabla(e^{i\eta\cdot y}\phi)\cdot\nabla(e^{-i\eta\cdot y}\overline{\phi})\,dy+\rho^2\int_Y|\Delta(e^{i\eta\cdot y}\phi)|^2\,dy}{\int_Y |\phi|^2\,dy} \\
		&\geq \inf_{\stackrel{W\subset H^2(Y)}{ \text{dim}(W)=2}} \max_{\stackrel{\psi\in W}{\psi\neq 0}} \frac{\int_Y A\nabla\psi\cdot\nabla\overline{\psi}\,dy+\rho^2\int_Y|\Delta\psi|^2\,dy}{\int_Y |\psi|^2\,dy} \\
		&\geq \inf_{\stackrel{W\subset H^2(Y)}{ \text{dim}(W)=2}} \max_{\stackrel{\psi\in W}{\psi\neq 0}} \frac{\int_Y A\nabla\psi\cdot\nabla\overline{\psi}\,dy}{\int_Y |\psi|^2\,dy} \\
		&\geq \inf_{\stackrel{W\subset H^1(Y)}{ \text{dim}(W)=2}} \max_{\stackrel{\psi\in W}{\psi\neq 0}} \frac{\int_Y A\nabla\psi\cdot\nabla\overline{\psi}\,dy}{\int_Y |\psi|^2\,dy} \\
		&\geq \alpha\inf_{\stackrel{W\subset H^1(Y)}{ \text{dim}(W)=2}} \max_{\stackrel{\psi\in W}{\psi\neq 0}} \frac{\int_Y |\nabla\psi|^2\,dy}{\int_Y |\psi|^2\,dy} \\
		&=\alpha\lambda_2^{N}.
	\end{align*}
\end{proof}
The bound obtained in~\cref{lowrhobound} will be useful for the small $\rho$ regime, however, for the large $\rho$ regime, that is, for $\rho\to\infty$, we need a different lower bound.

\begin{lemma}\label{largerhobound}
	For all $m\geq 2$ and for all $\eta\in Y^{'}$, we have
	\begin{align}
		\lambda^{\rho}_m(\eta)\geq C\rho^2\kappa_2 - C^{'},
	\end{align} where $\kappa_2$ is the $2^{nd}$ eigenvalue of periodic bilaplacian on $Y$, where $C$ and $C^{'}$ are generic constants independent of $\rho$ and $\eta$.
\end{lemma}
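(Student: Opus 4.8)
The plan is to follow the proof of~\cref{lowrhobound}, but---since the factor $\rho^2$ must survive in the lower bound---to discard the elliptic part of the form rather than the biharmonic part. Because $A$ is symmetric and elliptic, for every $u\in H^2_\sharp(Y)$ the term $\int_Y A(\nabla+i\eta)u\cdot\overline{(\nabla+i\eta)u}\,dy$ is real and nonnegative (its imaginary part vanishes by symmetry of $A$, its real part is $\ge 0$ by ellipticity), so
\begin{align}\label{dropellipticterm}
	a^\rho[\eta](u,u)\ \ge\ \rho^2\int_Y\bigl|(\nabla+i\eta)^2u\bigr|^2\,dy\qquad\text{for all }u\in H^2_\sharp(Y),\ \eta\in Y^{'}.
\end{align}
Invoking the Courant--Fischer min-max characterisation and $\lambda_m^\rho(\eta)\ge\lambda_2^\rho(\eta)$ for $m\ge 2$,
\begin{align*}
	\lambda_m^\rho(\eta)\ \ge\ \lambda_2^\rho(\eta)\ =\ \inf_{\substack{W\subset H^2_\sharp(Y)\\ \dim W=2}}\ \max_{u\in W\setminus\{0\}}\ \frac{a^\rho[\eta](u,u)}{\int_Y|u|^2\,dy}\ \overset{\eqref{dropellipticterm}}{\ge}\ \rho^2\inf_{\substack{W\subset H^2_\sharp(Y)\\ \dim W=2}}\ \max_{u\in W\setminus\{0\}}\ \frac{\int_Y|(\nabla+i\eta)^2u|^2\,dy}{\int_Y|u|^2\,dy},
\end{align*}
so everything reduces to a uniform-in-$\eta$ lower bound on the second eigenvalue of the ``magnetic bilaplacian'' $(\nabla+i\eta)^4$ on $Y$-periodic functions.

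That last point is a one-line Fourier estimate. Writing $u=\sum_{n\in\mathbb{Z}^d}c_n e^{in\cdot y}$ and using $(\nabla+i\eta)^2 e^{in\cdot y}=-|n+\eta|^2 e^{in\cdot y}$, the Rayleigh quotient above equals $\sum_n|n+\eta|^4|c_n|^2\big/\sum_n|c_n|^2$ with $c_0=\mathcal{M}_Y(u)$. For any two-dimensional $W\subset H^2_\sharp(Y)$ the functional $u\mapsto\mathcal{M}_Y(u)$ has a nontrivial kernel in $W$, so there is $u\in W\setminus\{0\}$ of mean zero, and for it the quotient equals $\sum_{n\ne0}|n+\eta|^4|c_n|^2\big/\sum_{n\ne0}|c_n|^2\ge\min_{n\ne 0}|n+\eta|^4$. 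Finally, for $n\in\mathbb{Z}^d\setminus\{0\}$ and $\eta\in Y^{'}=[-\tfrac12,\tfrac12)^d$ some coordinate has $|n_i|\ge1$, hence $|n_i+\eta_i|\ge\tfrac12$ and $|n+\eta|^4\ge\tfrac1{16}$; since the second eigenvalue of the periodic bilaplacian on $Y$ is $\kappa_2=\min_{n\ne0}|n|^4=1$, this is exactly $\min_{n\ne0}|n+\eta|^4\ge\tfrac1{16}\kappa_2$. Taking the maximum over $u\in W$ and then the infimum over $W$ gives $\lambda_m^\rho(\eta)\ge\tfrac1{16}\rho^2\kappa_2$ for all $m\ge2$ and $\eta\in Y^{'}$, which is the claim with $C=\tfrac1{16}$; in fact no additive constant is needed and one may take $C'=0$, the term $-C'$ being retained only for parallelism with~\cref{lowrhobound}.

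There is no real obstacle here, only a conceptual pitfall to avoid: one must not replace $(\nabla+i\eta)^2$ by $\Delta$ up to lower-order terms before applying the min-max, since that would cost an error $-C\rho^2\int_Y|u|^2\,dy$ whose constant grows like $\rho^2$ and, because $\kappa_2=1$ is not large, would make the bound vacuous. Keeping the shift $i\eta$ inside the operator is exactly what makes the argument work---on mean-zero functions the symbol $|n+\eta|^2$ stays bounded below uniformly in $\eta$.
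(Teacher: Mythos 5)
Your proof is correct, but it follows a genuinely different route from the paper's. The paper proves \cref{largerhobound} in one line: it recalls the G\aa rding inequality \cref{coercivity} of \cref{coercivityfortranslate} and applies the min--max principle, so its lower bound has the form $\frac{\rho^2}{6}\kappa_2-C_*$, with the additive constant inherited from the zeroth-order correction $C_*$ of \cref{cstar}; since that $C_*$ itself contains a term $16\rho^2$, extracting constants $C,C'$ truly independent of $\rho$ from this route takes some extra bookkeeping that the paper leaves implicit. You instead discard the nonnegative second-order part of the form (legitimate, since $A$ is real, symmetric and elliptic, so that term is real and $\geq0$), keep the shift inside the biharmonic term, and diagonalize $(\nabla+i\eta)^4$ explicitly in Fourier series, arriving at $\lambda^{\rho}_m(\eta)\geq\rho^2\min_{n\neq0}|n+\eta|^4\geq\frac{1}{16}\rho^2$ for all $m\geq2$ and $\eta\in Y^{'}$, i.e.\ the lemma with an explicit $C$, with $C'=0$, uniformly in $\eta$, and with no reference to the G\aa rding constant at all (whichever normalization of $\kappa_2$ one adopts, it is a fixed positive number and is absorbed into $C$). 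What your approach buys is precisely this sharper, constant-free-of-$\rho$ bound and a cleaner input for \cref{independentnbd}; what it uses, and the paper's argument does not need, is the constant-coefficient structure of the singular perturbation, which makes the exact spectral computation for the shifted bilaplacian available. Your closing remark about not replacing $(\nabla+i\eta)^2$ by $\Delta$ before applying min--max identifies exactly where the shorter argument loses a $\rho^2$-sized constant. The small verifications in your argument are all sound: the quotient at a mean-zero element of any two-dimensional subspace bounds the max from below, a linear functional on a two-dimensional space has nontrivial kernel, and $|n_i+\eta_i|\geq\frac{1}{2}$ whenever $|n_i|\geq1$ and $\eta\in\left[-\frac{1}{2},\frac{1}{2}\right)^d$.
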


\begin{proof}
	Recall the following G\aa rding type estimate~\cref{coercivity} for the form $a^{\rho}[\eta]$ associated with the operator $\mathcal{A}^\rho(\eta)$:
	\begin{align*}
		a^\rho[\eta](u,u)+C_*||u||^2_{L^2_\sharp(Y)}\geq \frac{\rho^2}{6}||\Delta u||^2_{L^2_\sharp(Y)}+\frac{\alpha}{2}||u||_{H^1_\sharp(Q)}. 
	\end{align*}
	The inequality in~\cref{largerhobound} follows readily from above by applying the minmax characterization.
\end{proof}

\begin{remark}
	In~\cref{lowrhobound} and~\cref{largerhobound}, we have avoided estimating the second Bloch eigenvalue by using the spectral problem associated with Neumann bilaplacian as it is known to be ill-posed~\cite{provenzanoNoteNeumannEigenvalues2018}. Moreover, polyharmonic Neumann eigenvalue problems on polygonal domains (such as $Y$) are less well understood~\cite{gazzolaEigenvalueProblems2010,ferraressoBabuskaParadoxPolyharmonic2019}. However, suitable natural boundary conditions associated with the operator $\Delta^2-\tau\Delta$ are obtained in~\cite{chasmanIsoperimetricInequalityFundamental2011}. 
\end{remark}

We are finally in a position to prove that the neighbourhood of analyticity of the first Bloch eigenvalue does not depend on the parameter $\rho$.

\begin{theorem}
	\label{independentnbd} There exists a neighbourhood $U=B_{\delta}(0)$ of $\eta=0$ , not depending on $\rho$, such that $\lambda_1^\rho(\eta)$ is analytic on $B_\delta(0)$.
\end{theorem}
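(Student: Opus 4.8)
The plan is to carry out the Riesz--projection construction underlying the Kato--Rellich theorem used in \cref{analytic}, but now keeping track of the $\rho$-dependence of every constant, and to do this separately in a bounded range $0<\rho\le\rho_0$ and an unbounded range $\rho\ge\rho_0$, where $\rho_0$ is a fixed constant chosen below. In both ranges the mechanism is the same: fix a circle $\Gamma=\{|\zeta|=r\}\subset\mathbb{C}$, with $r$ independent of $\rho$, that encloses the eigenvalue $0$ of the unshifted operator and keeps the rest of its spectrum at distance exactly $r$; show that the complexified form $t(\tilde\eta)$ of \cref{analytic} satisfies $|t(\tilde\eta)[u]-t(0)[u]|\le C|\tilde\eta|\,\big(a^\rho[0](u,u)+C_*\|u\|^2_{L^2_\sharp(Y)}\big)$ with $C$ and $C_*$ independent of $\rho$; and then write, with $G:=\mathcal{A}^\rho(0)+C_*I$ (a positive operator whose form domain is $H^2_\sharp(Y)$ by \cref{coercivityfortranslate}), $\zeta-\mathcal{A}^\rho(\tilde\eta)=G^{1/2}\big((\zeta-\mathcal{A}^\rho(0))G^{-1}-R(\tilde\eta)\big)G^{1/2}$, where $\mathcal{A}^\rho(\tilde\eta)$ is the m-sectorial operator of $t(\tilde\eta)$ and $\|R(\tilde\eta)\|_{L^2_\sharp(Y)\to L^2_\sharp(Y)}\le C|\tilde\eta|$ comes from the relative form-bound. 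Since $\mathcal{A}^\rho(0)$ and $G$ commute, $\|(\zeta-\mathcal{A}^\rho(0))^{-1}G\|=\sup_{\mu\in\sigma(\mathcal{A}^\rho(0))}\tfrac{\mu+C_*}{|\zeta-\mu|}=:M$, controlled by $r$, $C_*$ and the gap; so for $|\tilde\eta|<\delta:=1/(CM)$ the resolvent exists on all of $\Gamma$ and is analytic in $\tilde\eta$, the projection $P^\rho(\tilde\eta)=\tfrac1{2\pi i}\oint_\Gamma(\zeta-\mathcal{A}^\rho(\tilde\eta))^{-1}\,d\zeta$ is analytic and of rank one (it does at $\tilde\eta=0$ and the rank is locally constant), and the single eigenvalue it encloses, $\lambda_1^\rho(\tilde\eta)$, is analytic on $B_\delta(0)$. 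The whole point is that $\delta=1/(CM)$ is uniform in $\rho$ once $C$, $C_*$, $M$ are.

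For $0<\rho\le\rho_0$ I would work directly with $\mathcal{A}^\rho(\eta)$. The gap is supplied by \cref{lowrhobound}: $\lambda_m^\rho(\eta)\ge\alpha\lambda^N_2$ for all $m\ge2$, all $\eta$ and all $\rho$, so $\sigma(\mathcal{A}^\rho(0))\subseteq\{0\}\cup[\alpha\lambda^N_2,\infty)$ and one takes $r=\tfrac12\alpha\lambda^N_2$. The relative form-bound is read off the expansion of $t(\tilde\eta)$ in the proof of \cref{analytic}: every summand of $t(\tilde\eta)-a^\rho[0]$ carries at least one factor $|\tilde\eta|$, with coefficient either $O(1)$ — the summands from the $A$-part, bounded by $C|\tilde\eta|(\|u\|^2_{L^2_\sharp(Y)}+\|\nabla u\|^2_{L^2_\sharp(Y)})$ — or $O(\rho^2)\le O(\rho_0^2)$ — the summands from the biharmonic part, bounded by $C\rho^2|\tilde\eta|(\|\Delta u\|^2_{L^2_\sharp(Y)}+\|\nabla u\|^2_{L^2_\sharp(Y)}+\|u\|^2_{L^2_\sharp(Y)})$, where $\rho^2\|\Delta u\|^2_{L^2_\sharp(Y)}\le a^\rho[0](u,u)$ directly and $\rho^2(\|\nabla u\|^2_{L^2_\sharp(Y)}+\|u\|^2_{L^2_\sharp(Y)})\le\rho_0^2\,C(a^\rho[0](u,u)+\|u\|^2_{L^2_\sharp(Y)})$ using \cref{gradtodelta}. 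The constant $C_*$ of \cref{cstar} is also $\le C_*(\rho_0)$, so $C$, $M$, and hence $\delta_1$, are uniform on $(0,\rho_0]$.

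For $\rho\ge\rho_0$ this choice fails, since in the original scale $\lambda_1^\rho(\eta)$ can be as large as $\sim\rho^2|\eta|$ by \cref{lipschitzbound} and escapes any fixed circle; so I would rescale. Put $\widehat{\mathcal{A}}^\rho(\eta):=\rho^{-2}\mathcal{A}^\rho(\eta)=(\nabla+i\eta)^4-\rho^{-2}(\nabla+i\eta)\cdot A(y)(\nabla+i\eta)$, whose first eigenpair is $(\rho^{-2}\lambda_1^\rho(\eta),\phi_1^\rho(\cdot,\eta))$, so analyticity of $\lambda_1^\rho$ is equivalent to that of $\rho^{-2}\lambda_1^\rho$. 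Now the base form $\widehat a^\rho[0](u,u)=\rho^{-2}\int_Y A\nabla u\cdot\overline{\nabla u}+\|\Delta u\|^2_{L^2_\sharp(Y)}$ dominates $\|\Delta u\|^2_{L^2_\sharp(Y)}$; the rescaled G\aa rding constant $\rho^{-2}C_*=16+O(\rho^{-2})$ is uniformly bounded for $\rho\ge1$; and $\widehat t(\tilde\eta)-\widehat a^\rho[0]=\rho^{-2}(t(\tilde\eta)-a^\rho[0])$ again has every summand carrying a factor $|\tilde\eta|$, now with coefficient $O(\rho^{-2})\le1$ on the $A$-terms (and $\rho^{-2}\|\nabla u\|^2_{L^2_\sharp(Y)}\le\tfrac1\alpha\widehat a^\rho[0](u,u)$) and $O(1)$ on the biharmonic terms (bounded by $C(\widehat a^\rho[0](u,u)+\|u\|^2_{L^2_\sharp(Y)})$ via \cref{gradtodelta}), so the relative form-bound constant is uniform for $\rho\ge1$. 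For the gap, \cref{largerhobound} gives $\rho^{-2}\lambda_m^\rho(\eta)\ge C\kappa_2-C'\rho^{-2}$ for $m\ge2$; choosing $\rho_0$ so large that $C'\rho_0^{-2}\le\tfrac12 C\kappa_2$ yields $\rho^{-2}\lambda_m^\rho(\eta)\ge\tfrac12 C\kappa_2>0$ for all $\rho\ge\rho_0$, and one takes $r=\tfrac14 C\kappa_2$. All constants being uniform, this gives $\delta_2>0$ valid for all $\rho\ge\rho_0$, and $\delta:=\min(\delta_1,\delta_2)$ is the required $\rho$-independent radius.

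The main obstacle is this pair of relative form-bound estimates together with the choice of scale in which the spectral gap is measured. The subtle point is that the biharmonic term $\rho^2\Delta^2$ is simultaneously the source of the $\rho^2$-large coefficients in $t(\tilde\eta)-t(0)$ and the reason they can be absorbed, via $\rho^2\|\Delta u\|^2_{L^2_\sharp(Y)}\le a^\rho[0](u,u)$; and that as $\rho\to\infty$ the gap in the original scale degenerates, forcing the passage to $\widehat{\mathcal{A}}^\rho(\eta)$, which is an $O(\rho^{-2})$-relatively-bounded perturbation of the shifted bilaplacian $(\nabla+i\eta)^4$, whose first Bloch eigenvalue $|\eta|^4$ is comfortably isolated by \cref{largerhobound}. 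Extracting the explicit lower bound $\delta=1/(CM)$ on the analyticity radius — rather than the bare existence statement of Kato--Rellich — is the only place where one must go slightly beyond what is quoted from Kato, but it is routine.
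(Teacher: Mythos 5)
Your proposal is correct in substance, but it takes a genuinely different route from the paper. The paper does not redo any perturbation theory: it reuses the pointwise analyticity already supplied by Kato--Rellich in \cref{analytic}, and reduces \cref{independentnbd} to showing that the first eigenvalue stays simple on a $\rho$-independent real ball. That uniform simplicity is obtained by playing the Lipschitz estimate \cref{lipschitzbound} (whose constant grows like $\rho^2$) against the spectral gaps of \cref{lowrhobound} and \cref{largerhobound} (the latter also growing like $\rho^2$, so the two $\rho^2$'s cancel in the large-$\rho$ case). You instead rebuild the analyticity quantitatively: a Riesz-projection/Neumann-series argument with a relative form bound $|t(\tilde\eta)[u]-t(0)[u]|\le C|\tilde\eta|\bigl(a^\rho[0](u,u)+C_*\|u\|^2\bigr)$ whose constants you make uniform on $(0,\rho_0]$ directly and on $[\rho_0,\infty)$ after rescaling by $\rho^{-2}$ --- your rescaling is the explicit counterpart of the paper's cancellation of $\rho^2$ against $\rho^2$, and you use the same two gap lemmas. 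What your approach buys is an explicit lower bound $\delta=1/(CM)$ on the radius and a uniform \emph{complex} neighbourhood of analyticity, which is stronger than what the paper proves here and would, via Cauchy estimates, streamline the later uniform-in-$\rho$ bounds on $\eta$-derivatives (\cref{boundednessofeigfuncforlowrho}, \cref{higherestimates}); what it costs is redoing Kato's type-(B) perturbation machinery. Two routine points you should still supply: the factorization through $G^{1/2}$ needs the two-argument bound $|t(\tilde\eta)[u,v]-t(0)[u,v]|\le C|\tilde\eta|\,\|G^{1/2}u\|\,\|G^{1/2}v\|$ (it follows term by term from the same Cauchy--Schwarz estimates, not by polarization of the diagonal bound), and you must identify the eigenvalue enclosed by $\Gamma$ with the min--max first Bloch eigenvalue for real $\eta$, which requires checking $\lambda_1^\rho(\eta)<r$ (e.g.\ by testing with constants, giving $\lambda_1^\rho(\eta)\le C(1+\rho^2)|\eta|^2$, respectively $\rho^{-2}\lambda_1^\rho(\eta)\le C\rho^{-2}|\eta|^2+|\eta|^4$ after your rescaling), possibly after shrinking $\delta$ by a $\rho$-independent amount.
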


\begin{proof}
	It was proved in~\cref{analytic} that the first Bloch eigenvalue is analytic in a neighbourhood of $\eta=0$. However, \textit{a priori}	it is not clear whether this neighbourhood is independent of $\rho$. To prove this, it suffices to prove that the first Bloch eigenvalue is simple in a neighbourhood of $\eta=0$ independently of $\rho$. Observe that
	\begin{align}
		|\lambda_1^\rho(\eta)-\lambda_2^{\rho}(\eta)|&\geq \lambda_2^\rho(\eta)-|\lambda_1^\rho(\eta)-\lambda_1^\rho(0)|-|\lambda_2^\rho(\eta)-\lambda_2^\rho(0)|\nonumber\\
		&\stackrel{\cref{lipschitzbound}}{\geq} \lambda_2^\rho(\eta)-2(C+\rho^2)|\eta|,\label{spectralgap}
	\end{align} where $C$ is a generic constant independent of $\rho$ and $\eta$. 
	\begin{itemize}
		\item For sufficiently large $\rho$, 
		\begin{align*}|\lambda_1^\rho(\eta)-\lambda_2^{\rho}(\eta)|&\geq \lambda_2^\rho(\eta)-2(C+\rho^2)|\eta|\\
		&\stackrel{\cref{largerhobound}}{\geq} (C^{'}\rho^2-C^{''})-2(C+\rho^2)|\eta|\\
		&\stackrel{\text{large }\rho}{\geq} C^{'''}\rho^2-2\rho^2|\eta|>0
		\end{align*} for $|\eta|<\frac{C^{'''}}{2}$. Here, $C,C^{'},C^{''}$ and $C^{'''}$ are all generic constants independent of $\rho$ and $\eta$.
		\item For remaining values of $\rho$, 
		\begin{align*}|\lambda_1^\rho(\eta)-\lambda_2^{\rho}(\eta)|&\geq \lambda_2^\rho(\eta)-2(C+\rho^2)|\eta|\\
		&\stackrel{\cref{lowrhobound}}{\geq} \alpha\lambda^N_2-2(C+\rho^2)|\eta|\\
		&\geq \alpha\lambda^N_2-2C|\eta|>0
		\end{align*} for $|\eta|<\frac{\alpha\lambda^N_2}{2C}$.
	\end{itemize}
\end{proof}

\begin{remark}
	In the papers~\cite{sivajiganeshBlochApproachAlmost2019,sivajiganeshBlochWaveHomogenisation2020}, an additional artificial parameter is introduced in the Bloch eigenvalue problem to facilitate the homogenization method. Unlike~\cref{main}, these papers employ successive limits of the two parameters instead of simultaneous limits. Therefore, the non-dependence of the neighbourhood of analyticity on the second parameter is not required in~\cite{sivajiganeshBlochApproachAlmost2019,sivajiganeshBlochWaveHomogenisation2020}.
\end{remark}

\section{Cell Problem and Estimates}\label{cellproblem}
In this section, we will consider the classical cell problem associated with~\cref{main} and the estimates for the corrector field. This section will allow us to characterize the homogenized tensor for~\cref{main} and the corrector field in terms of Bloch eigenvalues and eigenfunctions.

For $1\leq j\leq d$, consider the following cell problem associated with the operator~\cref{main}:

\begin{equation}\label{correctors}
	\begin{cases}
		&\rho^2\Delta^2\chi_j^\rho-\text{div}\,A(y)(e_j+\nabla \chi_j^\rho)=0\text{ in }\mathbb{R}^d\\
		&\chi_j^\rho\text{ is }Y-\text{periodic}\\
		&\mathcal{M}_Y(\chi_j^\rho)=\fint_Y\chi_j^\rho(y)\,dy=0.
		\end{cases}
\end{equation}
By a simple application of Lax-Milgram lemma on $H^2_\sharp(Y)$, we obtain solution to above for every $\rho>0$ (For $\rho=0$, Lax-Milgram lemma is applied for $H^1_\sharp(Y)$). Further, since the equation is also satisfied in the sense of distributions, we conclude that $\chi_j^\rho\in H^3_\sharp(Y)$ for $\rho>0$. By using $\chi_j^\rho$ as a test function, we obtain the following bound:
\begin{align}
	\label{energyestimate1} \rho||\Delta\chi_j^\rho||_{L^2_\sharp(Y)}+||\chi_j^\rho||_{H^1_\sharp(Y)} \leq C.
\end{align} If we use $\Delta\chi_j^\rho$ as a test function, we obtain \begin{align}
	\label[]{energyestimate2}
	\rho^2||\nabla^3\chi_j^\rho||_{L^2_\sharp(Y)}\leq C.
\end{align}

We also collect below a few estimates which will be required later. Similar estimates have been proved in~\cite{niuCombinedEffectsHomogenization2020} to which we refer for more details.

\begin{lemma}\label{estimateforposrho}
	Let $\rho_1,\rho_2>0$. Let $\chi^{\rho_1},\chi^{\rho_2}\in H^2_\sharp(Y)$ be solutions to~\cref{correctors} for $rho=\rho_1$ and $\rho=\rho_2$ respectively, then the following estimate holds.
	\begin{align}
		||\nabla \chi^{\rho_1}-\nabla \chi^{\rho_2}||_{L^2(Y)}\leq C |1-(\rho_1/\rho_2)^2|.
	\end{align}
\end{lemma}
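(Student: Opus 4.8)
The plan is to subtract the two weak formulations of the cell problem \cref{correctors} and then to feed the equation for $\chi^{\rho_2}$ back in a second time, so as to convert the mismatch term into one governed only by the $H^1_\sharp(Y)$-norm of $\chi^{\rho_2}$, which is bounded uniformly in $\rho$ by \cref{energyestimate1}. Concretely, for each $\rho>0$ the corrector $\chi^\rho\in H^2_\sharp(Y)$ satisfies
\[
	\rho^2\int_Y \Delta\chi^\rho\,\Delta v\,dy + \int_Y A(e_j+\nabla\chi^\rho)\cdot\nabla v\,dy = 0 \qquad\text{for all } v\in H^2_\sharp(Y),
\]
where we used that a divergence integrates to zero on $\mathbb{T}^d$, so the identity holds against all periodic test functions and not merely mean-zero ones.

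First I would set $w\coloneqq \chi^{\rho_1}-\chi^{\rho_2}\in H^2_\sharp(Y)$, subtract the two identities above, and cancel the $Ae_j$ contributions; using $\rho_1^2\Delta\chi^{\rho_1}-\rho_2^2\Delta\chi^{\rho_2} = \rho_1^2\Delta w + (\rho_1^2-\rho_2^2)\Delta\chi^{\rho_2}$, this becomes
\[
	\rho_1^2\int_Y \Delta w\,\Delta v\,dy + \int_Y A\nabla w\cdot\nabla v\,dy = -(\rho_1^2-\rho_2^2)\int_Y \Delta\chi^{\rho_2}\,\Delta v\,dy \qquad\text{for all } v\in H^2_\sharp(Y).
\]
Then I would substitute the weak formulation for $\chi^{\rho_2}$, namely $\rho_2^2\int_Y\Delta\chi^{\rho_2}\,\Delta v\,dy = -\int_Y A(e_j+\nabla\chi^{\rho_2})\cdot\nabla v\,dy$, into the right-hand side, turning it into $-\big(1-(\rho_1/\rho_2)^2\big)\int_Y A(e_j+\nabla\chi^{\rho_2})\cdot\nabla v\,dy$.

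Finally I would take $v=w$ (admissible since $w\in H^2_\sharp(Y)$), drop the nonnegative term $\rho_1^2||\Delta w||^2_{L^2(Y)}$ on the left, apply hypothesis A1 to get $\int_Y A\nabla w\cdot\nabla w\,dy\ge \alpha||\nabla w||^2_{L^2(Y)}$, and estimate the right-hand side by Cauchy--Schwarz together with the boundedness of $A$ and the bound $||e_j+\nabla\chi^{\rho_2}||_{L^2(Y)}\le |Y|^{1/2}+||\nabla\chi^{\rho_2}||_{L^2(Y)}\le C$ coming from \cref{energyestimate1}. This yields $\alpha||\nabla w||^2_{L^2(Y)}\le C\,|1-(\rho_1/\rho_2)^2|\,||\nabla w||_{L^2(Y)}$, and dividing through gives the claim.

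There is no serious obstacle; the one point demanding care is the asymmetric bookkeeping. One must re-substitute the equation for $\chi^{\rho_2}$ (the corrector whose parameter matches the $\rho_2$ appearing in $|1-(\rho_1/\rho_2)^2|$) rather than that for $\chi^{\rho_1}$, and the naive alternative of estimating $(\rho_1^2-\rho_2^2)\int_Y\Delta\chi^{\rho_2}\,\Delta w\,dy$ directly via \cref{energyestimate1} — which controls only $\rho\,||\Delta\chi^{\rho}||_{L^2}$, not $||\Delta\chi^{\rho}||_{L^2}$ — would produce merely the weaker bound $C\,|\rho_1^2-\rho_2^2|/(\rho_1\rho_2)$. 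It is precisely the re-substitution step that upgrades the estimate to the stated form.
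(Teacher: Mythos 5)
Your proof is correct, and its skeleton is the same as the paper's: subtract the two cell problems \eqref{correctors}, test the equation satisfied by $z=\chi^{\rho_1}-\chi^{\rho_2}$ with $z$ itself, drop the nonnegative biharmonic term, and use the ellipticity assumption A1 to control $\|\nabla z\|_{L^2(Y)}$. The one genuine difference is how the factor $\rho_2^{-2}$ is extracted from the mismatch term: the paper keeps the right-hand side as $(\rho_2^2-\rho_1^2)\Delta^2\chi^{\rho_2}$ and bounds it (after an integration by parts) through the third-order estimate \eqref{energyestimate2}, $\rho^2\|\nabla^3\chi^{\rho}\|_{L^2_\sharp(Y)}\le C$, which directly yields $C|1-(\rho_1/\rho_2)^2|\,\|\nabla z\|_{L^2(Y)}$, whereas you re-substitute the weak form of the $\chi^{\rho_2}$-equation, turning the mismatch into $-\bigl(1-(\rho_1/\rho_2)^2\bigr)\int_Y A(e_j+\nabla\chi^{\rho_2})\cdot\nabla z\,dy$, and close using only the basic energy bound \eqref{energyestimate1}. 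The two devices carry the same information (\eqref{energyestimate2} is itself obtained by testing the cell problem against $\Delta\chi^{\rho}_j$), but your variant never invokes the $H^3_\sharp(Y)$ regularity of the corrector and stays entirely at the level of the $H^2_\sharp(Y)$ variational formulation, while the paper's version is a one-line estimate once \eqref{energyestimate2} is in hand. Your closing observation is also accurate: estimating $(\rho_1^2-\rho_2^2)\int_Y\Delta\chi^{\rho_2}\,\Delta z\,dy$ naively via \eqref{energyestimate1} only gives $C|\rho_1^2-\rho_2^2|/(\rho_1\rho_2)$, so it is precisely the re-substitution (or, in the paper, the use of \eqref{energyestimate2}) that produces the stated factor $|1-(\rho_1/\rho_2)^2|$.
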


\begin{proof}
	Define $z=\chi^{\rho_1}_j-\chi^{\rho_2}_j$, then $z$ satisfies the following equation
	\begin{align}
		\rho_1^2\Delta^2 z-\text{div}\,A(y)\nabla z=(\rho_2^2-\rho_1^2)\Delta^2\chi^{\rho_2}.
	\end{align} Now, the quoted estimate readily follows by taking $z$ as the test function, applying uniform ellipticity of $A$,~\eqref{energyestimate2} and an application of Poincar\'e inequality.
\end{proof}

\begin{lemma}\label{estimateforzerorho}
	Let $\rho>0$. Let $\chi^{\rho}\in H^2_\sharp(Y)$ be the solution to~\cref{correctors} and let $\chi^0\in H^1_\sharp(Y)$ solve
	\begin{equation}
	\begin{cases}
		&-\mbox{div}\,A(y)(e_j+\nabla \chi_j^0)=0\text{ in }\mathbb{R}^d\\
		&\chi_j^0\text{ is }Y-\text{periodic}\\
		&\mathcal{M}_Y(\chi_j^0)=\fint_Y\chi_j^0(y)\,dy=0.
	\end{cases}
\end{equation}
	Then, there is $q\in (1,\infty)$ such that for every $\varkappa>0$ there exists a matrix $B$ with entries in $C^\infty_\sharp(Y)$ such that $||A-B||_{L^q_{\sharp}(Y)}\leq \varkappa$ and the following estimates hold.
	\begin{align}
		||\nabla\chi^0_j-\nabla\chi^B_j||_{L^2(Y)}&\leq C\varkappa\label{oneone}\\
		||\nabla \chi^{\rho}-\nabla \chi^B||_{L^2(Y)}&\leq C\left\{\rho||\chi^B||_{H^2(Y)}+\varkappa\right\}\label{twotwo},
	\end{align} where $\chi^B\in H^1_\sharp(Y)$ solve
	\begin{equation}
	\begin{cases}
		&-\mbox{div}\,B(y)(e_j+\nabla \chi_j^B)=0\text{ in }\mathbb{R}^d\\
		&\chi_j^B\text{ is }Y-\text{periodic}\\
		&\mathcal{M}_Y(\chi_j^B)=\fint_Y\chi_j^B(y)\,dy=0.
	\end{cases}
\end{equation}
\end{lemma}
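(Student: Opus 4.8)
The plan is to build the smooth approximating matrix $B$ first, then derive the two corrector estimates by standard energy methods, using the $L^q$-closeness of $A$ to $B$ together with higher integrability of the correctors. For the construction of $B$: since $A \in L^\infty_\sharp(Y) \subset L^q_\sharp(Y)$ for every $q \in (1,\infty)$, a mollification argument (convolve entrywise with a periodic approximate identity) produces, for any prescribed $\varkappa > 0$, a matrix $B$ with entries in $C^\infty_\sharp(Y)$ and $\|A - B\|_{L^q_\sharp(Y)} \le \varkappa$. I would also note that one can arrange $B$ to inherit ellipticity and symmetry with a comparable constant, so that all the cell problems for $B$ and $B$-type operators are well-posed on $H^1_\sharp(Y)$ (and $H^2_\sharp(Y)$ with the $\rho^2 \Delta^2$ term). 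The exponent $q$ is dictated by Meyers' higher integrability estimate applied to $\nabla \chi^0_j$ and $\nabla \chi^B_j$: there is $p > 2$, depending only on $d$ and the ellipticity constant $\alpha$ and $\|A\|_{L^\infty}$, with $\nabla \chi^0_j, \nabla \chi^B_j \in L^p_\sharp(Y)$ uniformly; then take $q$ to be the conjugate-type exponent so that $\frac{1}{q} = \frac{1}{2} - \frac{1}{p}$, i.e. Hölder's inequality pairs an $L^q$ factor with an $L^p$ factor to land in $L^2$.

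For~\cref{oneone}: let $w = \chi^0_j - \chi^B_j$. Subtracting the two cell problems, $-\dv A \nabla w = \dv\big((A - B)(e_j + \nabla \chi^B_j)\big)$ in the periodic weak sense. Testing with $w$ and using ellipticity of $A$,
\begin{align*}
\alpha \|\nabla w\|_{L^2(Y)}^2 \le \int_Y A \nabla w \cdot \nabla w \, dy = -\int_Y (A-B)(e_j + \nabla \chi^B_j)\cdot \nabla w\,dy \le \|A - B\|_{L^q(Y)} \|e_j + \nabla \chi^B_j\|_{L^p(Y)} \|\nabla w\|_{L^2(Y)},
\end{align*}
where the last step is Hölder with exponents $q, p, 2$. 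Dividing by $\|\nabla w\|_{L^2(Y)}$ and using the uniform Meyers bound $\|e_j + \nabla \chi^B_j\|_{L^p(Y)} \le C$ gives $\|\nabla w\|_{L^2(Y)} \le C\varkappa$, which is~\cref{oneone}.

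For~\cref{twotwo}: let $v = \chi^\rho_j - \chi^B_j$. Here the natural comparison is between the operator $\rho^2\Delta^2 - \dv A\nabla$ applied to $\chi^\rho_j$ and $-\dv B \nabla$ applied to $\chi^B_j$, so $v$ satisfies $\rho^2 \Delta^2 v - \dv A \nabla v = \dv\big((A-B)(e_j + \nabla\chi^B_j)\big) + \rho^2 \Delta^2 \chi^B_j$ weakly on $H^2_\sharp(Y)$. Testing with $v$,
\begin{align*}
\rho^2\|\Delta v\|_{L^2(Y)}^2 + \alpha \|\nabla v\|_{L^2(Y)}^2 &\le \|A-B\|_{L^q(Y)}\|e_j + \nabla\chi^B_j\|_{L^p(Y)}\|\nabla v\|_{L^2(Y)} + \rho^2\Big|\int_Y \Delta\chi^B_j\,\Delta v\,dy\Big|\\
&\le C\varkappa \|\nabla v\|_{L^2(Y)} + \rho^2 \|\Delta \chi^B_j\|_{L^2(Y)}\|\Delta v\|_{L^2(Y)}.
\end{align*}
Absorbing $\rho^2\|\Delta v\|_{L^2(Y)}^2$ on the left via Young's inequality on the last term, and absorbing a fraction of $\alpha\|\nabla v\|_{L^2(Y)}^2$ from the first term, yields $\alpha\|\nabla v\|_{L^2(Y)}^2 \le C\varkappa^2 + C\rho^2\|\Delta\chi^B_j\|_{L^2(Y)}^2 \le C\varkappa^2 + C\rho^2\|\chi^B_j\|_{H^2(Y)}^2$; taking square roots and using $\sqrt{a+b}\le\sqrt a + \sqrt b$ gives $\|\nabla v\|_{L^2(Y)} \le C\{\rho\|\chi^B_j\|_{H^2(Y)} + \varkappa\}$, which is~\cref{twotwo}. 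Note the estimate is only useful when $\rho$ is small relative to $\|\chi^B_j\|_{H^2(Y)}^{-1}$, which is the regime of interest; the $H^2$ norm of $\chi^B_j$ depends on the fixed smooth matrix $B$ (hence on $\varkappa$) but not on $\rho$.

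The main obstacle is the higher-integrability input: one must invoke Meyers' estimate to gain the exponent $p > 2$ \emph{uniformly} for the family of divergence-form correctors — uniformly in $j$, and with a bound on $\|e_j + \nabla\chi^B_j\|_{L^p}$ that does not blow up as $B$ varies over the mollifications of $A$. This is fine because the Meyers exponent and constant depend only on $d$, $\alpha$, and an upper bound for the coefficient $L^\infty$ norm, all of which are controlled uniformly once we arrange $\alpha_B \ge \alpha/2$ and $\|B\|_{L^\infty} \le \|A\|_{L^\infty}$ in the mollification. Everything else is routine energy estimates, Hölder, and Young's inequality; I would also remark (as the paper does) that analogous estimates appear in~\cite{niuCombinedEffectsHomogenization2020}.
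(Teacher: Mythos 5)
Your proof is correct, and it follows the same broad template as the paper (mollify $A$ to get $B$, subtract cell problems, energy estimate, H\"older with a Meyers-type exponent), but the decomposition is genuinely different at the key step. In the paper, the difference $z=\chi^\rho_j-\chi^B_j$ is written so that the operator $\rho^2\Delta^2-\dv B\nabla$ sits on the left and the coefficient error appears as $\dv\bigl((A-B)\nabla\chi^\rho_j\bigr)$ on the right; the H\"older step therefore pairs $A-B$ with $\nabla\chi^\rho_j$, and the needed higher integrability is a Meyers estimate for the corrector of the \emph{singularly perturbed} operator, uniform in $\rho$, which the paper imports from~\cite[Theorem~2.3]{niuCombinedEffectsHomogenization2020}; that theorem is also what fixes $q$. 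You instead keep $-\dv A\nabla$ (plus $\rho^2\Delta^2$) on the left and put the error in the form $\dv\bigl((A-B)(e_j+\nabla\chi^B_j)\bigr)$, so $A-B$ is paired with $e_j+\nabla\chi^B_j$ and only the classical Meyers estimate for second-order divergence-form operators is required, applied uniformly over the mollified family $B$ (which is legitimate, since mollification preserves the ellipticity constant and the $L^\infty$ bound, so the exponent $p>2$ and the constant depend only on $d$, $\alpha$, $\|A\|_{L^\infty}$). What each route buys: yours avoids the $\rho$-uniform fourth-order Meyers input entirely and is in that sense more elementary and self-contained; the paper's avoids having to track uniformity of the Meyers constant across the approximating matrices $B$ (a minor point) and leans on the quantitative machinery of Niu--Shen that it cites anyway. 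Your absorption via Young's inequality for~\cref{twotwo} and your explicit proof of~\cref{oneone} (which the paper omits as ``similar and simpler'') are both sound; the only hypotheses you use implicitly --- $\chi^B_j\in H^2_\sharp(Y)$ by elliptic regularity for smooth $B$, so that testing the fourth-order equation against $v\in H^2_\sharp(Y)$ and the term $\int_Y\Delta\chi^B_j\,\Delta v\,dy$ make sense --- are exactly what the paper records, so nothing is missing.
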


\begin{proof}
	Observe that given any $q\in(1,\infty)$ and $\varkappa>0$, we can find a smooth periodic matrix $B$ with the same ellipticity constant and upper bound as $A$ such that $||A-B||_{L^q_{\sharp}(Y)}\leq \varkappa$. For example, this can be achieved by a standard smoothing by convolution. Now, by regularity theory, $\chi^B_j\in H^2_\sharp(Y)$.
	Define $z=\chi^{\rho}_j-\chi^B_j$, then $z$ satisfies the following equation
	\begin{align}
		\rho^2\Delta^2 z-\text{div}\,B(y)\nabla z=-\rho^2\Delta^2\chi^B_j+\text{div}(A-B)\nabla\chi^\rho_j.
	\end{align}
	We test this equation against $z$ to obtain:
	\begin{align*}
		\rho^2\int_Y|\Delta z|^2\,dy+\int_Y A(y)\nabla z\cdot\nabla z\,dy&\leq \rho^2\int_Y|\Delta\chi^B_j||\Delta z|\,dy+\int_Y|A-B||\nabla\chi^\rho_j||\nabla z|\,dy. 
\end{align*} This leads to
\begin{align*}
		\rho^2||\Delta z||_{L^2}^2+\alpha||\nabla z||^2_{L^2}&\leq \rho^2||\Delta\chi^B_j||_{L^2}||\Delta z||_{L^2}+||\nabla z||_{L^2}\left(\int_Y|A-B|^2|\nabla\chi^\rho_j|^2\,dy\right)^{1/2}.
	\end{align*} By Young's inequality,
	\begin{align*}
		||\nabla z||_{L^2}&\leq C\left\{ \rho||\Delta\chi^B_j||_{L^2}+\left(\int_Y|A-B|^2|\nabla\chi^\rho_j|^2\,dy\right)^{1/2}\right\}.
	\end{align*}
	On the last term, we apply a form of Meyers estimate for the $L^p$ integrability of $\nabla\chi^\rho_j$ proved in~\cite[Page~7, Theorem~2.3]{niuCombinedEffectsHomogenization2020}. This fixes the choice of $q$. This finishes the proof of the estimate~\cref{twotwo}. The proof of~\cref{oneone} is similar and simpler and hence omitted.
\end{proof}

For every fixed $0\leq\rho<\infty$, the homogenized tensor for the operator $\mathcal{A}^{\rho,\epsilon}=\rho^2\Delta^2-\text{div}\,A\left(\frac{x}{\epsilon}\right)\nabla$ is given by 
\begin{align}
	\label{fixedhomtensor}
	A^{\rho,\text{hom}}\coloneqq \mathcal{M}_Y\left(A+A\nabla\chi^\rho\right)
\end{align}

\begin{definition}[Homogenized Tensor for $\mathcal{A}^{\kappa,\epsilon}$]\label{homtensor}\begin{equation}
	A^{\text{hom}}\coloneqq 
	\begin{cases}
	&\mathcal{M}_Y\left(A+A\nabla\chi^\theta\right)\text{ for }0<\theta<\infty\text{ where }\rho=\frac{\kappa}{\epsilon}\to \theta,\\
	&\mathcal{M}_Y\left(A+A\nabla\chi^0\right)\text{ when }\rho=\frac{\kappa}{\epsilon}\to 0,\\
	&\mathcal{M}_Y\left(A\right)\text{ when }\rho=\frac{\kappa}{\epsilon}\to \infty.
	\end{cases}
\end{equation}
\end{definition}

\section{Bloch Characterization of Homogenized Tensor}\label{BlochCharacterization}
In this section, we will give a new characterization of the homogenized tensor (see \cref{homtensor}), and corrector field~\cref{correctors} in terms of the first Bloch eigenvalue and eigenfunction. These characterizations are obtained by differentiating the Bloch spectral problem~\eqref{BlochEigenvalueProblem} with respect to the dual parameter $\eta$. Indeed, this is possible since we proved the analyticity of the first Bloch eigenvalue and eigenfunction with respect to $\eta$ in a neighbourhood of $\eta=0$ in~\cref{analytic}.  Moreover we will use the properties of the first Bloch eigenvalue to prove the stability of the homogenized tensor with respect to the limits $\rho\to 0$ and $\rho\to\infty$.

\subsection{Derivatives of Bloch eigenvalues and eigenfunctions}
We recall the Bloch eigenvalue problem for the operator $\mathcal{A}^\rho$ here:
\begin{align}\label{Blochproblem3}
\rho^2(\nabla+i\eta)^4\phi^\rho_1(y;\eta)-(\nabla+i\eta)\cdot A(y)(\nabla+i\eta)\phi^\rho_1(y;\eta)=\lambda^\rho_1(\eta)\phi^\rho_1(y;\eta).
\end{align}
We know that $\lambda^\rho_1(0)=0$. For $\eta\in Y^{'}$, recall that $\mathcal{A}^\rho(\eta)= \rho^2(\nabla+i\eta)^4-(\nabla+i\eta)\cdot A(y)(\nabla+i\eta)$. In this section, for notational convenience we will hide the dependence on $y$.

We shall normalize the average value of the first Bloch eigenfunction $\phi_1^\rho(\cdot;\eta)$ to be $(2\pi)^{-d/2}$, that is, \begin{align}\label{normalization}\mathcal{M}_Y(\phi_1^\rho(\cdot,\eta))=(2\pi)^{-d/2}\end{align} for all $\eta$ in the neighbourhood of analyticity. We shall use $\beta$ to denote a multiindex, such as $\beta=(\beta_1,\beta_2,\ldots,\beta_d)\in\mathbb{N}^d\cup\{0\}$ and $|\beta|\coloneqq |\beta_1|+|\beta_2|+\cdots+|\beta_d|$. We will use the shorthand $\partial^\beta_\eta$ to denote $\partial^\beta_\eta\coloneqq \frac{\partial^{\beta_1}}{\partial\eta_1^{\beta_1}}\cdots\frac{\partial^{\beta_d}}{\partial\eta_d^{\eta_d}}$. For simplicity, we will also use the notation $\partial^\beta_0 u= \frac{\partial^{\beta_1}u}{\partial\eta_1^{\beta_1}}\cdots\frac{\partial^{\beta_d}u}{\partial\eta_d^{\eta_d}}\bigg|_{\eta=0}$. Differentiating~\cref{normalization}, we obtain
\begin{align}\label{compatibility}
	\mathcal{M}_Y(\partial^\beta_0\phi_1^\rho)=0
\end{align} for all $|\beta|>0$. Later on, we will see this as the compatibility condition associated with the equation satisfied by $\partial^\beta_0\phi_1^\rho$. Denote by $\partial^\beta_0\mathcal{A}^\rho\coloneqq \frac{\partial^\beta\mathcal{A}}{\partial\eta^\beta}\bigg|_{\eta=0}$. Then, it holds true that 
\begin{align}
	\partial^\beta_\eta\mathcal{A}\equiv 0\text{ for all }|\beta|>4,
\end{align}
since $\mathcal{A}^\rho(\eta)$ is a fourth order polynomial in $\eta$. Direct calculation shows that
\begin{align}\label{diffoperators}
	\mathcal{A}^\rho(0)&= \rho^2\nabla^4-\nabla\cdot A(y)\nabla\nonumber\\
	\partial^{e_j}_0\mathcal{A}^\rho&=4i\rho^2e_j\cdot\nabla\nabla^2-ie_j\cdot A\nabla-i\nabla\cdot Ae_j\nonumber\\ 
	\partial^{e_j+e_k}_0\mathcal{A}^\rho&=-4\rho^2\delta_{jk}\nabla^2-8\rho^2\partial_{y_j}\partial_{y_k}+2 a_{jk}\nonumber\\ 
	\partial_0^{e_j+e_k+e_l}\mathcal{A}^\rho&=-8i\rho^2(\delta_{jk}\partial_{y_l}+\delta_{jl}\partial_{y_k}+\delta_{kl}\partial_{y_j})\nonumber\\ 
	\partial_0^{e_j+e_k+e_l+e_m}\mathcal{A}^\rho&=8\rho^2(\delta_{jk}\delta_{lm}+\delta_{jl}\delta_{km}+\delta_{jm}\delta_{kl}),
\end{align} where $e_j$ denotes the standard Euclidean unit vector with $1$ in the $j^{th}$ place and $0$ elsewhere.
Now, we are in a position to write down the differential equations satisfied by the derivatives of $\phi_1^\rho$ of all orders. To this end, we recall the Leibniz's formula for the derivatives of product of functions, viz., \begin{align}\label{leibniz}
	\partial^{\beta}(fg)=\sum_{\gamma\in\mathbb{N}^d\cup\{0\}}\binom{\beta}{\gamma}\,\partial^\gamma f\,\partial^{\beta-\gamma}g,
\end{align} where $\binom{\beta}{\gamma}=\binom{\beta_1}{\gamma_1}\cdots\binom{\beta_d}{\gamma_d}$ and the sum is always finite since $\binom{\beta_j}{\gamma_j}=0$ whenever $\beta_j<\gamma_j$.

\subsection*{\underline{Cell Problems for $\partial^\beta_0\phi_1^\rho$}} Differentiating~\cref{Blochproblem3} with respect to $\eta$ and applying~\cref{leibniz}, we obtain
\begin{align}\label{cellprobbeta1}
	\mathcal{A}^\rho(0)\partial^\beta_0\phi_1^\rho &+ \sum_{j=1}^d \beta_j \partial^{e_j}_0\mathcal{A}^\rho\,\partial^{\beta-e_j}_0\phi_1^\rho + \sum_{j,k} \binom{\beta}{e_j+e_k} \partial^{e_j+e_k}_0\mathcal{A}^\rho\,\partial^{\beta-e_j-e_k}_0\phi_1^\rho\nonumber\\ 
	&+\sum_{j,k,l} \binom{\beta}{e_j+e_k+e_l} \partial^{e_j+e_k+e_l}_0\mathcal{A}^\rho\,\partial^{\beta-e_j-e_k-e_l}_0\phi_1^\rho\nonumber\\ 
	&+\sum_{j,k,l,m} \binom{\beta}{e_j+e_k+e_l+e_m} \partial^{e_j+e_k+e_l+e_m}_0\mathcal{A}^\rho\,\partial^{\beta-e_j-e_k-e_l-e_m}_0\phi_1^\rho\nonumber\\ 
	&= \sum_{\gamma\in\mathbb{N}^d\cup\{0\}}\binom{\beta}{\gamma}\partial^\gamma_0\lambda_1^\rho\,\partial^{\beta-\gamma}_0\phi_1^\rho.
\end{align}
Substituting~\cref{diffoperators} in~\cref{cellprobbeta1}, we obtain
\begin{align}\label{cellprobbeta2}
	\left(\rho^2\nabla^4-\nabla\cdot A(y)\nabla\right)\partial^\beta_0\phi_1^\rho &= \sum_{j=1}^d \beta_j \left(-4i\rho^2e_j\cdot\nabla\nabla^2+ie_j\cdot A\nabla+i\nabla\cdot Ae_j\right)\,\partial^{\beta-e_j}_0\phi_1^\rho \nonumber\\ 
	- \sum_{j,k} \binom{\beta}{e_j+e_k}& \left(-4\rho^2\delta_{jk}\nabla^2-8\rho^2\partial_{y_j}\partial_{y_k}+2 a_{jk}\right)\,\partial^{\beta-e_j-e_k}_0\phi_1^\rho\nonumber\\ 
	+\sum_{j,k,l} \binom{\beta}{e_j+e_k+e_l} &\left(8i\rho^2(\delta_{jk}\partial_{y_l}+\delta_{jl}\partial_{y_k}+\delta_{kl}\partial_{y_j})\right)\,\partial^{\beta-e_j-e_k-e_l}_0\phi_1^\rho\nonumber\\ 
	-\sum_{j,k,l,m} \binom{\beta}{e_j+e_k+e_l+e_m} &\left(8\rho^2(\delta_{jk}\delta_{lm}+\delta_{jl}\delta_{km}+\delta_{jm}\delta_{kl})\right)\,\partial^{\beta-e_j-e_k-e_l-e_m}_0\phi_1^\rho\nonumber\\ 
	+ \sum_{\gamma\in\mathbb{N}^d}\binom{\beta}{\gamma}&\,\partial^\gamma_0\lambda_1^\rho\,\partial^{\beta-\gamma}_0\phi_1^\rho.
\end{align}

\subsection*{\underline{Expression for $\partial^\beta_0\lambda_1^\rho$}} It is easy to see that $\lambda_1^\rho(0)=0$ since $0$ is the first eigenvalue of the operator $\mathcal{A}^\rho(0)$. On the other hand, $\lambda_1^\rho$ is an even function of $\eta$ since $a^\rho[-\eta](\phi,\phi)=a^\rho[\eta](\overline{\phi},\overline{\phi})$ and $\overline{\phi}\in H^2_\sharp(Y)$ if and only if $\phi\in H^2_\sharp(Y)$ (see also~\cite[Page~41, Lemma~4.4]{dohnalDispersiveHomogenizedModels2015}) .

Rearranging~\cref{cellprobbeta2}, we get
\begin{align}\label{lambdadiffs1}
	\partial^\beta_0\lambda_1^\rho\phi_1^\rho(0)&=\left(\rho^2\nabla^4-\nabla\cdot A(y)\nabla\right)\partial^\beta_0\phi_1^\rho\nonumber\\ 
	+ \sum_{j=1}^d \beta_j & \left(-4i\rho^2e_j\cdot\nabla\nabla^2+ie_j\cdot A\nabla+i\nabla\cdot Ae_j\right)\,\partial^{\beta-e_j}_0\phi_1^\rho \nonumber\\ 
	- \sum_{j,k} \binom{\beta}{e_j+e_k}& \left(-4\rho^2\delta_{jk}\nabla^2-8\rho^2\partial_{y_j}\partial_{y_k}+2 a_{jk}\right)\,\partial^{\beta-e_j-e_k}_0\phi_1^\rho\nonumber\\ 
	+\sum_{j,k,l} \binom{\beta}{e_j+e_k+e_l} &\left(8i\rho^2(\delta_{jk}\partial_{y_l}+\delta_{jl}\partial_{y_k}+\delta_{kl}\partial_{y_j})\right)\,\partial^{\beta-e_j-e_k-e_l}_0\phi_1^\rho\nonumber\\ 
	-\sum_{j,k,l,m} \binom{\beta}{e_j+e_k+e_l+e_m} &\left(8\rho^2(\delta_{jk}\delta_{lm}+\delta_{jl}\delta_{km}+\delta_{jm}\delta_{kl})\right)\,\partial^{\beta-e_j-e_k-e_l-e_m}_0\phi_1^\rho\nonumber\\ 
	+ \sum_{\stackrel{\gamma\in\mathbb{N}^d}{\gamma\neq\beta}}\binom{\beta}{\gamma}&\,\partial^\gamma_0\lambda_1^\rho\,\partial^{\beta-\gamma}_0\phi_1^\rho.
\end{align}

Integrating~\cref{lambdadiffs1} over $Y$, using~\cref{normalization} and~\cref{compatibility} and the fact that integrals over $Y$ of derivatives of periodic functions vanish due to Green's identity, we obtain the following formula for the derivatives of the first Bloch eigenvalue at $\eta=0$:

\begin{align}\label{lambdadiffs2}
 \partial^\beta_0\lambda_1^\rho =
 \begin{cases}
 & 2 \sum_{j,k} \binom{\beta}{e_j+e_k}\mathcal{M}_Y\left(a_{jk}\partial^{\beta-e_j-e_k}_0\phi^\rho_1\right)-i\sum_{j=1}^d\beta_j\mathcal{M}_Y\left( e_j\cdot A\nabla\partial^{\beta-e_j}_0\phi_1^\rho \right)\text{ when }|\beta|\neq 4.\\
 & 2 \sum_{j,k} \binom{\beta}{e_j+e_k}\mathcal{M}_Y\left(a_{jk}\partial^{\beta-e_j-e_k}_0\phi^\rho_1\right)-i\sum_{j=1}^d\beta_j\mathcal{M}_Y\left( e_j\cdot A\nabla\partial^{\beta-e_j}_0\phi_1^\rho \right)\\
 &\qquad\qquad -\sum_{j,k,l,m} 8\rho^2\left(\delta_{jk}\delta_{lm}+\delta_{jl}\delta_{km}+\delta_{jm}\delta_{kl}\right)\,\phi_1^\rho(0)\text{ when }|\beta|=4.
 \end{cases}
\end{align}

We specialize to $\beta=e_l$ in~\cref{lambdadiffs2} to get $\displaystyle\frac{\partial{\lambda}^\rho_1}{\partial\eta_l}(0)=0$ for all $l=1,2,\ldots,d$.

On the other hand, if we set $\beta=e_l$ in~\cref{cellprobbeta2}, we obtain
\begin{align*}
\left(-\nabla \cdot A(y)\nabla+\rho^2\Delta^2\right)\frac{\partial {\phi^\rho_1}}{\partial \eta_l}(0)=\nabla\cdot A(y)e_l i{\phi^\rho_1}(0).
\end{align*} 

Comparing with~\cref{correctors}, we conclude that $\displaystyle\chi^\rho_l-\frac{1}{i{\phi^\rho}_1(0)}\frac{\partial\phi^\rho_1}{\partial\eta_l}(0)$ is a constant.

We also specialize to $\beta=e_l+e_k$ in~\cref{lambdadiffs2} to get
\begin{align}\label{homogenizedtensor2}
\frac{1}{2}\frac{\partial^2 {\lambda}^\rho_1}{\partial\eta_k\partial\eta_l}(0)=\frac{1}{|Y|}\int_Y \left(e_k\cdot Ae_l+\frac{1}{2}e_k\cdot A \nabla \chi_l^\rho+\frac{1}{2}e_l\cdot A \nabla \chi_k^\rho\right)\,dy.
\end{align} On comparing~\cref{homogenizedtensor2} with~\cref{fixedhomtensor}, we obtain the following theorem:

\begin{theorem}\label{Hessian}
	The first Bloch eigenvalue and eigenfunction satisfy:
	\begin{enumerate}
		\item $\lambda^\rho_1(0)=0$.
		\item The eigenvalue $\lambda^\rho_1(\eta)$ has a critical point at $\eta=0$, i.e., \begin{align}\frac{\partial \lambda^\rho_1}{\partial \eta_l}(0)=0, \forall\, l=1,2,\ldots,d.\end{align}
		\item For $l=1,2,\ldots,d,$ the derivative of the eigenvector $(\partial \phi_1^\rho/\partial\eta_l)(0)$ satisfies: 
		
		$(\partial \phi_1^\rho/\partial\eta_l)(y;0)-i\phi^\rho_1(y;0)\chi^{\rho}_l(y)$ is a constant in $y$ where $\chi^\rho_l$ solves the cell problem~\eqref{correctors}.
		\item The Hessian of the first Bloch eigenvalue at $\eta=0$ is twice the homogenized matrix $A^{\rho,\text{hom}}$ as defined in~\eqref{fixedhomtensor}, i.e.,
		\begin{align}\label{identificationofhomo}
		\frac{1}{2}\frac{\partial^2\lambda^\rho_1}{\partial\eta_k\partial\eta_l}(0)=e_k\cdot A^{\rho,\text{hom}}e_l.		
		\end{align}
	\end{enumerate}
\end{theorem}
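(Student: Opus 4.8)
Every assertion follows from the recursive identities \cref{cellprobbeta2} and \cref{lambdadiffs2} derived above, evaluated at $|\beta|=1$ and $|\beta|=2$, together with the facts about $\mathcal{A}^\rho(0)$ established in the proof of \cref{analytic}, in particular that $\ker\mathcal{A}^\rho(0)$ consists precisely of the constants. Claim (1) has already been noted: constants lie in $\ker\mathcal{A}^\rho(0)$ and the form $a^\rho[0]$ is nonnegative, so $0$ is the bottom of the spectrum, i.e.\ $\lambda_1^\rho(0)=0$.

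For (2) and (3) I would specialize to $\beta=e_l$. In \cref{lambdadiffs2} every binomial coefficient $\binom{e_l}{e_j+e_k}$ vanishes, leaving only $-i\,\mathcal{M}_Y\!\big(e_l\cdot A\nabla\phi_1^\rho(\cdot;0)\big)$; since $\phi_1^\rho(\cdot;0)$ is a nonzero constant by \cref{analytic} and \cref{normalization}, this is zero, giving $\frac{\partial\lambda_1^\rho}{\partial\eta_l}(0)=0$ (alternatively, use that $\lambda_1^\rho$ is even in $\eta$). Putting $\beta=e_l$ in \cref{cellprobbeta2}, all terms carrying a derivative of the constant $\phi_1^\rho(\cdot;0)$ drop, all degree-$\ge2$ binomial coefficients vanish, and the surviving eigenvalue term equals $\frac{\partial\lambda_1^\rho}{\partial\eta_l}(0)\,\phi_1^\rho(0)=0$; what remains is
\[
\big(\rho^2\nabla^4-\nabla\cdot A(y)\nabla\big)\partial^{e_l}_0\phi_1^\rho = i\,\phi_1^\rho(0)\,\nabla\cdot\!\big(A(y)e_l\big),
\]
which is $i\phi_1^\rho(0)$ times the equation for $\chi_l^\rho$ in \cref{correctors}. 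Hence $\partial^{e_l}_0\phi_1^\rho-i\phi_1^\rho(0)\chi_l^\rho$ lies in $\ker\mathcal{A}^\rho(0)$, so it is constant in $y$, which is (3).

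For (4) I would specialize \cref{lambdadiffs2} to $\beta=e_l+e_k$ (the $|\beta|\ne4$ branch). The only derivatives of $\phi_1^\rho$ that appear are $\partial^0_0\phi_1^\rho=\phi_1^\rho(0)$ and $\partial^{e_l}_0\phi_1^\rho,\partial^{e_k}_0\phi_1^\rho$; inserting the expressions from (3), using $\nabla\phi_1^\rho(\cdot;0)=0$, and cancelling the common factor $\phi_1^\rho(0)$ gives exactly \cref{homogenizedtensor2},
\[
\tfrac12\,\frac{\partial^2\lambda_1^\rho}{\partial\eta_k\partial\eta_l}(0)=\mathcal{M}_Y\!\Big(e_k\cdot Ae_l+\tfrac12\,e_k\cdot A\nabla\chi_l^\rho+\tfrac12\,e_l\cdot A\nabla\chi_k^\rho\Big).
\]
It then remains to identify the right-hand side with $e_k\cdot A^{\rho,\text{hom}}e_l$: testing the weak form of the $\chi_l^\rho$-equation in \cref{correctors} against $\chi_k^\rho$ and conversely, and using the symmetry of $A$ (hypothesis~A3) on the resulting first-order and biharmonic terms, yields $\mathcal{M}_Y(e_k\cdot A\nabla\chi_l^\rho)=\mathcal{M}_Y(e_l\cdot A\nabla\chi_k^\rho)$, so the right-hand side collapses to $\mathcal{M}_Y(e_k\cdot Ae_l+e_k\cdot A\nabla\chi_l^\rho)=e_k\cdot A^{\rho,\text{hom}}e_l$ as in \cref{fixedhomtensor}, which is \cref{identificationofhomo}.

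The manipulations are routine; the points requiring care are the combinatorial bookkeeping in the Leibniz expansion — confirming which multi-index binomial coefficients survive at low order and that every term built from a derivative of the \emph{constant} $\phi_1^\rho(\cdot;0)$ disappears — and the two uses of $\ker\mathcal{A}^\rho(0)=\{\text{constants}\}$: to pass from ``solves the same elliptic equation'' to ``differs by a constant'' in (3), and, through the symmetry of $A$, to identify the symmetrized corrector average with $A^{\rho,\text{hom}}$ in (4).
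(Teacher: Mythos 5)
Your proposal is correct and follows essentially the same route as the paper: specializing the differentiated identities \cref{cellprobbeta2} and \cref{lambdadiffs2} to $\beta=e_l$ and $\beta=e_l+e_k$, using the constancy of $\phi_1^\rho(\cdot;0)$ and $\ker\mathcal{A}^\rho(0)=\{\text{constants}\}$, and comparing with \cref{correctors} and \cref{fixedhomtensor}. The only difference is that you spell out the symmetrization step $\mathcal{M}_Y(e_k\cdot A\nabla\chi_l^\rho)=\mathcal{M}_Y(e_l\cdot A\nabla\chi_k^\rho)$ via the weak form of the cell problem, which the paper leaves implicit in its comparison of \cref{homogenizedtensor2} with \cref{fixedhomtensor}.
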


\subsection{Stability of homogenized tensor}

Now, we will prove the stability of homogenized tensor in the limits $\rho\to 0$ and $\rho\to\infty$.

\begin{lemma}
	\label{uniformconvbloch}
	Let $0\leq\theta<\infty$ such that $\rho\downarrow\theta$ then $\lambda^\rho_1(\eta)\to\lambda^\theta_1(\eta)$ uniformly on compact sets K contained in their common domain of definition.
\end{lemma}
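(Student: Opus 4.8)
The plan is to exploit the variational structure, namely the min–max characterization of $\lambda^\rho_1(\eta)$ together with the continuity of the bilinear form $a^\rho[\eta]$ in $\rho$. First I would fix a compact set $K$ in the common domain of analyticity (which, by \cref{independentnbd}, can be taken independent of $\rho$). Since $\lambda^\rho_1(\eta) = \min_{\substack{v\in H^2_\sharp(Y)\\ v\neq 0}} a^\rho[\eta](v,v)/\|v\|^2_{L^2_\sharp(Y)}$, and the numerator $a^\rho[\eta](v,v) = \int_Y A(\nabla+i\eta)v\cdot\overline{(\nabla+i\eta)v}\,dy + \rho^2\int_Y |(\nabla+i\eta)^2 v|^2\,dy$ depends on $\rho$ only through the coefficient $\rho^2$ multiplying a fixed (non-negative) quadratic functional, I would first establish the easy one-sided bound. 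For $\rho\geq\theta$, monotonicity of $\rho\mapsto a^\rho[\eta](v,v)$ gives $\lambda^\rho_1(\eta)\geq\lambda^\theta_1(\eta)$ pointwise; conversely, plugging the (normalized) minimizer $\phi^\rho_1(\cdot,\eta)$ for $\lambda^\rho_1(\eta)$ into the Rayleigh quotient for $a^\theta[\eta]$ yields $\lambda^\theta_1(\eta)\leq a^\theta[\eta](\phi^\rho_1,\phi^\rho_1)\leq a^\rho[\eta](\phi^\rho_1,\phi^\rho_1)=\lambda^\rho_1(\eta)$ once we drop the non-negative biharmonic term. So $0\leq\lambda^\rho_1(\eta)-\lambda^\theta_1(\eta)$ for $\rho\geq\theta$, and the task reduces to an upper bound on this difference that is uniform on $K$ and vanishes as $\rho\downarrow\theta$.

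For the upper bound I would test the Rayleigh quotient for $\lambda^\rho_1(\eta)$ with the minimizer $\phi^\theta_1(\cdot,\eta)$ of $\lambda^\theta_1(\eta)$, obtaining
\begin{align*}
\lambda^\rho_1(\eta) \leq \frac{a^\theta[\eta](\phi^\theta_1,\phi^\theta_1) + (\rho^2-\theta^2)\int_Y |(\nabla+i\eta)^2\phi^\theta_1|^2\,dy}{\|\phi^\theta_1\|^2_{L^2_\sharp(Y)}} = \lambda^\theta_1(\eta) + (\rho^2-\theta^2)\,\frac{\int_Y |(\nabla+i\eta)^2\phi^\theta_1|^2\,dy}{\|\phi^\theta_1\|^2_{L^2_\sharp(Y)}}.
\end{align*}
Hence $0\leq\lambda^\rho_1(\eta)-\lambda^\theta_1(\eta)\leq (\rho^2-\theta^2)\,C_K$, where $C_K \coloneqq \sup_{\eta\in K}\|(\nabla+i\eta)^2\phi^\theta_1(\cdot,\eta)\|^2_{L^2_\sharp(Y)}/\|\phi^\theta_1(\cdot,\eta)\|^2_{L^2_\sharp(Y)}$. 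The right-hand side is independent of $\rho$ and tends to $0$ as $\rho\downarrow\theta$, which gives the claimed uniform convergence — \emph{provided} $C_K<\infty$. Finiteness of $C_K$ is exactly the point where the regularity results of \cref{regularityBloch} (and the $\rho$-independent neighbourhood of \cref{independentnbd}) are needed: for $\theta>0$ the fiber operator is genuinely fourth order, so the minimizer $\phi^\theta_1(\cdot,\eta)$ lies in $H^4_\sharp(Y)$ with $\eta\mapsto\phi^\theta_1(\cdot,\eta)$ continuous (indeed analytic) into $H^2_\sharp(Y)$, and a Gårding/elliptic-regularity bootstrap (as in \cref{coercivityfortranslate}) controls the $H^2$-seminorm uniformly over the compact $\eta$-set $K$. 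For $\theta=0$ one must be slightly more careful, since $\phi^0_1(\cdot,\eta)$ need not be in $H^2$; there I would instead use the convexity/monotonicity argument from the other side, or approximate $\phi^0_1$ by a smooth periodic function (using density of $C^\infty_\sharp(Y)$ in the form domain relevant to $a^0[\eta]$) to make the test-function computation rigorous, at the cost of an extra $o(1)$ term that can be absorbed.

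The main obstacle, then, is the case $\theta = 0$ together with the passage from pointwise to uniform convergence. Pointwise convergence for each fixed $\eta$ is straightforward from the sandwich above; to upgrade to uniformity on $K$ I would either (a) invoke equicontinuity — the Lipschitz bound \cref{lipschitzbound}, $|\lambda^\rho_m(\eta)-\lambda^\rho_m(\eta')|\leq C(\mu_m+\rho^2\nu_m)|\eta-\eta'|$, is uniform in $\rho$ on $\rho\downarrow\theta$ since $\rho$ stays bounded, so the family $\{\lambda^\rho_1\}$ is uniformly Lipschitz on $K$; combined with pointwise convergence and compactness of $K$ this forces uniform convergence by a standard Arzelà–Ascoli-type argument — or (b) simply note that the error bound $(\rho^2-\theta^2)C_K$ derived above is already uniform in $\eta\in K$, which settles it directly once $C_K<\infty$ is secured. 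I expect route (a) via the uniform Lipschitz estimate to be the cleanest, as it sidesteps the need to control $\phi^0_1$ in $H^2$ for the $\theta=0$ endpoint: one only needs the pointwise limit there, and the error estimate with finite $C_K$ suffices whenever $\theta>0$.
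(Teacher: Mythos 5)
Your argument is correct, and its pointwise core is the same as the paper's: both exploit the splitting $a^\rho[\eta](u,u)=a^\theta[\eta](u,u)+(\rho^2-\theta^2)\int_Y|(\nabla+i\eta)^2u|^2\,dy$, get $\lambda^\rho_1\geq\lambda^\theta_1$ for $\rho\geq\theta$ by monotonicity, and bound the difference from above by testing with (an approximation of) the minimizer for $\theta$. Where you genuinely diverge is the upgrade from pointwise to uniform convergence: the paper establishes local uniform boundedness of $\{\lambda^\rho_1\}$ via \cref{lipschitzbound} and then invokes Montel/Vitali for the family of functions analytic on the $\rho$-independent neighbourhood of \cref{independentnbd}, whereas you use either the explicit $\eta$-uniform error bound $(\rho^2-\theta^2)C_K$ or equicontinuity (the $\rho$-uniform Lipschitz estimate \cref{lipschitzbound}) combined with pointwise convergence and compactness of $K$. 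Both mechanisms are sound; the paper's normal-family route buys more than the lemma itself, namely uniform convergence of all $\eta$-derivatives by Weierstrass' theorem, which is precisely what is used immediately afterwards in \cref{stabilityhomtensor} to pass to the limit in the Hessian, while your route proves only the stated lemma (though the Weierstrass step could still be appended, since analyticity and local uniform boundedness are available). Two further remarks: your Rayleigh-quotient bound $\lambda^\rho_1(\eta)\leq\lambda^\theta_1(\eta)+(\rho^2-\theta^2)\,\|(\nabla+i\eta)^2\phi^\theta_1\|^2_{L^2_\sharp(Y)}/\|\phi^\theta_1\|^2_{L^2_\sharp(Y)}$ is in fact the rigorous form of this step --- the paper's written inequality with the first Bloch eigenvalue $\vartheta_1(\eta)$ of the bilaplacian does not follow from the minmax principle alone, since the two forms have different minimizers --- so your extra care here, including the density/approximation argument at $\theta=0$ where $\phi^0_1(\cdot,\eta)$ is only in $H^1_\sharp(Y)$, is warranted rather than excessive; and your passing claim that $\phi^\theta_1(\cdot,\eta)\in H^4_\sharp(Y)$ overstates the regularity available for merely bounded measurable $A$ (one gets $H^3_\sharp(Y)$, as for the correctors in \cref{correctors}), but this is immaterial because only an $H^2_\sharp(Y)$-bound, uniform on $K$ by the analyticity of \cref{analytic}, is needed to ensure $C_K<\infty$.
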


\begin{proof} The fact that the first Bloch eigenvalues $\{\lambda^\rho_1(\eta)\}_{\rho\geq 0}$ are analytic on a common domain in $Y^{'}$ has been proved in~\cref{independentnbd}. Since $\rho\downarrow\theta$ where $\theta$ is finite, the sequence $\rho$ is bounded, that is, $0<\rho < \rho_0$ for some $0<\rho_0<\infty$. We will prove that the family $\{\lambda^\rho_1(\eta)\}_{0\leq \rho<\rho_0}$ is locally uniformly bounded, that is, for every compact set $K$, 
		\begin{align*}
			|\lambda^\rho_1(\eta)|\leq |\lambda^\rho_1(\eta)-\lambda^\rho_1(0|\stackrel{\cref{lipschitzbound}}{\leq} C(\mu_1+\nu_1\rho^2)|\eta|< C^{'}\text{ for all }\eta\in K, 
		\end{align*} where $C^{'}$ is independent of $\rho$ and $\eta$. It is an easy consequence of Montel's Theorem~\cite[Page~9, Prop.~7]{narasimhanSeveralComplexVariables1971} that pointwise convergence implies uniform convergence for locally uniformly bounded sequences. We will show that $\lambda^\rho_1(\eta)$ converges pointwise to $\lambda^{\theta}_1(\eta)$. Applying minmax characterization to the form below
		\begin{align*}
			a^{\rho}[\eta](u,u) &= \int_Y A (\nabla+i\eta)u\cdot\overline{(\nabla+i\eta)u}\,dy+\rho^2\int_Y (\nabla+i\eta)^2 u\overline{(\nabla+i\eta)^2 u}\,dy\\
			&=a^\theta[\eta](u,u)+(\rho^2-\theta^2)\int_Y (\nabla+i\eta)^2 u\overline{(\nabla+i\eta)^2 u}\,dy
		\end{align*} we obtain the following inequality:
		\begin{align*}
			\lambda^\rho_1(\eta)-\lambda^\theta_1(\eta)\leq(\rho^2-\theta^2)\,\vartheta_1(\eta),
		\end{align*} where $\vartheta_1(\eta)$ is the first Bloch eigenvalue of the bilaplacian and $\lambda^\theta_1(\eta)$ is the first Bloch eigenvalue of the operator $-\text{div}(A\nabla)+\theta^2\Delta^2$ for $\theta\in [0,\infty)$.
		On the other hand, we also have
		\begin{align*}
			a^{\rho}[\eta](u,u) \geq a^{\theta}[\eta](u,u)\text{ for }\rho\geq\theta,
		\end{align*} so that an application of minmax characterization yields:
		\begin{align*}
			\lambda^\rho_1(\eta)\geq\lambda^\theta_1(\eta)\text{ for }\rho\geq \theta.
		\end{align*} Thus, we obtain
		\begin{align*}
			0\leq \lambda^\rho_1(\eta)-\lambda^\theta_1(\eta)\leq (\rho^2-\theta^2)\,\vartheta_1(\eta)\text{ for }\rho\geq \theta.
		\end{align*} As a consequence, for each $\eta\in Y^{'}$, $\lambda^\rho_1(\eta)\to\lambda^\theta_1(\eta)$ as $\rho\downarrow \theta$.
\end{proof}

\begin{theorem}
	\label{stabilityhomtensor}
	Let $0\leq\theta\leq\infty$ such that $\rho\to\theta$ then $A^{\rho,\text{hom}}\to A^{hom}$, with $A^{hom}$ as in~\cref{homtensor}.
\end{theorem}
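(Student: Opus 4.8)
The plan is to split into the three regimes of \cref{homtensor}. In every case one starts from \cref{fixedhomtensor}, which gives $A^{\rho,\text{hom}}=\mathcal{M}_Y(A)+\mathcal{M}_Y(A\nabla\chi^\rho)$; since $A\in L^\infty$, it is enough to control $\nabla\chi^\rho$ in $L^2_\sharp(Y)$, and convergence of the averages then follows.

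\textbf{The regimes $0\le\theta<\infty$.} Here I would argue through the first Bloch eigenvalue. By \cref{Hessian}(4),
\[
e_k\cdot A^{\rho,\text{hom}}e_l=\frac{1}{2}\frac{\partial^2\lambda_1^\rho}{\partial\eta_k\partial\eta_l}(0)\qquad\text{for every }\rho\ge 0,
\]
so it suffices to show that the Hessians of $\lambda_1^\rho$ at $\eta=0$ converge to that of $\lambda_1^\theta$. By \cref{independentnbd} the family $\{\lambda_1^\rho\}$ is analytic on a common neighbourhood of $0$, and the estimate used in the proof of \cref{uniformconvbloch} shows it is locally uniformly bounded there; \cref{uniformconvbloch} (together with its two-sided counterpart for $\rho\uparrow\theta$, obtained from $\lambda_1^\rho(\eta)\le\lambda_1^\theta(\eta)$ when $\rho\le\theta$) gives $\lambda_1^\rho\to\lambda_1^\theta$ locally uniformly. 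Cauchy's integral formula on the holomorphic extension then upgrades this to convergence of all $\eta$-derivatives at $0$, in particular of the Hessians, so $A^{\rho,\text{hom}}\to A^{\theta,\text{hom}}$, which is precisely $A^{\text{hom}}$ by \cref{homtensor}. A purely PDE route is also available: for $0<\theta<\infty$, \cref{estimateforposrho} with $\rho_1=\rho$, $\rho_2=\theta$ gives $\|\nabla\chi^\rho-\nabla\chi^\theta\|_{L^2(Y)}\le C\,|1-(\rho/\theta)^2|\to 0$; for $\theta=0$, \cref{estimateforzerorho} gives, for each $\varkappa>0$ and the associated smooth matrix $B$,
\[
\|\nabla\chi^\rho-\nabla\chi^0\|_{L^2(Y)}\le C\rho\,\|\chi^B\|_{H^2(Y)}+2C\varkappa
\]
after combining \cref{oneone} and \cref{twotwo} with the triangle inequality, and one concludes by letting $\rho\to 0$ with $\varkappa$ fixed and then $\varkappa\to 0$.

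\textbf{The regime $\theta=\infty$.} Now $\lambda_1^\rho(\eta)$ does not converge ($\lambda_1^\rho(0)=0$ while $\lambda_1^\rho(\eta)\to\infty$ for $\eta\ne 0$), so the Bloch route is unavailable and I would argue directly from the cell problem. By \cref{energyestimate1} we have $\|\chi^\rho\|_{L^2(Y)}\le C$ and $\|\Delta\chi^\rho\|_{L^2(Y)}\le C/\rho$, hence, integrating by parts on the torus,
\[
\|\nabla\chi^\rho\|_{L^2(Y)}^2=-\int_Y\chi^\rho\,\Delta\chi^\rho\,dy\le\|\chi^\rho\|_{L^2(Y)}\,\|\Delta\chi^\rho\|_{L^2(Y)}\le\frac{C}{\rho}\longrightarrow 0
\]
as $\rho\to\infty$. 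Therefore $\mathcal{M}_Y(A\nabla\chi^\rho)\to 0$ and $A^{\rho,\text{hom}}\to\mathcal{M}_Y(A)=A^{\text{hom}}$.

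\textbf{Main obstacle.} The case $0<\theta<\infty$ is a genuine continuity statement and is essentially immediate once \cref{Hessian} and the analyticity results are in hand. The delicate points are the two limiting regimes: for $\theta=0$ the smooth approximant $\chi^B$ in \cref{estimateforzerorho} is produced through a Meyers-type estimate, so $\|\chi^B\|_{H^2(Y)}$ is finite but not uniform as $\varkappa\to 0$, and the two limits must be taken in the correct order; and the regime $\theta=\infty$ lies outside the Bloch framework used elsewhere in the section and has to be treated by the separate energy estimate above.
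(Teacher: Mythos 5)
Your proposal is correct and, for the regime $0\le\theta<\infty$, follows essentially the paper's own route: the Hessian characterization in \cref{Hessian}, the locally uniform convergence of \cref{uniformconvbloch}, and convergence of the $\eta$-derivatives at $0$ (the paper cites the theorem of Weierstrass where you invoke Cauchy's integral formula; this is the same mechanism, resting on the $\rho$-independent neighbourhood of \cref{independentnbd}). Two smaller divergences are worth recording. For $\theta=\infty$ the paper bounds $\|\nabla\chi^\rho\|_{L^2}\le C\rho^{-2}$ using Poincar\'e's inequality together with \cref{energyestimate2}, whereas you interpolate from \cref{energyestimate1} via $\|\nabla\chi^\rho\|_{L^2}^2=-\int_Y\chi^\rho\,\Delta\chi^\rho\,dy\le C\rho^{-1}$; your rate is weaker but entirely sufficient for the qualitative limit, and both arguments are of the same nature (a direct estimate forcing $\mathcal{M}_Y(A\nabla\chi^\rho)\to0$). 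Your remark that \cref{uniformconvbloch} is stated only for $\rho\downarrow\theta$ and needs the symmetric inequality $\lambda_1^\rho(\eta)\le\lambda_1^\theta(\eta)$ for $\rho\le\theta$ is a fair point which the paper's proof glosses over, and your alternative PDE route through \cref{estimateforposrho} and \cref{estimateforzerorho} (with the limits taken in the order $\rho\to0$ first, then $\varkappa\to0$) is a valid independent argument not used in the paper, which instead reserves those lemmas for the homogenization proof in Section 7.
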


\begin{proof}\leavevmode
	\begin{itemize}
		\item[]{\textbf{Case 1. $\theta\in [0,\infty)$}}: By the characterization in~\cref{Hessian}, $A^{\rho,hom}=\frac{1}{2}\nabla_\eta^2\lambda^\rho_1(0)$. In~\cref{uniformconvbloch}, we have proved that $\lambda^\rho_1$ conveges uniformly to $\lambda_1^\theta$ when $\rho\downarrow\theta$ for $\theta\in[0,\infty)$. By Theorem of Weierstrass~\cite[Page~7, Prop.~5]{narasimhanSeveralComplexVariables1971}, derivatives of all orders of $\lambda^\rho_1(\eta)$ converge uniformly on compact sets to corresponding derivatives of $\lambda^\theta_1(\eta)$. In particular, $\nabla_\eta^2\lambda^\rho_1$ converges uniformly on compact sets to $\nabla^2\lambda^\theta_1$. As a consequence, 
		\begin{align*}
			A^{\rho,hom}=\frac{1}{2}\nabla_\eta^2\lambda^\rho_1(0)\to\frac{1}{2}\nabla_\eta^2\lambda^\theta_1(0)=A^{hom}\text{ as }\rho\downarrow\theta\in[0,\infty).
		\end{align*}
		\item[]{\textbf{Case 2. $\theta=\infty$}:}  Observe that
		\begin{align*}
			|A^{\rho,hom}-\mathcal{M}_Y(A)|=\left|\int_Y A(y)\nabla\chi^\rho(y)\,dy\right|\leq C||\nabla\chi^\rho||_{L^2_\sharp(Y)}\leq \frac{C}{\rho^2},
		\end{align*} where the last inequality follows from Poincar\'e inequality and~\eqref{energyestimate2}. Therefore, as $\rho\to\infty$,
		\begin{align*}
			A^{\rho,hom}\to\mathcal{M}_Y(A)=A^{hom}\text{ as }\rho\uparrow\infty.
		\end{align*}
	\end{itemize}
\end{proof}

\begin{remark}\label{boundednessofeigfuncforlowrho}
	As a consequence of~\cref{uniformconvbloch} and the discussion in Case $1$ of~\cref{stabilityhomtensor}, for any $R>0$, and $0\leq \rho<R$, the first Bloch eigenvalue and all its derivatives are bounded uniformly in their domain of analyticity independent of $\rho$. We may also conclude from the same that the corresponding (suitably normalized) Bloch eigenfunction and all their derivatives with respect to dual parameter are bounded in $H^1_\sharp$ independent of $\rho$. For the first Bloch eigenfunction, the boundedness is a consequence of the following calculation, which is a consequence of~\cref{coercivity}:
	\begin{align*}
		|\lambda_1^\rho(\eta)|+C_*\geq \lambda_1^\rho(\eta) + C_* = a^\rho[\eta](\phi_1^\rho(\eta),\phi_1^\rho(\eta))+C_*\geq \frac{\alpha}{2}||\phi_1^\rho(\eta)||^2_{H^1_\sharp(Y)}.
	\end{align*} Recall that the number $C_*$ is explicitly given in~\cref{cstar}. The boundedness in $H^1_\sharp(Y)$ of the derivatives of the first Bloch eigenfunction in the dual parameter is a consequence of its analyticity~\cite[Page~5, Prop.~3]{narasimhanSeveralComplexVariables1971}. This precludes the case $\rho\to\infty$ which is covered in~\cref{higherestimates}.
\end{remark}

We end this section by finding boundedness estimates for higher order derivatives of the first Bloch eigenvalue and eigenfunction in the dual parameter in the regime $\rho\to\infty$. 

\begin{theorem}\label{higherestimates}
	For $1\leq \rho<\infty$, 
	\begin{enumerate}
		\item $||\partial_0^{e_j}\phi^\rho_1||_{H^1}\lesssim\frac{1}{\rho^2}$, $\partial^{e_j}_0\lambda^\rho_1=0$.
		\item $||\partial_0^{e_j+e_k}\phi^\rho_1||_{H^1}\lesssim\frac{1}{\rho^2}$, $\left|\partial^{e_j+e_k}_0\lambda^\rho_1-\mathcal{M}_Y(e_j\cdot Ae_k)\right|\lesssim\frac{1}{\rho^2}$.
		\item $||\partial_0^{e_j+e_k+e_l}\phi^\rho_1||_{H^1}\lesssim\frac{1}{\rho^2}$, $\partial^{e_j+e_k+e_l}_0\lambda^\rho_1=0$.
		\item $||\partial_0^{\beta}\phi^\rho_1||_{H^1}\lesssim 1$ for all $|\beta|\geq 4$.
		\item $|\partial^{\beta}_0\lambda^\rho_1|\lesssim{\begin{cases}
		 &\rho^2\text{ for }|\beta|=4.\\
		 &1\text{ for }|\beta|>4.\\
		\end{cases} }$.
	\end{enumerate}
\end{theorem}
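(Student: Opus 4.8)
The plan is to run an induction on $|\beta|$ built on the cell problems~\cref{cellprobbeta2} for $\partial^\beta_0\phi_1^\rho$ and the formula~\cref{lambdadiffs2} for $\partial^\beta_0\lambda_1^\rho$, treating the range $1\le\rho<\infty$ in two pieces. On a bounded piece $\rho\in[1,\rho_0]$ (with $\rho_0$ a fixed constant chosen below) there is nothing to do: \cref{boundednessofeigfuncforlowrho} gives $\|\partial^\beta_0\phi_1^\rho\|_{H^1_\sharp(Y)}\le C(\rho_0)$ and $|\partial^\beta_0\lambda_1^\rho|\le C(\rho_0)$, and since $1\le\rho^2\le\rho_0^2$ there these are $\lesssim\rho^{-2}$ and $\lesssim\rho^2$ respectively; the vanishing statements $\partial^{e_j}_0\lambda_1^\rho=0$ and $\partial^{e_j+e_k+e_l}_0\lambda_1^\rho=0$ are exact consequences of the evenness of $\lambda_1^\rho$ together with \cref{Hessian}; and the estimate for $\partial^{e_j+e_k}_0\lambda_1^\rho$ holds for \emph{all} $\rho>0$, because $\partial^{e_j+e_k}_0\lambda_1^\rho=2\,e_j\cdot A^{\rho,\mathrm{hom}}e_k$ by \cref{Hessian}(4) and $|A^{\rho,\mathrm{hom}}-\mathcal{M}_Y(A)|\le C\rho^{-2}$ by the argument in the proof of \cref{stabilityhomtensor}. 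So the real content is the regime $\rho\ge\rho_0$.

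For large $\rho$ the engine is a smoothing estimate for the fiber operator at $\eta=0$ that reflects the dominance of the fourth-order term. Writing $\mathcal{A}^\rho(0)=\rho^2\Delta^2-\mathrm{div}(A\nabla)=\rho^2\Delta^2\bigl(I-\rho^{-2}(\Delta^2)^{-1}\mathrm{div}(A\nabla)\bigr)$ on mean-zero periodic functions, the operator $(\Delta^2)^{-1}\mathrm{div}(A\nabla)$ is bounded on $H^3_\sharp(Y)$, so once $\rho_0$ is large enough the Neumann series converges and the reduced inverse obeys $\|\mathcal{A}^\rho(0)^{-1}f\|_{H^3_\sharp(Y)}\lesssim\rho^{-2}\|f\|_{H^{-1}_\sharp(Y)}$ uniformly for $\rho\ge\rho_0$. (No such gain is available for bounded $\rho$, which is why the range was split.) The inductive hypothesis I carry is: $\partial^\gamma_0\phi_1^\rho\in H^3_\sharp(Y)$ with $\|\partial^\gamma_0\phi_1^\rho\|_{H^3}\lesssim\rho^{-2}$ for all $|\gamma|<|\beta|$, together with $|\partial^\gamma_0\lambda_1^\rho|\lesssim\rho^{-2}$ for $|\gamma|\le 3$, $\lesssim\rho^2$ for $|\gamma|=4$, and $\lesssim 1$ for $|\gamma|>4$.

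The base cases are immediate: for $|\beta|=0$, $\phi_1^\rho(0)$ is constant and $\lambda_1^\rho(0)=0$; for $|\beta|=1$, \cref{Hessian} gives $\partial^{e_j}_0\lambda_1^\rho=0$ and $\partial^{e_j}_0\phi_1^\rho=i\phi_1^\rho(0)\chi_j^\rho$, and $\|\chi_j^\rho\|_{H^3}\lesssim\|\nabla^3\chi_j^\rho\|_{L^2}\lesssim\rho^{-2}$ by~\cref{energyestimate2} and iterated Poincar\'e (using that $\chi_j^\rho$, $\nabla\chi_j^\rho$, $\nabla^2\chi_j^\rho$ all have zero mean on $Y$). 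For the inductive step, first take the mean of~\cref{cellprobbeta2} — writing $F_\beta$ for its right-hand side — to recover the solvability relation $\mathcal{M}_Y(F_\beta)=0$, i.e.~\cref{lambdadiffs2}, and bound its right-hand side by the inductive hypothesis: this yields $|\partial^\beta_0\lambda_1^\rho|\lesssim\rho^{-2}$ for $|\beta|=2,3$ (and identifies the $|\beta|=2$ value through \cref{homogenizedtensor2}), the $O(\rho^2)$ bound for $|\beta|=4$ coming solely from the term $8\rho^2(\delta_{jk}\delta_{lm}+\delta_{jl}\delta_{km}+\delta_{jm}\delta_{kl})\phi_1^\rho(0)$, and $\lesssim 1$ for $|\beta|>4$. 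Next, because $\mathcal{M}_Y(F_\beta)=0$, the constant contributions to $F_\beta$ (the term $\partial^\beta_0\lambda_1^\rho\,\phi_1^\rho(0)$ and, when $|\beta|=4$, the constant $\partial^\beta_0\mathcal{A}^\rho\,\phi_1^\rho(0)$) cancel, and every surviving term of $F_\beta$ is of one of the shapes: $\rho^2$ times a derivative of order at most three of some $\partial^\gamma_0\phi_1^\rho$ with $|\gamma|<|\beta|$; an operator of order at most one applied to such a $\partial^\gamma_0\phi_1^\rho$; or a product $\partial^\gamma_0\lambda_1^\rho\,\partial^{\beta-\gamma}_0\phi_1^\rho$ with $0<|\gamma|<|\beta|$. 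Using the inductive bounds — and this is exactly where the $H^3$-control is needed, since the differentiated factors never carry more than three derivatives — each such term is $\lesssim 1$ in $H^{-1}_\sharp(Y)$, the explicit $\rho^2$ being absorbed by a $\rho^{-2}$. Hence $\|F_\beta\|_{H^{-1}_\sharp(Y)}\lesssim 1$, and the smoothing estimate gives $\|\partial^\beta_0\phi_1^\rho\|_{H^3_\sharp(Y)}\lesssim\rho^{-2}$, which contains all the asserted $H^1$-bounds (indeed it is stronger than the stated $\lesssim 1$ once $|\beta|\ge 4$). Combining the two ranges of $\rho$ finishes (1)–(5).

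The main obstacle is the bookkeeping of the induction: one must verify, for each of the many terms in~\cref{cellprobbeta2} and~\cref{lambdadiffs2}, that the explicit $\rho^2$-coefficients are always matched by a factor made $O(\rho^{-2})$ by the inductive hypothesis, and — the reason one propagates $H^3$-bounds rather than just the $H^1$-bounds that are claimed — that the derivatives of $\partial^\gamma_0\phi_1^\rho$ hitting the source are never of higher order than the regularity one controls. A subsidiary point is establishing the gain estimate for $\mathcal{A}^\rho(0)^{-1}$ and fixing $\rho_0$ accordingly. An alternative to the entire large-$\rho$ analysis would be to regard $\rho^{-2}\mathcal{A}^\rho(\eta)=(\nabla+i\eta)^4+\rho^{-2}\mathcal{N}(\eta)$, with $\mathcal{N}(\eta)=-(\nabla+i\eta)\cdot A(\nabla+i\eta)$, as an analytic perturbation in the small parameter $\rho^{-2}$ of the periodic bilaplacian, whose first Bloch eigenpair near $\eta=0$ is $(|\eta|^4,\mathrm{const})$; first-order perturbation theory then gives $\phi_1^\rho(\cdot;\eta)-\phi_1^\rho(\cdot;0)=O(\rho^{-2})$ and $\lambda_1^\rho(\eta)-\rho^2|\eta|^4-\eta\cdot\mathcal{M}_Y(A)\eta=O(\rho^{-2})$ uniformly on a $\rho$-independent complex neighbourhood of the origin, whence all the derivative estimates follow from Cauchy's inequalities.
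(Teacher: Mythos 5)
Your plan is correct in substance and shares its skeleton with the paper's (deliberately terse) proof: both run an induction through the hierarchy \cref{cellprobbeta2}--\cref{lambdadiffs2}, using the solvability/compatibility relation coming from \cref{normalization}--\cref{compatibility} to extract $\partial^\beta_0\lambda_1^\rho$, and an elliptic estimate with a $\rho^{-2}$ gain to bound $\partial^\beta_0\phi_1^\rho$. Where you genuinely differ is the engine producing that gain. The paper gets it by energy estimates uniform in $\rho$: test \cref{cellprobbeta2} with $\partial^\beta_0\phi_1^\rho$ and then with $\Delta_y\partial^\beta_0\phi_1^\rho$ (exactly as in the derivation of \cref{energyestimate1} and \cref{energyestimate2} for the correctors), so that $\rho^2\|\nabla\Delta\,\partial^\beta_0\phi_1^\rho\|_{L^2}\lesssim\|F_\beta\|_{H^{-1}_\sharp(Y)}$, and then Poincar\'e (all means vanish) converts this into the stated bounds; no splitting of the $\rho$-range and no largeness assumption is needed. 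You instead invert $\mathcal{A}^\rho(0)$ on mean-zero functions by a Neumann series around $\rho^2\Delta^2$, which does give $\|\mathcal{A}^\rho(0)^{-1}\|_{H^{-1}_\sharp\to H^3_\sharp}\lesssim\rho^{-2}$ (your observation that $(\Delta^2)^{-1}\mathrm{div}(A\nabla)$ is bounded on mean-zero $H^3_\sharp(Y)$ for merely bounded measurable $A$ is correct), but only for $\rho\geq\rho_0$, forcing the patch on $[1,\rho_0]$ via \cref{boundednessofeigfuncforlowrho}; that patch is legitimate and non-circular. The trade-off is a cleaner functional-calculus statement versus the paper's case-free argument. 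Your closing alternative (analytic perturbation in $\rho^{-2}$ about the periodic bilaplacian, then Cauchy estimates) is a genuinely different and attractive route, though it quietly requires the $\rho$-independent neighbourhood and uniform resolvent control that \cref{independentnbd} and your smoothing estimate supply anyway.

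Two slips to repair, neither fatal. First, your inductive hypothesis ``$|\partial^\gamma_0\lambda_1^\rho|\lesssim\rho^{-2}$ for $|\gamma|\le 3$'' fails at $|\gamma|=2$: by \cref{homogenizedtensor2} one has $\partial^{e_j+e_k}_0\lambda_1^\rho\to 2\mathcal{M}_Y(e_j\cdot Ae_k)$, so this derivative is only $O(1)$, the $O(\rho^{-2})$ statement applying to its deviation from the limit (this is also what your own first-paragraph argument via \cref{Hessian} and \cref{stabilityhomtensor} establishes; note that this identification carries a factor $2$ relative to the wording of item (2)). The fix is harmless: in bounding $F_\beta$, the products $\partial^\gamma_0\lambda_1^\rho\,\partial^{\beta-\gamma}_0\phi_1^\rho$ with $|\gamma|=2$ become $O(1)\cdot O(\rho^{-2})$, still admissible. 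Second, at $|\beta|=4$ the two constant contributions to $F_\beta$ do not cancel identically; the correct statement is that, since $\mathcal{M}_Y(F_\beta)=0$ by \cref{lambdadiffs2} and the non-constant terms have means controlled by the inductive bounds, only the $O(\rho^2)$ parts of those constants cancel and the residual constant is $O(1)$ --- which is all you need for $\|F_\beta\|_{H^{-1}_\sharp(Y)}\lesssim 1$. With these adjustments your induction closes, and it in fact yields $\|\partial^\beta_0\phi_1^\rho\|_{H^3_\sharp(Y)}\lesssim\rho^{-2}$ for every $|\beta|\ge 1$, slightly stronger than items (1)--(4) of the theorem.
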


\begin{proof}
The estimates are computed in tandem from the equations~\cref{cellprobbeta2} and~\cref{lambdadiffs2}. One begins by proving the estimate on the derivative of the first Bloch eigenfunction at $\eta=0$, followed by the estimate on the corresponding derivative of the first Bloch eigenvalue. The solvability of~\cref{cellprobbeta2} in $H^2_\sharp(Y)$ follows from a standard application of Lax-Milgram lemma. On the other hand, one can also read off from~\eqref{cellprobbeta2} that the solution $\partial^\beta_0\phi^\rho_1\in H^3_\sharp(Y)$. As a consequence, the estimate on the derivatives of the first Bloch eigenfunction are obtained by employing the test function $\Delta_y\partial^\beta_0\phi^\rho_1$ in~\cref{cellprobbeta2} and repeated applications of Poincar\'e inequality. The computations are standard and therefore, omitted.
\end{proof}

\section{Bloch Transform and its properties}\label{blochtransform}
In this section, we will relate the Bloch spectral problem~\eqref{BlochEigenvalueProblem} to the Bloch spectral problem at the $\epsilon$-scale:
\begin{eqnarray}
	\label{BlochEigenvalueProblemepsilon}
	\begin{cases}
		&\mathcal{A}^{\kappa,\epsilon}(\eta)\phi^{\kappa,\epsilon}\coloneqq \kappa^2(\nabla+i\xi)^4\phi^{\kappa,\epsilon}(x)-(\nabla+i\xi)\cdot A\left(\frac{x}{\epsilon}\right)(\nabla+i\xi)\phi^{\kappa,\epsilon}(x)=\lambda^{\kappa,\epsilon}(\xi)\phi^{\kappa,\epsilon}(x)\\
		&\phi^{\kappa,\epsilon}(x+2\pi p\epsilon)=\phi^{\kappa,\epsilon}(x), p\in\mathbb{Z}^d,\xi\in \frac{Y^{'}}{\epsilon}.
	\end{cases}
\end{eqnarray} Comparing to~\eqref{BlochEigenvalueProblem}, by homothety and $\kappa=\rho\epsilon$, we conclude that
\begin{align}
	\lambda^{\kappa,\epsilon}(\xi)=\epsilon^{-2}\lambda^\rho(\epsilon\xi)\text{ and }\phi^{\kappa,\epsilon}(x,\xi)=\phi^\rho\left(\frac{x}{\epsilon};\epsilon\xi\right).
\end{align}

Now, we can state the Bloch decomposition theorem of $L^2(\mathbb{R}^d)$ at $\epsilon$-scale. We shall normalize $\phi^\rho_1(y;0)$ to be $(2\pi)^{-d/2}$.

\begin{theorem}\label{BlochDecompositionepsilon} 
	Let $\rho >0$. Let $g\in L^2(\mathbb{R}^d)$. Define the $m^{th}$ Bloch coefficient of $g$ at $\epsilon$-scale as 
	
	\begin{align}\label{BlochTransformepsilon}
	\mathcal{B}^{\kappa,\epsilon}_mg(\xi)\coloneqq\int_{\mathbb{R}^d}g(x)e^{-ix\cdot\xi}\overline{\phi_m^{\kappa,\epsilon}(x;\xi)}\,dx,~m\in\mathbb{N},~\xi\in \frac{Y^{'}}{\epsilon}.
	\end{align}
	
	\begin{enumerate}
		\item  The following inverse formula holds
		\begin{align}\label{Blochinverseepsilon}
		g(x)=\int_{\frac{Y^{'}}{\epsilon}}\sum_{m=1}^{\infty}\mathcal{B}^{\kappa,\epsilon}_mg(\xi)\phi_m^{\kappa,\epsilon}(x;\xi)e^{ix\cdot\xi}\,d\xi.
		\end{align}
		\item{\bf Parseval's identity} 
		\begin{align}\label{parsevalblochepsilon}
		||g||^2_{L^2(\mathbb{R}^d)}=\sum_{m=1}^{\infty}\int_{\frac{Y^{'}}{\epsilon}}|\mathcal{B}^{\kappa,\epsilon}_mg(\xi)|^2\,d\xi.
		\end{align}
		\item{\bf Plancherel formula} For $f,g\in L^2(\mathbb{R}^d)$, we have
		\begin{align}\label{Plancherelepsilon}
		\int_{\mathbb{R}^d}f(x)\overline{g(x)}\,dx=\sum_{m=1}^{\infty}\int_{\frac{Y^{'}}{\epsilon}}\mathcal{B}^{\kappa,\epsilon}_mf(\xi)\overline{\mathcal{B}^{\kappa,\epsilon}_mg(\xi)}\,d\xi.
		\end{align}
		\item{\bf Bloch Decomposition in $H^{-1}(\mathbb{R}^d)$} For an element $F=u_0(x)+\sum_{j=1}^N\frac{\partial u_j(x)}{\partial x_j}$ of $H^{-1}(\mathbb{R}^d)$, the following limit exists in $L^2\left(\frac{Y^{'}}{\epsilon}\right)$:
		\begin{align}\label{BlochTransform2epsilon}
		\mathcal{B}^{\kappa,\epsilon}_mF(\xi)=\int_{\mathbb{R}^d}e^{-ix\cdot\xi}\left\{u_0(x)\overline{\phi^{\kappa,\epsilon}_m(x;\xi)}+i\sum_{j=1}^N\xi_ju_j(x)\overline{\phi^{\kappa,\epsilon}_m(x;\xi)}\right\}\,dx\nonumber\\-\int_{\mathbb{R}^d}e^{-ix\cdot\xi}\sum_{j=1}^Nu_j(x)\frac{\partial\overline{\phi^{\kappa,\epsilon}_m}}{\partial x_j}(x;\xi)\,dx.
		\end{align}
		\item[] The definition above is independent of the particular representative of $F$. 
		\item Finally, for $g\in D(\mathcal{A}^{\kappa,\epsilon})$,
		\begin{align}
		\label{diagonalizationepsilon}
		\mathcal{B}^{\kappa,\epsilon}_m(\mathcal{A}^{\kappa,\epsilon}g)(\xi)=\lambda^{\kappa,\epsilon}_m(\xi)\mathcal{B}^{\kappa,\epsilon}_mg(\xi).\end{align}
	\end{enumerate}\qed
\end{theorem}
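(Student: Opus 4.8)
The plan is to deduce every assertion from the unscaled Bloch decomposition of \cref{BlochDecomposition} by transporting it to the $\epsilon$-scale through the unitary dilation $U_\epsilon\colon L^2(\mathbb{R}^d)\to L^2(\mathbb{R}^d)$, $(U_\epsilon g)(y)\coloneqq\epsilon^{d/2}g(\epsilon y)$, which is an isometric isomorphism with inverse $(U_\epsilon^{-1}h)(x)=\epsilon^{-d/2}h(x/\epsilon)$. First I would record the operator identity obtained by the change of variables $x=\epsilon y$ in \eqref{main} and \eqref{singularop}: using $\kappa=\rho\epsilon$ one gets $U_\epsilon\,\mathcal{A}^{\kappa,\epsilon}\,U_\epsilon^{-1}=\epsilon^{-2}\mathcal{A}^\rho$ on $D(\mathcal{A}^{\kappa,\epsilon})$, which is the operator-level counterpart of the relations $\lambda^{\kappa,\epsilon}_m(\xi)=\epsilon^{-2}\lambda^\rho_m(\epsilon\xi)$ and $\phi^{\kappa,\epsilon}_m(x;\xi)=\phi^\rho_m(x/\epsilon;\epsilon\xi)$ already noted above.

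Next I would establish the transform identity. Substituting $x=\epsilon y$ in the definition \eqref{BlochTransformepsilon}, inserting $\phi^{\kappa,\epsilon}_m(x;\xi)=\phi^\rho_m(x/\epsilon;\epsilon\xi)$ and writing $e^{-ix\cdot\xi}=e^{-iy\cdot(\epsilon\xi)}$, one obtains
\begin{align}\label{transformscaling}
	\mathcal{B}^{\kappa,\epsilon}_m g(\xi)=\epsilon^{d/2}\,\mathcal{B}^\rho_m(U_\epsilon g)(\epsilon\xi),\qquad m\in\mathbb{N},\ \xi\in\frac{Y^{'}}{\epsilon}.
\end{align}
Equivalently $\mathcal{B}^{\kappa,\epsilon}=T_\epsilon\circ\mathcal{B}^\rho\circ U_\epsilon$, where $(T_\epsilon\{c_m\})_m(\xi)\coloneqq\epsilon^{d/2}c_m(\epsilon\xi)$ is a unitary map $\ell^2(\mathbb{N};L^2(Y^{'}))\to\ell^2(\mathbb{N};L^2(Y^{'}/\epsilon))$ because $\eta=\epsilon\xi$ carries $Y^{'}/\epsilon$ onto $Y^{'}$ with Jacobian $\epsilon^{-d}$. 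Since $\mathcal{B}^\rho$ is a unitary isomorphism by \cref{BlochDecomposition}, so is $\mathcal{B}^{\kappa,\epsilon}$, and assertions (1)--(3) are just the images of \eqref{Blochinverse}, \eqref{parsevalbloch} and \eqref{Plancherel} under this conjugation: performing the change of variables $\eta=\epsilon\xi$ in the corresponding $\eta$-integrals and tracking the factors $\epsilon^{\pm d/2}$ reproduces \eqref{Blochinverseepsilon}, \eqref{parsevalblochepsilon} and \eqref{Plancherelepsilon} verbatim, where one also uses that the $\epsilon$-scale Bloch mode $\phi^{\kappa,\epsilon}_m(x;\xi)e^{ix\cdot\xi}$ is the $\epsilon$-dilate of $\phi^\rho_m(y;\eta)e^{iy\cdot\eta}$ at $\eta=\epsilon\xi$. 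For assertion (5), combining \eqref{transformscaling}, the operator identity in the form $U_\epsilon\mathcal{A}^{\kappa,\epsilon}=\epsilon^{-2}\mathcal{A}^\rho U_\epsilon$, and the diagonalization \eqref{diagonalization} for $\mathcal{A}^\rho$ gives, for $g\in D(\mathcal{A}^{\kappa,\epsilon})$,
\begin{align*}
	\mathcal{B}^{\kappa,\epsilon}_m(\mathcal{A}^{\kappa,\epsilon}g)(\xi)
	&=\epsilon^{d/2}\mathcal{B}^\rho_m\bigl(\epsilon^{-2}\mathcal{A}^\rho U_\epsilon g\bigr)(\epsilon\xi)\\
	&=\epsilon^{-2}\lambda^\rho_m(\epsilon\xi)\,\epsilon^{d/2}\mathcal{B}^\rho_m(U_\epsilon g)(\epsilon\xi)
	=\lambda^{\kappa,\epsilon}_m(\xi)\,\mathcal{B}^{\kappa,\epsilon}_m g(\xi).
\end{align*}

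I expect the extension to $H^{-1}(\mathbb{R}^d)$, assertion (4), to be the only step needing genuine care, and hence the main obstacle. Here one must show that the right-hand side of \eqref{BlochTransform2epsilon} does not depend on the chosen representative $F=u_0(x)+\sum_{j}\partial_{x_j}u_j(x)$ and that it equals the $L^2(Y^{'}/\epsilon)$-limit of \eqref{BlochTransformepsilon} along smooth approximations of $F$. Both properties will be inherited from the corresponding clause of \cref{BlochDecomposition} once one checks that \eqref{BlochTransform2epsilon} is exactly $\epsilon^{d/2}$ times \eqref{BlochTransform2} evaluated at $\eta=\epsilon\xi$ with data $U_\epsilon u_0$ and $U_\epsilon u_j$. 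This is bookkeeping with the scalings $u_0\mapsto U_\epsilon u_0$, $\partial_{x_j}u_j\mapsto\epsilon^{-1}\partial_{y_j}(U_\epsilon u_j)$, $\partial_{x_j}\phi^{\kappa,\epsilon}_m(x;\xi)=\epsilon^{-1}(\partial_{y_j}\phi^\rho_m)(x/\epsilon;\epsilon\xi)$ and $\xi_j=\epsilon^{-1}\eta_j$, verifying that the three groups of terms in \eqref{BlochTransform2epsilon} reassemble correctly after the substitution $x=\epsilon y$. As in \cref{BlochDecomposition}, the whole construction rests on a measurable selection $\eta\mapsto\phi^\rho_m(\cdot;\eta)$, equivalently $\xi\mapsto\phi^{\kappa,\epsilon}_m(\cdot;\xi)$; no new selection argument is needed at the $\epsilon$-scale.
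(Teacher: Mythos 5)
Your proposal is correct and follows essentially the route the paper intends: the paper states the $\epsilon$-scale theorem without proof, relying precisely on the homothety relations $\lambda^{\kappa,\epsilon}_m(\xi)=\epsilon^{-2}\lambda^\rho_m(\epsilon\xi)$, $\phi^{\kappa,\epsilon}_m(x;\xi)=\phi^\rho_m(x/\epsilon;\epsilon\xi)$ recorded just before it, which your unitary dilation $U_\epsilon$ and the identity $\mathcal{B}^{\kappa,\epsilon}_m g(\xi)=\epsilon^{d/2}\mathcal{B}^\rho_m(U_\epsilon g)(\epsilon\xi)$ simply make explicit. Your bookkeeping for the $H^{-1}$ clause and the diagonalization is consistent with \cref{BlochDecomposition}, so no gap remains.
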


\subsection{First Bloch transform goes to Fourier transform}

In order to compute the homogenization limit, we need to know the limit of Bloch Transform of a sequence of functions. The following theorem proves that for a sequence of functions convergent in a suitable way, the first Bloch transform converges to the Fourier transform of the limit.
\begin{theorem}\label{BlochTransformConvergence}
	Let $K\subseteq\mathbb{R}^d$ be a compact set and $(g^\epsilon)$ be a sequence of functions in $L^2(\mathbb{R}^d)$ such that $g^\epsilon=0$ outside $K$. Suppose that $g^\epsilon\rightharpoonup g$ in $L^2(\mathbb{R}^d)$-weak for some function $g\in L^2(\mathbb{R}^d)$. Then it holds that
	\begin{align*}
	\mathbbm{1}_{\epsilon^{-1}U}\mathcal{B}_1^{\kappa,\epsilon} g^\epsilon\rightharpoonup \widehat{g}
	\end{align*} in $L^2_{\text{loc}}(\mathbb{R}^d_\xi)$-weak, where $\widehat{g}$ denotes the Fourier transform of $g$ and $\mathbbm{1}_{\epsilon^{-1}U}$ denotes the characteristic function of the set ${\epsilon^{-1}U}$.
\end{theorem}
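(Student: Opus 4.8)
The plan is to exploit that, by Parseval's identity \eqref{parsevalblochepsilon}, the sequence $\mathbbm{1}_{\epsilon^{-1}U}\mathcal{B}_1^{\kappa,\epsilon}g^\epsilon$ is bounded in $L^2(\mathbb{R}^d)$ (indeed $\|\mathbbm{1}_{\epsilon^{-1}U}\mathcal{B}_1^{\kappa,\epsilon}g^\epsilon\|_{L^2}\le\|g^\epsilon\|_{L^2}$), so it suffices to test against an arbitrary $\psi\in L^2(\mathbb{R}^d)$ with compact support $L$ and show
\[
\int_{\mathbb{R}^d}\mathbbm{1}_{\epsilon^{-1}U}(\xi)\,\mathcal{B}_1^{\kappa,\epsilon}g^\epsilon(\xi)\,\overline{\psi(\xi)}\,d\xi\;\longrightarrow\;\int_{\mathbb{R}^d}\widehat{g}(\xi)\,\overline{\psi(\xi)}\,d\xi .
\]
Since $U$ is a fixed neighbourhood of $0$, the dilated sets $\epsilon^{-1}U$ exhaust $\mathbb{R}^d$, so $L\subseteq\epsilon^{-1}U$ for $\epsilon$ small and the cut-off may be dropped. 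Using the homothety relation $\phi_1^{\kappa,\epsilon}(x;\xi)=\phi_1^\rho(x/\epsilon;\epsilon\xi)$ together with the normalisation $\phi_1^\rho(y;0)=(2\pi)^{-d/2}$, I write $\overline{\phi_1^{\kappa,\epsilon}(x;\xi)}=(2\pi)^{-d/2}+\overline{r^{\epsilon}(x;\xi)}$ with $r^{\epsilon}(x;\xi)\coloneqq\phi_1^\rho(x/\epsilon;\epsilon\xi)-(2\pi)^{-d/2}$, which splits $\mathcal{B}_1^{\kappa,\epsilon}g^\epsilon$ into a Fourier part and a remainder.

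\textbf{Fourier part.} One has $(2\pi)^{-d/2}\int_{\mathbb{R}^d}g^\epsilon(x)e^{-ix\cdot\xi}\,dx=\widehat{g^\epsilon}(\xi)$. Because every $g^\epsilon$ vanishes off the fixed compact set $K$ and $g^\epsilon\rightharpoonup g$ in $L^2(\mathbb{R}^d)$, for each fixed $\xi$ we get $\widehat{g^\epsilon}(\xi)\to\widehat{g}(\xi)$, since $x\mapsto\mathbbm{1}_K(x)e^{-ix\cdot\xi}$ lies in $L^2(\mathbb{R}^d)$; moreover $|\widehat{g^\epsilon}(\xi)|\le (2\pi)^{-d/2}|K|^{1/2}\sup_\epsilon\|g^\epsilon\|_{L^2(\mathbb{R}^d)}$, the supremum being finite by the uniform boundedness principle. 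Dominated convergence then gives $\widehat{g^\epsilon}\to\widehat{g}$ strongly in $L^2_{\mathrm{loc}}(\mathbb{R}^d_\xi)$, hence $\int_L\widehat{g^\epsilon}\,\overline{\psi}\to\int_L\widehat{g}\,\overline{\psi}$.

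\textbf{Remainder.} For $\xi\in L$, Cauchy--Schwarz in $x$ together with $\spt g^\epsilon\subseteq K$ gives
\[
\Bigl|\int_{\mathbb{R}^d}g^\epsilon(x)e^{-ix\cdot\xi}\,\overline{r^{\epsilon}(x;\xi)}\,dx\Bigr|\le\|g^\epsilon\|_{L^2(K)}\,\|r^{\epsilon}(\cdot;\xi)\|_{L^2(K)} .
\]
The map $x\mapsto r^{\epsilon}(x;\xi)$ is $\epsilon Y$-periodic, so a cell-by-cell change of variables yields $\|r^{\epsilon}(\cdot;\xi)\|_{L^2(K)}^2\le C_K\,\|\phi_1^\rho(\cdot;\epsilon\xi)-(2\pi)^{-d/2}\|_{L^2_\sharp(Y)}^2$ up to an $O(\epsilon)$ boundary correction. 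Now I use the regularity of the ground state developed in Sections~\ref{regularityBloch} and~\ref{nbdanalytic}: by \cref{independentnbd} the first Bloch eigenpair is analytic on a ball $B_\delta(0)$ whose radius $\delta$ does not depend on $\rho$, and by \cref{boundednessofeigfuncforlowrho} for bounded $\rho$ together with \cref{higherestimates} for $\rho\to\infty$ the $\eta$-derivatives of $\phi_1^\rho$ are bounded in $H^1_\sharp(Y)$ on $B_{\delta/2}(0)$ uniformly in $\rho>0$; hence $\|\phi_1^\rho(\cdot;\eta)-(2\pi)^{-d/2}\|_{L^2_\sharp(Y)}\le C|\eta|$ for $|\eta|\le\delta/2$ with $C$ independent of $\rho$. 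Since $|\epsilon\xi|\le\epsilon\,\diam(L)\to0$ uniformly for $\xi\in L$, this gives $\|r^{\epsilon}(\cdot;\xi)\|_{L^2(K)}\le C\epsilon$ for all $\xi\in L$ once $\epsilon$ is small, so the remainder's contribution to the integral is at most $C\epsilon\,\sup_\epsilon\|g^\epsilon\|_{L^2(\mathbb{R}^d)}\,\|\psi\|_{L^1(L)}\to0$. Combining the two parts gives the asserted weak convergence.

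\textbf{Main obstacle.} The crux is the uniform-in-$\rho$ bound $\|\phi_1^\rho(\cdot;\eta)-(2\pi)^{-d/2}\|_{L^2_\sharp(Y)}=O(|\eta|)$: one must glue the bounded-$\rho$ estimates of \cref{boundednessofeigfuncforlowrho} to the $\rho\to\infty$ estimates of \cref{higherestimates}, all living on the $\rho$-independent neighbourhood of \cref{independentnbd}, so that the smallness of $r^\epsilon$ holds genuinely uniformly across the three regimes $\rho\to0$, $\rho\to\theta\in(0,\infty)$ and $\rho\to\infty$ (recall $\rho=\kappa/\epsilon$ is itself moving with $\epsilon$). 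Once that uniformity is in hand, the periodization of the $\epsilon Y$-periodic remainder and the weak-to-pointwise passage for $\widehat{g^\epsilon}$ are routine.
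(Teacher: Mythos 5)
Your proposal is correct and follows essentially the same route as the paper: you split $\overline{\phi_1^{\kappa,\epsilon}(x;\xi)}$ into the constant $\phi_1^\rho(\cdot;0)=(2\pi)^{-d/2}$ (yielding the Fourier transform of $g^\epsilon$, which converges to $\widehat g$) plus the remainder $\phi_1^\rho(x/\epsilon;\epsilon\xi)-\phi_1^\rho(x/\epsilon;0)$, shown to be $O(\epsilon\xi)$ uniformly in $\rho$ via the $\rho$-independent neighbourhood of \cref{independentnbd} together with \cref{boundednessofeigfuncforlowrho} and \cref{higherestimates}, exactly as in the paper's two-regime argument. Your write-up merely fills in more routine detail (Parseval bound, cell-by-cell periodization, dominated convergence for the Fourier part).
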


\begin{proof}
	In~\cref{independentnbd}, the existence of the set $U$ indepedent of $\rho$ was proved. The function $\mathcal{B}_1^{\kappa,\epsilon} g^\epsilon$ is defined for $\xi\in \epsilon^{-1}Y^{'}$. However, we shall treat it as a function on $\mathbb{R}^d$ by extending it outside $\epsilon^{-1}U$ by zero. We can write
	\begin{align*}
	\mathcal{B}_1^{\kappa,\epsilon} g^\epsilon(\xi)= \int_{\mathbb{R}^d} g(x)e^{-ix\cdot\xi}\overline{{\phi}_1^{\kappa,\epsilon}}(x;0)\,dx+\int_{\mathbb{R}^d} g(x)e^{-ix\cdot\xi}\left(\overline{\phi_1^\rho}\left(\frac{x}{\epsilon};\epsilon\xi\right) -\overline{\phi^\rho}\left(\frac{x}{\epsilon};0\right) \right)dx.
	\end{align*}
	Now, we need to distinguish between the regimes:
	\begin{itemize}
		\item[]{\textbf{Case 1. $\theta\in [0,\infty)$}}: The first term above converges to the Fourier transform of $g$ on account of the normalization of $\phi_1(y;0)$ whereas the second term goes to zero since it is $O(\epsilon\xi)$ due to the Lipschitz continuity of the first regularized Bloch eigenfunction which follows from~\eqref{lipschitzbound}.
		\item[]{\textbf{Case 2. $\theta=\infty$}}: In this case, again, the first term converges to the Fourier transform of $g$ on account of the normalization of $\phi_1(y;0)$. However, for the second term, we make use of the analyticity of $\phi_1^{\kappa,\epsilon}$ in $\epsilon^{-1}U$ and the estimates in~\cref{higherestimates} to conclude that the second term is $O(\epsilon\xi)$ independent of $\rho$.
	\end{itemize}
\end{proof}

\section{Qualitative Homogenization}\label{qualitative}

In this section, we will prove the qualitative homogenization result for the singularly perturbed homogenization problem. There are three regimes according to convergence of $\rho=\frac{\kappa}{\epsilon}$, viz., $\rho\to 0$, $\rho\to\theta\in(0,\infty)$ and $\rho\to\infty$.

\begin{theorem}\label{homog}
	Let $\Omega$ be an arbitrary domain in $\mathbb{R}^d$ and $f\in L^2(\Omega)$. Let $u^\epsilon\in H^2(\Omega)$ be such that $u^\epsilon$ converges weakly to $u^*$ in $H^1(\Omega)$, $\kappa\Delta u^\epsilon$ is uniformly bounded in $L^2(\Omega)$, and
	
	\begin{align}\label{equation}
	\mathcal{A}^{\kappa,\epsilon} u^\epsilon=f\,\mbox{in}\,~\Omega,
	\end{align} where $\kappa\to 0$ as $\epsilon\to 0$ and $\lim_{\epsilon\to 0}\frac{\kappa}{\epsilon}=\theta\in [0,\infty]$. Let $A^{hom}=(a^*_{kl})_{k,l=1}^d$ be as defined in~\cref{homtensor}. Then
	\begin{enumerate}
		\item For all $k=1,2,\ldots,d$, we have the following convergence of fluxes:
		
		\begin{align}
		A\left(\frac{x}{\epsilon}\right)\nabla u^\epsilon(x)\rightharpoonup A^{hom}\nabla u^*(x) \mbox{ in } (L^2(\Omega))^d\mbox{-weak}.
		\end{align}
		
		\item The limit $u^*$ satisfies the homogenized equation:
		
		\begin{align}\label{homoperator}
		\mathcal{A}^{hom}u^*=-\nabla\cdot A^{hom}\nabla u^*=f\,\mbox{ in }\,\Omega.
		\end{align}
		
	\end{enumerate}
\end{theorem}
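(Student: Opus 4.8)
The plan is to deduce both conclusions from the single assertion that $A(x/\epsilon)\nabla u^\epsilon\rightharpoonup A^{\mathrm{hom}}\nabla u^*$ in $L^2(\Omega)^d$; conclusion (2) then follows by taking the divergence. First I would dispose of the easy half. Since $u^\epsilon\rightharpoonup u^*$ in $H^1(\Omega)$, the flux $\sigma^\epsilon:=A(x/\epsilon)\nabla u^\epsilon$ is bounded in $L^2(\Omega)^d$, so along a subsequence $\sigma^\epsilon\rightharpoonup\sigma^*$; testing $\mathcal A^{\kappa,\epsilon}u^\epsilon=f$ against $\phi\in\mathcal D(\Omega)$ and integrating by parts gives $\kappa\int_\Omega(\kappa\Delta u^\epsilon)\Delta\phi\,dx+\int_\Omega\sigma^\epsilon\cdot\nabla\phi\,dx=\int_\Omega f\phi\,dx$, and since $\kappa\Delta u^\epsilon$ is bounded in $L^2(\Omega)$ and $\kappa\to0$ the first term vanishes in the limit, so $-\nabla\cdot\sigma^*=f$ in $\mathcal D'(\Omega)$. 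Hence it only remains to identify $\sigma^*=A^{\mathrm{hom}}\nabla u^*$, after which uniqueness of the limit upgrades the subsequence to the full sequence.

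The identification uses the Bloch machinery. Fix $\psi\in\mathcal D(\Omega)$ and put $v^\epsilon:=\psi u^\epsilon$, extended by zero to $\mathbb R^d$; then $v^\epsilon\in H^2(\mathbb R^d)$ is supported in a fixed compact set, $v^\epsilon\rightharpoonup\psi u^*$ in $H^1$, and $v^\epsilon\to\psi u^*$ strongly in $L^2$ by Rellich. A direct computation gives $\mathcal A^{\kappa,\epsilon}v^\epsilon=\psi f+R^\epsilon$ with the localization error $R^\epsilon=[\mathcal A^{\kappa,\epsilon},\psi]u^\epsilon$ bounded in $H^{-1}(\mathbb R^d)$, its fourth-order contribution carrying a factor $\kappa^2$ and hence negligible (the $\kappa$-weighted higher derivatives of $u^\epsilon$ being controlled via $\kappa\Delta u^\epsilon\in L^2$ and interior elliptic estimates). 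Applying the first Bloch transform at the $\epsilon$-scale (\cref{BlochDecompositionepsilon}), the diagonalization \eqref{diagonalizationepsilon}, and the homothety $\lambda^{\kappa,\epsilon}_m(\xi)=\epsilon^{-2}\lambda^\rho_m(\epsilon\xi)$, the spectral-gap bounds $\lambda^\rho_2\ge\alpha\lambda^N_2$ for $\rho$ bounded (\cref{lowrhobound}) and $\lambda^\rho_m\gtrsim\rho^2$ for $\rho$ large (\cref{largerhobound}) give $\lambda^{\kappa,\epsilon}_m(\xi)\gtrsim\epsilon^{-2}$ for all $m\ge2$, uniformly in $\rho$; this lets one replace $v^\epsilon$, modulo errors controlled through the spectral gap and the $\rho$-independent ball $U$ of \cref{independentnbd}, by its first Bloch component $v^{\epsilon,1}(x)=\int_{\epsilon^{-1}U}\mathcal B^{\kappa,\epsilon}_1 v^\epsilon(\xi)\,\phi^{\kappa,\epsilon}_1(x;\xi)\,e^{ix\cdot\xi}\,d\xi$.

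Next I would pass to the limit in the first mode via the Taylor expansions of $\lambda^\rho_1$ and $\phi^\rho_1$ at $\eta=0$. By \cref{Hessian}, $\lambda^\rho_1(0)=0$, $\nabla_\eta\lambda^\rho_1(0)=0$, $\tfrac12\nabla^2_\eta\lambda^\rho_1(0)=A^{\rho,\mathrm{hom}}$, and $\partial_{\eta_l}\phi^\rho_1(\cdot;0)=i\phi^\rho_1(\cdot;0)\chi^\rho_l$ with $\chi^\rho_l$ the corrector of \eqref{correctors}; substituting $\eta=\epsilon\xi$, $y=x/\epsilon$ gives $\lambda^{\kappa,\epsilon}_1(\xi)=\xi\cdot A^{\rho,\mathrm{hom}}\xi+\epsilon^2 O(|\xi|^4)$ and $\nabla_x\bigl(\phi^{\kappa,\epsilon}_1(x;\xi)e^{ix\cdot\xi}\bigr)_k=i(2\pi)^{-d/2}e^{ix\cdot\xi}\bigl[\xi_m(\partial_{y_k}\chi^\rho_m)(x/\epsilon)+\xi_k\bigr]+O(\epsilon|\xi|^2)$, so that the flux of $v^{\epsilon,1}$ is, to leading order, $\sum_m\bigl(A(e_m+\nabla\chi^\rho_m)\bigr)_k(x/\epsilon)$ times $i(2\pi)^{-d/2}\int_{\epsilon^{-1}U}\xi_m\,\mathcal B^{\kappa,\epsilon}_1 v^\epsilon(\xi)\,e^{ix\cdot\xi}\,d\xi$. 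By \cref{BlochTransformConvergence} the first Bloch transform of $v^\epsilon$ becomes the Fourier transform of $\psi u^*$, so this last factor converges weakly to $\partial_m(\psi u^*)$; and since its Fourier support lies in $B_{\epsilon^{-1}\delta}(0)$ with $\delta<1$, its product with the fast-oscillating coefficient $\bigl(A(e_m+\nabla\chi^\rho_m)\bigr)_k(x/\epsilon)$ passes to the weak limit as the cell average $\mathcal M_Y\bigl((A(e_m+\nabla\chi^\rho_m))_k\bigr)=A^{\rho,\mathrm{hom}}_{km}$ times $\partial_m(\psi u^*)$, the non-constant Fourier modes of the coefficient being forced off to infinity. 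Combining this with $A^{\rho,\mathrm{hom}}\to A^{\mathrm{hom}}$ (\cref{stabilityhomtensor}), tracking the corresponding contribution of the higher Bloch modes, and letting $\psi$ exhaust $\Omega$, one arrives at $\sigma^*=A^{\mathrm{hom}}\nabla u^*$, which is conclusion (1), hence also (2).

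The hard part is the uniformity of every estimate across the three regimes $\rho\to0$, $\rho\to\theta\in(0,\infty)$, $\rho\to\infty$ — especially $\theta=\infty$, where the remainders $\epsilon^2 O(|\xi|^4)$ in $\lambda^{\kappa,\epsilon}_1$ and $O(\epsilon|\xi|^2)$ in the expansion of $\phi^\rho_1$ must be controlled \emph{independently of $\rho$}, which is precisely the role of the $\rho$-free ball of analyticity (\cref{independentnbd}) and the uniform-in-$\rho$ derivative bounds of \cref{higherestimates}. The other delicate point, and the principal technical obstacle, is making the spectral gap, the radius of analyticity, and the $\kappa^2$-smallness of the fourth-order part interact so that the higher Bloch modes together with the $H^{-1}$ localization error $R^\epsilon$ contribute to the homogenization limit exactly what is required — a book-keeping carried out in full in \cref{highermodes}.
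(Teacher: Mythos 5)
Your opening reduction (pass to the limit in the weak formulation, kill the $\kappa^2\Delta^2$ term using $\kappa\Delta u^\epsilon$ bounded and the extra factor $\kappa$, so that everything reduces to identifying $\sigma^*=A^{\mathrm{hom}}\nabla u^*$) is fine, and your localization, the uniform spectral gap, the $\rho$-independent ball of analyticity and the uniform derivative bounds are used exactly as in the paper. The genuine problem is the identification step. The paper never reconstructs the flux from the Bloch decomposition of $u^\epsilon$: it Bloch-transforms the localized \emph{equation}, passes to the limit term by term (the corrector enters only through the $H^{-1}$ term $h^\epsilon$, via $\partial_{\eta_s}\phi^\rho_1(\cdot;0)=i\phi^\rho_1(\cdot;0)\chi^\rho_s$ and \cref{Hessian}), obtains the Fourier-space identity \cref{Bloch2}, and then extracts \emph{both} $\sigma^*_k=a^*_{kl}\partial_l u^*$ and the homogenized PDE simultaneously by the $\psi_0e^{ix\cdot\omega}$ localization trick. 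You instead replace $v^\epsilon=\psi u^\epsilon$ by its first Bloch component and defer the higher modes to a ``book-keeping'' justified by the spectral gap and \cref{highermodes}. That step fails: the spectral gap and \cref{highermodes} only give $\lVert v^\epsilon-v^{\epsilon,1}\rVert_{L^2}\lesssim\epsilon$; they give no smallness of the \emph{gradient} of the remainder, and the flux $A(x/\epsilon)\nabla(\cdot)$ of an $L^2$-small, gradient-bounded, rapidly oscillating sequence need not converge weakly to zero.

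In fact your own computation shows the higher-mode flux cannot be negligible. Your first-mode flux limit is $A^{\rho,\mathrm{hom}}\nabla(\psi u^*)\to A^{\mathrm{hom}}\nabla(\psi u^*)$, while the full flux obeys $A(x/\epsilon)\nabla(\psi u^\epsilon)=\psi\,A(x/\epsilon)\nabla u^\epsilon+u^\epsilon A(x/\epsilon)\nabla\psi\rightharpoonup\psi\sigma^*+\mathcal{M}_Y(A)\,u^*\nabla\psi$; even granting the desired identity $\sigma^*=A^{\mathrm{hom}}\nabla u^*$, the discrepancy $\bigl(\mathcal{M}_Y(A)-A^{\mathrm{hom}}\bigr)u^*\nabla\psi$ is in general nonzero (except in the regime $\theta=\infty$) and must therefore be carried by the modes $m\ge2$. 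The same phenomenon occurs already for a fixed smooth compactly supported $v$: its higher-mode part is $O(\epsilon)$ in $L^2$, yet the flux of its first Bloch component converges weakly to $A^{\mathrm{hom}}\nabla v$ while $A(x/\epsilon)\nabla v\rightharpoonup\mathcal{M}_Y(A)\nabla v$. So the deferred step is not book-keeping: you would have to compute the higher-mode flux exactly (essentially the Bloch-approximation analysis, which needs gradient-level estimates and the equation, none of which \cref{highermodes} provides), or else follow the paper and pass to the limit in the transformed equation, recovering $\sigma^*$ and the PDE from \cref{Bloch2}. A secondary, related gap: your expansions $\lambda^{\kappa,\epsilon}_1(\xi)=\xi\cdot A^{\rho,\mathrm{hom}}\xi+\epsilon^2O(|\xi|^4)$ and $\nabla_x\bigl(\phi^{\kappa,\epsilon}_1e^{ix\cdot\xi}\bigr)=\dots+O(\epsilon|\xi|^2)$ are invoked inside integrals over all of $\xi\in\epsilon^{-1}U$, where $|\xi|$ may be of order $\epsilon^{-1}$; the paper only ever uses such expansions against fixed compact sets of $\xi$ (convergence in $L^2_{\mathrm{loc}}(\mathbb{R}^d_\xi)$, \cref{BlochTransformConvergence}), which is what renders the remainders harmless, uniformly in $\rho$.
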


\begin{remark}
	In the spirit of H-convergence~\cite{muratHconvergence1997}, we do not impose any boundary condition on the equation. The H-convergence compactness theorem concerns convergence of sequences on which certain differential constraints have been imposed. In homogenization, the weak convergence of solutions is a consequence of uniform bounds on them, which follow from boundary conditions imposed on the equation. In the theorem quoted above, the uniform boundedness on $\kappa\Delta u^\epsilon$ would have followed if appropriate boundary conditions were imposed.
\end{remark}

The proof of Theorem~\ref{homog} is divided into the following steps. We begin by localizing the equation~\eqref{equation} which is posed on $\Omega$, so that it is posed on $\mathbb{R}^d$. We take the first Bloch transform $\mathcal{B}^{\kappa,\epsilon}_1$ of this equation and pass to the limit $\kappa,\epsilon\to 0$. The proof relies on the analyticity of the first Bloch eigenvalue and eigenfunction in a neighborhood of $0\in Y^{'}$. The limiting equation is an equation in Fourier space. The homogenized equation is obtained by taking the inverse Fourier transform. We will use the notation $a_{kl}^\epsilon(x)$ to denote $a_{kl}\left(\frac{x}{\epsilon}\right)$. Further, we will assume the Einstein convention of summing over repeated indices. The proof has been divided into separate cases for the regimes $\rho\to\theta\in(0,\infty)$, $\rho\to\infty$, and $\rho=0$.

\subsection{Localization}

Let $\psi_0$ be a fixed smooth function supported in a compact set $K\subset\mathbb{R}^d$. Since $u^\epsilon$ satisfies $\mathcal{A}^{\kappa,\epsilon} u^\epsilon=f$, $\psi_0 u^\epsilon$ satisfies

\begin{align}\label{local}
\mathcal{A}^{\kappa,\epsilon}(\psi_0 u^\epsilon)(x)=\psi_0f(x)+g^\epsilon(x)+h^{\epsilon}(x)+\sum_{m=1}^4l^{\kappa,\epsilon}_m(x)\,\mbox{ in }\,\mathbb{R}^d,
\end{align}

where

\begin{align}
g^\epsilon(x)&\coloneqq-\frac{\partial \psi_0}{\partial x_k}(x)a^\epsilon_{kl}(x)\frac{\partial u^\epsilon}{\partial x_l}(x),\label{geps}\\
h^{\epsilon}(x)&\coloneqq-\frac{\partial}{\partial x_k}\left(\frac{\partial \psi_0}{\partial x_l}(x)a^{\epsilon}_{kl}(x)u^\epsilon(x)\right),\label{heps}\\
l^{\kappa,\epsilon}_1(x)&\coloneqq\kappa^2\frac{\partial^4\psi^0}{\partial x_k^4}(x)u^\epsilon(x)\label{leps1}.\\
l^{\kappa,\epsilon}_2(x)&\coloneqq 4\kappa^2\frac{\partial^3\psi^0}{\partial x_k^3}(x)\frac{\partial u^\epsilon}{\partial x_k}(x)\label{leps2}.\\
l^{\kappa,\epsilon}_3(x)&\coloneqq 2\kappa^2\frac{\partial^2\psi^0}{\partial x_k^2}(x)\frac{\partial^2u^\epsilon}{\partial x_k^2}(x)\label{leps3}.\\
l^{\kappa,\epsilon}_4(x)&\coloneqq 4\kappa^2\frac{\partial\psi^0}{\partial x_k}(x)\frac{\partial^3u^\epsilon}{\partial x_k^3}(x)+ 4\kappa^2\frac{\partial^2\psi^0}{\partial x_k^2}(x)\frac{\partial^2u^\epsilon}{\partial x_k^2}(x)=4\kappa^2\frac{\partial}{\partial x_k}\left(\frac{\partial\psi_0}{\partial x_k}\frac{\partial^2u^\epsilon}{\partial x_k^2}\right)\label{leps4}.
\end{align}

While the sequence $g^\epsilon$ is bounded in $L^2(\mathbb{R}^d)$, the sequence $h^{\epsilon}$ is bounded in $H^{-1}(\mathbb{R}^d)$. Taking the first Bloch transform of both sides of the equation~\eqref{local}, we obtain for $\xi\in\epsilon^{-1}U$ a.e.

\begin{align}\label{Bloch}
\lambda^{\kappa,\epsilon}_1(\xi)\mathcal{B}^{\kappa,\epsilon}_1(\psi_0 u^\epsilon)(\xi)=\mathcal{B}^{\kappa
,\epsilon}_1(\psi_0 f)(\xi)+\mathcal{B}^{\kappa,\epsilon}_1g^\epsilon(\xi)+\mathcal{B}^{\kappa,\epsilon}_1h^{\epsilon}(\xi)+\sum_{m=1}^4\mathcal{B}^{\kappa,\epsilon}_1l^{\kappa,\epsilon}_m(\xi)
\end{align}

We shall now pass to the limit $\kappa,\epsilon\to 0$ in the equation~\eqref{Bloch}.

\subsection{\underline{Case 1 : $\rho\to\theta\in(0,\infty)$}}

\subsubsection{Limit of $\lambda^{\kappa,\epsilon}_1(\xi)\mathcal{B}^{\kappa,\epsilon}_1(\psi_0 u^\epsilon)$}
We expand the first Bloch eigenvalue about $\eta=0$ in $\lambda^{\kappa,\epsilon}_1(\xi)\mathcal{B}^{\kappa,\epsilon}_1(\psi_0 u^\epsilon)$ to write
\begin{align*}
	\left(\frac{1}{2}\frac{\partial^2\lambda^\rho_1}{\partial\eta_s\partial\eta_t}(0)\xi_s\xi_t + O(\epsilon^2) \right) \mathcal{B}^{\kappa,\epsilon}_1(\psi_0 u^\epsilon).
\end{align*} The higher order derivatives of $\lambda^\rho_1(\eta)$ are bounded uniformly in $\rho$ (see~\cref{boundednessofeigfuncforlowrho}). Hence, their contribution is $O(\epsilon^2)$. Now, we can pass to the limit $\kappa,\epsilon\to 0$ in $L^2_{\loc}(\mathbb{R}^d_\xi)$-weak by applying Lemma~\ref{BlochTransformConvergence} to obtain:

\begin{align}\label{convergence_ueps}
e_s\cdot A^{hom}e_t{\partial\eta_s\partial\eta_t}(0)\xi_s\xi_t\widehat{\psi_o u^*}(\xi).
\end{align}

\subsubsection{Limit of $\mathcal{B}^{\kappa,\epsilon}_1(\psi_0 f)$}
An application of Lemma~\ref{BlochTransformConvergence} yields the convergence of $\mathcal{B}^{\kappa,\epsilon}_1(\psi_0 f)$ to $(\psi_0 f)^{\bf\widehat{}}$ in $L^2_{\loc}(\mathbb{R}^d_\xi)$-weak.

\subsubsection{Limit of $\mathcal{B}^{\kappa,\epsilon}_1g^\epsilon$}
The sequence $g^\epsilon$ as defined in~\eqref{geps} is bounded in $L^2(\mathbb{R}^d)$ and hence has a weakly convergent subsequence with limit $g^*\in L^2(\mathbb{R}^d)$. This sequence is supported in a fixed set $K$. Also, note that the sequence $\displaystyle\sigma_k^\epsilon(x)\coloneqq a^\epsilon_{kl}(x)\frac{\partial u^\epsilon}{\partial x_l}(x)$ is bounded in $L^2(\Omega)$, hence has a weakly convergent subsequence whose limit is denoted by $\sigma^*_k$ for $k=1,2,\ldots,d$. Extend $\sigma^*_k$ by zero outside $\Omega$ and continue to denote the extension by $\sigma^*_k$. Thus, $g^*$ is given by $-\frac{\partial \psi_0}{\partial x_k}\sigma^*_k$. Therefore, by Lemma~\ref{BlochTransformConvergence}, we obtain the following convergence in $L^2_{\loc}(\mathbb{R}^d_\xi)$-weak:

\begin{align}\label{convergence_geps}
\chi_{\epsilon^{-1}U}(\xi)\mathcal{B}^{\kappa,\epsilon}_1g^\epsilon(\xi)\rightharpoonup-\left(\frac{\partial \psi_0}{\partial x_k}(x)\sigma^*_k(x)\right)^{\bf\widehat{}}(\xi).
\end{align}

\subsubsection{Limit of $\mathcal{B}_1^{\kappa,\epsilon}h^{\epsilon}$}

We have the following weak convergence for $\mathcal{B}_1^{\kappa,\epsilon}h^{\epsilon}$ in $L^2_{\loc}(\mathbb{R}^d_{\xi})$.

\begin{align}\label{convergence_heps}
\lim_{\epsilon\to 0}\,\chi_{\epsilon^{-1}U}(\xi)\mathcal{B}^{\kappa,\epsilon}_1h^{\epsilon}(\xi)=-i\xi_ka^*_{kl}\left(\frac{\partial \psi_0}{\partial x_l}(x)u^*(x)\right)^{\bf\widehat{}}(\xi)
\end{align}

We shall prove this in the following steps.

\paragraph{Step 1} By the definition of the Bloch transform~\eqref{BlochTransform2epsilon} for elements of $H^{-1}(\mathbb{R}^d)$, we have

\begin{align}\label{heps1}
\mathcal{B}^{\kappa,\epsilon}_1h^{\epsilon}(\xi)=-i\xi_k\int_{\mathbb{R}^d}e^{-ix\cdot\xi}\frac{\partial \psi_0}{\partial x_l}(x)a^{\epsilon}_{kl}(x)u^\epsilon(x)\overline{\phi^\rho_1\left(\frac{x}{\epsilon};\epsilon\xi\right)}\,dx\nonumber\\+\int_{\mathbb{R}^d}e^{-ix\cdot\xi}\frac{\partial \psi_0}{\partial x_l}(x)a^{\epsilon}_{kl}(x)u^\epsilon(x)\frac{\partial\overline{\phi^{\rho}_1}}{\partial x_k}\left(\frac{x}{\epsilon};\epsilon\xi\right)\,dx.
\end{align}

\paragraph{Step 2} 
The first term on RHS of~\eqref{heps1} is the Bloch transform of the expression $-i\xi_k\frac{\partial \psi_0}{\partial x_l}(x)a^{\epsilon}_{kl}(x)u^\epsilon(x)$ which converges weakly to $-i\xi_k\mathcal{M}_Y(a_{kl})\left(\frac{\partial \psi_0}{\partial x_l}(x)u^*(x)\right)$.

\paragraph{Step 3} Now, we analyze the second term on RHS of~\eqref{heps1}. To this end, we make use of analyticity of first Bloch eigenfunction with respect to the dual parameter $\eta$ near $0$. We have the following power series expansion in $H^1_\sharp(Y)$ for $\phi_1^\rho(\eta)$ about $\eta=0$:

\begin{align}
\phi_1^\rho(y;\eta)=\phi_1^\rho(y;0)+\eta_s\frac{\partial \phi^\rho_1}{\partial \eta_s}(y;0)+\gamma^\rho(y;\eta).
\end{align}

We know that $\gamma^\rho(y;0)=0$ and $(\partial \gamma^\rho/\partial \eta_s)(y;0)=0$, therefore, $\gamma^\rho(\cdot;\eta)=O(|\eta|^2)$ in $L^\infty(U;H^1_\sharp(Y))$. We also have $(\partial\gamma^\rho/\partial y_k)(\cdot;\eta)=O(|\eta|^2)$ in $L^\infty(U;L^2_\sharp(Y))$. These orders are uniform in $\rho$ by~\cref{boundednessofeigfuncforlowrho}.
Now,  

\begin{align}
\phi_1^{\kappa,\epsilon}(x;\xi)=\phi_1^\rho\left(\frac{x}{\epsilon};\epsilon\xi\right)=\phi_1^\rho\left(\frac{x}{\epsilon};0\right)+\epsilon\xi_s\frac{\partial \phi^\rho_1}{\partial \eta_s}\left(\frac{x}{\epsilon};0\right)+\gamma^\rho\left(\frac{x}{\epsilon};\epsilon\xi\right).
\end{align}

Differentiating the last equation with respect to $x_k$, we obtain

\begin{align}\label{derivativeofphi}
\frac{\partial}{\partial x_k}\phi_1^\rho\left(\frac{x}{\epsilon};\epsilon\xi\right)=\xi_s\frac{\partial}{\partial y_k}\frac{\partial \phi^\rho_1}{\partial \eta_s}\left(\frac{x}{\epsilon};0\right)+\epsilon^{-1}\frac{\partial \gamma^\rho}{\partial y_k}\left(\frac{x}{\epsilon};\epsilon\xi\right).
\end{align}

For $\xi$ belonging to the set $\{\xi:\epsilon\xi\in U\mbox{ and }|\xi|\leq M\}$, we have

\begin{align}\label{gammaepsilon2}
\frac{\partial \gamma^\rho}{\partial y_k}(\cdot;\epsilon\xi)=O(|\epsilon\xi|^2)=\epsilon^2O(|\xi|^2)\leq CM^2\epsilon^2.
\end{align}

As a consequence,

\begin{align}
\epsilon^{-2}\frac{\partial \gamma^\rho}{\partial y_k}(x/\epsilon;\epsilon\xi)\in L^\infty_{\loc}(\mathbb{R}^d_\xi;L^2_\sharp(\epsilon Y)).
\end{align}

The second term on the RHS of~\eqref{heps1} is given by

\begin{align}\label{secondterm}
\chi_{\epsilon^{-1}U}(\xi)\int_K e^{-ix\cdot\xi}\frac{\partial\psi_0}{\partial x_l}(x)a_{kl}\left(\frac{x}{\epsilon}\right)u^\epsilon(x)\frac{\partial}{\partial x_k}\left(\overline{\phi^{\rho}_1}\left(\frac{x}{\epsilon};\epsilon\xi\right)\right)\,dx.
\end{align}

Substituting~\eqref{derivativeofphi} in~\eqref{secondterm}, we obtain

\begin{align}\label{twotermsarehere}
\chi_{\epsilon^{-1}U}(\xi)\int_K e^{-ix\cdot\xi}\frac{\partial\psi_0}{\partial x_l}(x)a_{kl}\left(\frac{x}{\epsilon}\right)u^\epsilon(x)\biggl[\xi_s\frac{\partial}{\partial y_k}\frac{\partial \phi^{\rho}_1}{\partial \eta_s}\left(\frac{x}{\epsilon};0\right)+\epsilon^{-1}\frac{\partial \gamma^\rho}{\partial y_k}\left(\frac{x}{\epsilon};\epsilon\xi\right)\biggr]\,dx.
\end{align}

In the last expression, the term involving $\gamma^\rho$ goes to zero as $\epsilon\to 0$ in view of~\eqref{gammaepsilon2}, whereas the other term has the following limit as $\rho\to\theta\in(0,\infty)$:

\begin{align}\label{secondterm2}
\mathcal{M}_Y\left(a_{kl}(y)\frac{\partial \chi^{\theta}_s}{\partial y_k}(y)\right)\xi_s\int_{\mathbb{R}^d}e^{-ix\cdot\xi}\frac{\partial\psi_0}{\partial x_l}(x)u^*(x)\,dx.
\end{align}

To see this, we write the second term as

\begin{align*}
	\int_K e^{-ix\cdot\xi}&\frac{\partial\psi_0}{\partial x_l}(x)a_{kl}\left(\frac{x}{\epsilon}\right)u^\epsilon(x)\xi_s\frac{\partial}{\partial y_k}\frac{\partial \phi^{\rho}_1}{\partial \eta_s}\left(\frac{x}{\epsilon};0\right)\,dx\\
	&=\int_K e^{-ix\cdot\xi}\frac{\partial\psi_0}{\partial x_l}(x)a_{kl}\left(\frac{x}{\epsilon}\right)u^\epsilon(x)\xi_s\frac{\partial \chi^{\rho}_s}{\partial y_k}\left(\frac{x}{\epsilon}\right)\,dx\\
	&=\int_K e^{-ix\cdot\xi}\frac{\partial\psi_0}{\partial x_l}(x)a_{kl}\left(\frac{x}{\epsilon}\right)u^\epsilon(x)\xi_s\left(\frac{\partial \chi^{\theta}_s}{\partial y_k}\left(\frac{x}{\epsilon}\right)+\left[\frac{\partial \chi^{\rho}_s}{\partial y_k}\left(\frac{x}{\epsilon}\right)-\frac{\partial \chi^{\theta}_s}{\partial y_k}\left(\frac{x}{\epsilon}\right)\right]\right)\,dx.
\end{align*}

The first term in parantheses goes to~\cref{secondterm2} due to strong convergence of $u^\epsilon$ in $L^2(K)$ and weak convergence of $a_{kl}\frac{\partial \chi^{\theta}_s}{\partial y_k}\left(\frac{x}{\epsilon}\right)$, whereas the expression in the square brackets goes to zero due to~\cref{estimateforposrho}.

\paragraph{Step 4} By Theorem~\ref{Hessian} and Remark~\ref{normalization}, it follows that 

\begin{align}\label{equivalence2}
\mathcal{M}_Y\left(a_{kl}(y)\frac{\partial}{\partial y_k}\left(\frac{\partial\phi^{\theta}_1}{\partial\eta_s}(y;0)\right)\right)=-i(2\pi)^{-d/2}\mathcal{M}_Y\left(a_{kl}(y)\frac{\partial \chi^{\theta}_s}{\partial y_k}(y)\right).
\end{align}

Therefore, we have the following convergence in $L^2_{\loc}(\mathbb{R}^d_\xi)$-weak:

\begin{align}
\chi_{\epsilon^{-1}U}(\xi)\mathcal{B}^{\kappa,\epsilon}_1h^{\epsilon}(\xi)&\rightharpoonup-i\xi_s\biggl\{\mathcal{M}_Y(a_{kl})+\mathcal{M}_Y\left(a_{kl}(y)\frac{\partial \chi^{\theta}_s}{\partial y_k}(y)\right)\biggr\}\left(\frac{\partial \psi_0}{\partial x_l}(x)u^*(x)\right)^{\bf\widehat{}}(\xi)\nonumber\\
&=-i\xi_s a_{kl}^{*} \left(\frac{\partial \psi_0}{\partial x_l}(x)u^*(x)\right)^{\bf\widehat{}}(\xi)
\end{align}

\subsubsection{Limit of $\mathcal{B}_1^{\kappa,\epsilon}l_1^{\kappa,\epsilon}$} We shall prove that 
\begin{align}\label{convergence_leps1}
	\lim_{\epsilon\to 0}\mathcal{B}_1^{\kappa,\epsilon}l_1^{\kappa,\epsilon}=0.
\end{align}
Observe that 
\begin{align}
	\mathcal{B}_1^{\kappa,\epsilon}l^{\kappa,\epsilon}_1(\xi)=\kappa^2\int_{\mathbb{R}^d}e^{-ix\cdot\xi}\frac{\partial^4\psi^0}{\partial x_k^4}(x)u^\epsilon(x)\overline{\phi^{\kappa,\epsilon}(x,\xi)}\,dx.
\end{align} The integral is the Bloch transform of $\frac{\partial^4\psi^0}{\partial x_k^4}(x)u^\epsilon(x)$ which converges to the Fourier transform of $\frac{\partial^4\psi^0}{\partial x_k^4}(x)u^*(x)$. However, since $\kappa\to 0$, the whole expression goes to zero.

\subsubsection{Limit of $\mathcal{B}_1^{\kappa,\epsilon}l_2^{\kappa,\epsilon}$} We shall prove that 
\begin{align}\label{convergence_leps2}
	\lim_{\epsilon\to 0}\mathcal{B}_1^{\kappa,\epsilon}l_2^{\kappa,\epsilon}=0.
\end{align}

Observe that 
\begin{align}
	\mathcal{B}_1^{\kappa,\epsilon}l^{\kappa,\epsilon}_2(\xi)=4\kappa^2\int_{\mathbb{R}^d}e^{-ix\cdot\xi}\frac{\partial^3\psi^0}{\partial x_k^3}(x)\frac{\partial u^\epsilon}{\partial x_k}(x)\overline{\phi^{\kappa,\epsilon}(x,\xi)}\,dx.
\end{align} The integral is the Bloch transform of $\frac{\partial^3\psi^0}{\partial x_k^3}(x)\frac{\partial u^\epsilon}{\partial x_k}(x)$ which converges to the Fourier transform of $\frac{\partial^3\psi^0}{\partial x_k^3}(x)\frac{\partial u^*}{\partial x_k}(x)$. However, since $\kappa\to 0$, the whole expression goes to zero.

\subsubsection{Limit of $\mathcal{B}_1^{\kappa,\epsilon}l_3^{\kappa,\epsilon}$} We shall prove that 
\begin{align}\label{convergence_leps3}
	\lim_{\epsilon\to 0}\mathcal{B}_1^{\kappa,\epsilon}l_3^{\kappa,\epsilon}=0.
\end{align}

Observe that 
\begin{align}
	\mathcal{B}_1^{\kappa,\epsilon}l^{\kappa,\epsilon}_3(\xi)=2\kappa\int_{\mathbb{R}^d}e^{-ix\cdot\xi}\frac{\partial^2\psi^0}{\partial x_k^2}(x)\kappa\frac{\partial^2 u^\epsilon}{\partial x_k^2}(x)\overline{\phi^{\kappa,\epsilon}(x,\xi)}\,dx.
\end{align} The integral is the Bloch transform of $\frac{\partial^2\psi^0}{\partial x_k^2}(x)\kappa\frac{\partial^2 u^\epsilon}{\partial x_k^2}(x)$ which converges for a subsequence since $\kappa\frac{\partial^2 u^\epsilon}{\partial x_k^2}(x)$ is bounded in $L^2(\Omega)$ (and hence converges weakly in $L^2(\Omega)$ for a subsequence). However, since $\kappa\to 0$, the whole expression goes to zero.

\subsubsection{Limit of $\mathcal{B}_1^{\kappa,\epsilon}l_4^{\kappa,\epsilon}$} We shall prove that 
\begin{align}\label{convergence_leps4}
	\lim_{\epsilon\to 0}\mathcal{B}_1^{\kappa,\epsilon}l_4^{\kappa,\epsilon}=0.
\end{align}

Observe that 
\begin{align*}
	l^{\kappa,\epsilon}_4(x)=4\kappa\frac{\partial}{\partial x_k}\left(\frac{\partial\psi_0}{\partial x_k}\kappa\frac{\partial^2u^\epsilon}{\partial x_k^2}\right)
\end{align*} 
belongs to $H^{-1}(\mathbb{R}^d)$, hence the Bloch trasform for $H^{-1}(\mathbb{R}^d)$, that is,~\cref{BlochTransform2epsilon} applies. 
Hence, 
\begin{align}\label{Bleps4}
	\mathcal{B}^{\kappa,\epsilon}_1l^{\kappa,\epsilon}_4(\xi)=-4i\kappa\xi_k\int_{\mathbb{R}^d}e^{-ix\cdot\xi}\frac{\partial \psi_0}{\partial x_l}(x)\kappa\frac{\partial^2 u^\epsilon}{\partial x_k}(x)\overline{\phi^\rho_1\left(\frac{x}{\epsilon};\epsilon\xi\right)}\,dx\nonumber\\+4\kappa\int_{\mathbb{R}^d}e^{-ix\cdot\xi}\frac{\partial \psi_0}{\partial x_l}(x)\kappa\frac{\partial^2 u^\epsilon}{\partial x_k^2}(x)\frac{\partial\overline{\phi^{\rho}_1}}{\partial x_k}\left(\frac{x}{\epsilon};\epsilon\xi\right)\,dx.
\end{align}

The analysis of the first term is the same as that of $\mathcal{B}_1^{\kappa,\epsilon}l_3^{\kappa,\epsilon}$. For the second term, observe that $\kappa\frac{\partial^2 u^\epsilon}{\partial x_k^2}(x)$ is bounded in $L^2(\Omega)$ and $\frac{\partial{\phi^{\rho}_1}}{\partial x_k}\left(\frac{x}{\epsilon};\epsilon\xi\right)$ is bounded in $L^2_\sharp(\epsilon Y)$ uniformly in $\rho$ (see~\cref{boundednessofeigfuncforlowrho}). Hence, their product is bounded in $L^1(K)$. As a result, the integral is a Fourier transform of a sequence bounded in $\rho$ and $\epsilon$. However, the presence of $\kappa$ in front of the integral causes the expression to go to zero.

Finally, passing to the limit in~\eqref{Bloch} as $\epsilon\to 0$ by applying equations~\eqref{convergence_ueps},~\eqref{convergence_geps},~\eqref{convergence_heps},

~\eqref{convergence_leps1},~\eqref{convergence_leps2},~\eqref{convergence_leps3}, and~\eqref{convergence_leps4}, we get:

\begin{align}\label{Bloch2}
a_{kl}^*\xi_k\xi_l\widehat{\psi_o u^*}(\xi)=\widehat{\psi_0 f}-\left(\frac{\partial \psi_0}{\partial x_k}(x)\sigma^*_k(x)\right)^{\bf\widehat{}}(\xi)-i\xi_ka^*_{kl}\left(\frac{\partial \psi_0}{\partial x_l}(x)u^*(x)\right)^{\bf\widehat{}}(\xi).
\end{align}

\subsection{\underline{Case 2 : $\rho\to\infty$}}
In this regime, the convergence proofs for $\mathcal{B}_1^{\kappa,\epsilon}l_1^{\kappa,\epsilon}$, $\mathcal{B}_1^{\kappa,\epsilon}l_2^{\kappa,\epsilon}$, $\mathcal{B}_1^{\kappa,\epsilon}l_3^{\kappa,\epsilon}$, $\mathcal{B}_1^{\kappa,\epsilon}g^{\epsilon}$, $\mathcal{B}_1^{\kappa,\epsilon}(\psi_0 f)$ are the same as in the earlier regime. Therefore, we will only look at the remaining convergences.

\subsubsection{Limit of $\lambda^{\kappa,\epsilon}_1(\xi)\mathcal{B}^{\kappa,\epsilon}_1(\psi_0 u^\epsilon)$}
We expand the first Bloch eigenvalue about $\eta=0$ in $\lambda^{\kappa,\epsilon}_1(\xi)\mathcal{B}^{\kappa,\epsilon}_1(\psi_0 u^\epsilon)$ to write
\begin{align*}
	\left(\frac{1}{2}\frac{\partial^2\lambda^\rho_1}{\partial\eta_s\partial\eta_t}(0)\xi_s\xi_t + \frac{\epsilon^2}{4!}\partial^{e_s+e_t+e_u+e_v}_0\lambda_1^\rho\xi_s\xi_t\xi_u\xi_v + O(\epsilon^4) \right) \mathcal{B}^{\kappa,\epsilon}_1(\psi_0 u^\epsilon).
\end{align*} The fourth order derivative is of order $\rho^2$ by~\cref{higherestimates}. The derivatives of $\lambda^\rho_1(\eta)$ of order greater than $4$ are bounded uniformly in $\rho$ (see~\cref{higherestimates}). Hence, their contribution is $O(\epsilon^4)$. Hence, we can write the above as
\begin{align*}
	\left(\frac{1}{2}\frac{\partial^2\lambda^\rho_1}{\partial\eta_s\partial\eta_t}(0)\xi_s\xi_t + O(\epsilon^2\rho^2) + O(\epsilon^4) \right) \mathcal{B}^{\kappa,\epsilon}_1(\psi_0 u^\epsilon)\\
	=\left(\frac{1}{2}\frac{\partial^2\lambda^\rho_1}{\partial\eta_s\partial\eta_t}(0)\xi_s\xi_t + O(\kappa^2) + O(\epsilon^4) \right) \mathcal{B}^{\kappa,\epsilon}_1(\psi_0 u^\epsilon)
\end{align*}
Now, we can pass to the limit $\kappa,\epsilon\to 0$ in $L^2_{\loc}(\mathbb{R}^d_\xi)$-weak by applying Lemma~\ref{BlochTransformConvergence} to obtain:

\begin{align*}
e_s\cdot A^{hom}e_t{\partial\eta_s\partial\eta_t}(0)\xi_s\xi_t\widehat{\psi_o u^*}(\xi).
\end{align*}

\subsubsection{Limit of $\mathcal{B}_1^{\kappa,\epsilon}h^{\epsilon}$}

\begin{align*}
	\lim_{\epsilon\to 0}\,\chi_{\epsilon^{-1}U}(\xi)\mathcal{B}^{\kappa,\epsilon}_1h^{\epsilon}(\xi)=-i\xi_ka^*_{kl}\left(\frac{\partial \psi_0}{\partial x_l}(x)u^*(x)\right)^{\bf\widehat{}}(\xi)
\end{align*}
	
We shall prove this in the following steps.
	
\paragraph{Step 1} As before, we have
	
	\begin{align}\label{heps12}
	\mathcal{B}^{\kappa,\epsilon}_1h^{\epsilon}(\xi)=-i\xi_k\int_{\mathbb{R}^d}e^{-ix\cdot\xi}\frac{\partial \psi_0}{\partial x_l}(x)a^{\epsilon}_{kl}(x)u^\epsilon(x)\overline{\phi^\rho_1\left(\frac{x}{\epsilon};\epsilon\xi\right)}\,dx\nonumber\\+\int_{\mathbb{R}^d}e^{-ix\cdot\xi}\frac{\partial \psi_0}{\partial x_l}(x)a^{\epsilon}_{kl}(x)u^\epsilon(x)\frac{\partial\overline{\phi^{\rho}_1}}{\partial x_k}\left(\frac{x}{\epsilon};\epsilon\xi\right)\,dx.
	\end{align}
	
	\paragraph{Step 2} 
	As before, the first term on RHS of~\eqref{heps12} is the Bloch transform of the expression $-i\xi_k\frac{\partial \psi_0}{\partial x_l}(x)a^{\epsilon}_{kl}(x)u^\epsilon(x)$ which converges weakly to \begin{align*}
		-i\xi_k\mathcal{M}_Y(a_{kl})\left(\frac{\partial \psi_0}{\partial x_l}(x)u^*(x)\right)=-i\xi_k a^*_{kl}\left(\frac{\partial \psi_0}{\partial x_l}(x)u^*(x)\right)
	\end{align*} where the last equality is due to~\cref{homtensor}.
	
	\paragraph{Step 3} Now we shall prove that the second term on RHS of~\eqref{heps12} goes to zero. As before, we make use of analyticity of first Bloch eigenfunction with respect to the dual parameter $\eta$ near $0$. We have the following power series expansion in $H^1_\sharp(Y)$ for $\phi_1^\rho(\eta)$ about $\eta=0$:
	
	\begin{align*}
	\phi_1^\rho(y;\eta)=\phi_1^\rho(y;0)+\eta_s\frac{\partial \phi^\rho_1}{\partial \eta_s}(y;0)+\gamma^\rho(y;\eta).
	\end{align*}
	
	We know that $\gamma^\rho(y;0)=0$ and $(\partial \gamma^\rho/\partial \eta_s)(y;0)=0$, therefore, $\gamma^\rho(\cdot;\eta)=O(|\eta|^2)$ in $L^\infty(U;H^1_\sharp(Y))$. We also have $(\partial\gamma^\rho/\partial y_k)(\cdot;\eta)=O(|\eta|^2)$ in $L^\infty(U;L^2_\sharp(Y))$. These orders are uniform in $\rho$ by~\cref{higherestimates}.
	Now,  
	
	\begin{align*}
	\phi_1^{\kappa,\epsilon}(x;\xi)=\phi_1^\rho\left(\frac{x}{\epsilon};\epsilon\xi\right)=\phi_1^\rho\left(\frac{x}{\epsilon};0\right)+\epsilon\xi_s\frac{\partial \phi^\rho_1}{\partial \eta_s}\left(\frac{x}{\epsilon};0\right)+\gamma^\rho\left(\frac{x}{\epsilon};\epsilon\xi\right).
	\end{align*}
	
	Differentiating the last equation with respect to $x_k$, we obtain
	
	\begin{align}\label{derivativeofphi2}
	\frac{\partial}{\partial x_k}\phi_1^\rho\left(\frac{x}{\epsilon};\epsilon\xi\right)=\xi_s\frac{\partial}{\partial y_k}\frac{\partial \phi^\rho_1}{\partial \eta_s}\left(\frac{x}{\epsilon};0\right)+\epsilon^{-1}\frac{\partial \gamma^\rho}{\partial y_k}\left(\frac{x}{\epsilon};\epsilon\xi\right).
	\end{align}
	
	For $\xi$ belonging to the set $\{\xi:\epsilon\xi\in U\mbox{ and }|\xi|\leq M\}$, we have
	
	\begin{align}\label{gammaepsilon22}
	\frac{\partial \gamma^\rho}{\partial y_k}(\cdot;\epsilon\xi)=O(|\epsilon\xi|^2)=\epsilon^2O(|\xi|^2)\leq CM^2\epsilon^2.
	\end{align}
	
	As a consequence,
	
	\begin{align*}
	\epsilon^{-2}\frac{\partial \gamma^\rho}{\partial y_k}(x/\epsilon;\epsilon\xi)\in L^\infty_{\loc}(\mathbb{R}^d_\xi;L^2_\sharp(\epsilon Y)).
	\end{align*}
	
	The second term on the RHS of~\eqref{heps12} is given by
	
	\begin{align}\label{secondterm22}
	\chi_{\epsilon^{-1}U}(\xi)\int_K e^{-ix\cdot\xi}\frac{\partial\psi_0}{\partial x_l}(x)a_{kl}\left(\frac{x}{\epsilon}\right)u^\epsilon(x)\frac{\partial}{\partial x_k}\left(\overline{\phi^{\rho}_1}\left(\frac{x}{\epsilon};\epsilon\xi\right)\right)\,dx.
	\end{align}
	
	Substituting~\eqref{derivativeofphi2} in~\eqref{secondterm22}, we obtain
	
	\begin{align}
	\chi_{\epsilon^{-1}U}(\xi)\int_K e^{-ix\cdot\xi}\frac{\partial\psi_0}{\partial x_l}(x)a_{kl}\left(\frac{x}{\epsilon}\right)u^\epsilon(x)\biggl[\xi_s\frac{\partial}{\partial y_k}\frac{\partial \phi^{\rho}_1}{\partial \eta_s}\left(\frac{x}{\epsilon};0\right)+\epsilon^{-1}\frac{\partial \gamma^\rho}{\partial y_k}\left(\frac{x}{\epsilon};\epsilon\xi\right)\biggr]\,dx.
	\end{align}
	
	In the last expression, the term involving $\gamma^\rho$ goes to zero as $\epsilon\to 0$ in view of~\eqref{gammaepsilon22}.
	
	The other term also goes to zero as $\rho\to\infty$ due to strong convergence of $u^\epsilon$ and the fact that  $$\frac{\partial}{\partial x_k}\left(\frac{\partial\phi^{\rho}_1}{\partial\eta_s}(x/\epsilon;0)\right)=O\left(\frac{1}{\rho^2}\right),$$ as shown in~\cref{higherestimates}.

\subsubsection{Limit of $\mathcal{B}_1^{\kappa,\epsilon}l_4^{\kappa,\epsilon}$} The analysis is the same as before, however the uniform-in-$\rho$ boundedness of $\frac{\partial{\phi^{\rho}_1}}{\partial x_k}\left(\frac{x}{\epsilon};\epsilon\xi\right)$ in $L^2_\sharp(\epsilon Y)$ is due to~\cref{higherestimates}.

Hence, for the regime $\rho\to\infty$, we also recover~\eqref{Bloch2}.

\subsection{\underline{Case 3 : $\rho\to 0$}}
In this regime, all the convergence proofs are the same as in the regime $\rho\to\theta\in(0,\infty)$ except for the convergence of $\mathcal{B}_1^{\kappa,\epsilon}h^{\epsilon}$, which we prove below.

\subsubsection{Limit of $\mathcal{B}_1^{\kappa,\epsilon}h^{\epsilon}$} 
For this limit, all steps except the third are the same, hence we only explain the part of Step $3$ which differs from the regime $\rho\to\theta\in(0,\infty)$. We begin with the following equation which was earlier labelled as~\cref{twotermsarehere}.

\begin{align}\label{twotermsarehereagain}
	\chi_{\epsilon^{-1}U}(\xi)\int_K e^{-ix\cdot\xi}\frac{\partial\psi_0}{\partial x_l}(x)a_{kl}\left(\frac{x}{\epsilon}\right)u^\epsilon(x)\biggl[\xi_s\frac{\partial}{\partial y_k}\frac{\partial \phi^{\rho}_1}{\partial \eta_s}\left(\frac{x}{\epsilon};0\right)+\epsilon^{-1}\frac{\partial \gamma^\rho}{\partial y_k}\left(\frac{x}{\epsilon};\epsilon\xi\right)\biggr]\,dx.
	\end{align}
	
	In the last expression, the term involving $\gamma^\rho$ goes to zero as $\epsilon\to 0$ in view of~\eqref{gammaepsilon2}, whereas the other term has the following limit as $\rho\to 0$:
	
	\begin{align}\label{secondterm3}
	\mathcal{M}_Y\left(a_{kl}(y)\frac{\partial \chi^0_s}{\partial y_k}(y)\right)\xi_s\int_{\mathbb{R}^d}e^{-ix\cdot\xi}\frac{\partial\psi_0}{\partial x_l}(x)u^*(x)\,dx.
	\end{align}
	
	To see this, we write
	
	\begin{align*}
		\frac{\partial}{\partial y_k}\frac{\partial \phi^{\rho}_1}{\partial \eta_s}\left(\frac{x}{\epsilon};0\right)&=\frac{\partial \chi^{\rho}_s}{\partial y_k}\left(\frac{x}{\epsilon}\right)\\
		&=\underbrace{\frac{\partial \chi^0_s}{\partial y_k}\left(\frac{x}{\epsilon}\right)}_{I}+\underbrace{\left[\frac{\partial \chi^{\rho}_s}{\partial y_k}\left(\frac{x}{\epsilon}\right)-\frac{\partial \chi^B_s}{\partial y_k}\left(\frac{x}{\epsilon}\right)\right]}_{II}+\underbrace{\left[\frac{\partial \chi^0_s}{\partial y_k}\left(\frac{x}{\epsilon}\right)-\frac{\partial \chi^B_s}{\partial y_k}\left(\frac{x}{\epsilon}\right)\right]}_{III}.
	\end{align*}
	
	The first term $I$ is responsible for~\cref{secondterm3} due to strong convergence of $u^\epsilon$ in $L^2(K)$ and weak convergence of $a_{kl}\frac{\partial \chi^0_s}{\partial y_k}\left(\frac{x}{\epsilon}\right)$.
	
	For the expressions $II$ and $III$, we make use of~\cref{estimateforzerorho}. Indeed, we obtain $II = O(\rho)+O(\varkappa)$ and $III = O(\varkappa)$. Hence, their contribution to the limit in~\cref{twotermsarehereagain} as $\rho\to 0$ is $O(\varkappa)$.

This completes the modification required for the regime $\rho\to 0$. Instead of~\cref{Bloch2}, we obtain instead \begin{align*}
	a_{kl}^*\xi_k\xi_l\widehat{\psi_o u^*}(\xi)=\widehat{\psi_0 f}-\left(\frac{\partial \psi_0}{\partial x_k}(x)\sigma^*_k(x)\right)^{\bf\widehat{}}(\xi)-i\xi_ka^*_{kl}\left(\frac{\partial \psi_0}{\partial x_l}(x)u^*(x)\right)^{\bf\widehat{}}(\xi) + O(\varkappa).
\end{align*} However, since $\varkappa>0$ is an arbitrary positive number in~\cref{estimateforzerorho}, we also recover~\eqref{Bloch2} for the regime $\rho\to 0$.

\subsection{Proof of the homogenization result}
Taking the inverse Fourier transform in the equation~\eqref{Bloch2}, we obtain the following:

\begin{align}\label{eq1}
(\mathcal{A}^{hom}(\psi_0 u^*)(x))=\psi_0 f-\frac{\partial \psi_0}{\partial x_k}(x)\sigma^*_k(x)-a^*_{kl}\frac{\partial}{\partial x_k}\left(\frac{\partial \psi_0}{\partial x_l}(x)u^*(x)\right),
\end{align}

where the operator $\mathcal{A}^{hom}$ is defined in~\eqref{homoperator}. At the same time, calculating using Leibniz rule, we have:

\begin{align}\label{eq2}
(\mathcal{A}^{hom}(\psi_0 u^*)(x))=(\psi_0(x)\mathcal{A}^{hom}u^*(x))-a^*_{kl}\frac{\partial}{\partial x_k}\left(\frac{\partial \psi_0}{\partial x_l}(x)u^*(x)\right)-a_{kl}^*\frac{\partial\psi_0}{\partial x_k}(x)\frac{\partial u^*}{\partial x_l}(x)
\end{align}

Using equations~\eqref{eq1} and~\eqref{eq2}, we obtain

\begin{align}
\psi_0(x)\left(\mathcal{A}^{hom}u^*-f\right)(x)=\frac{\partial\psi_0}{\partial x_k}\left[a_{kl}^*\frac{\partial u^*}{\partial x_l}(x)-\sigma_k^*(x)\right].\end{align}

Let $\omega$ be a unit vector in $\mathbb{R}^d$, then $\psi_0(x)e^{ix\cdot\omega}\in\mathcal{D}(\Omega)$. On substituting in the above equation, we get, for all $k=1,2,\ldots,d$ and for all $\psi_0\in\mathcal{D}(\Omega)$,

\begin{align}
\psi_0(x)\left[a_{kl}^*\frac{\partial u^*}{\partial x_l}(x)-\sigma_k^*(x)\right]=0.
\end{align}

Let $x_0$ be an arbitrary point in $\Omega$ and let $\psi_0(x)$ be equal to $1$ near $x_0$, then for a small neighbourhood of $x_0$:

\begin{align}
\mbox{ for } k=1,2,\ldots,d,~\left[a_{kl}^*\frac{\partial u^*}{\partial x_l}(x)-\sigma_k^*(x)\right]=0
\end{align}

However, $x_0\in\Omega$ is arbitrary, so that

\begin{align}
\mathcal{A}^{hom}u^*=f\mbox{ and }\sigma^*_k(x)=a_{kl}^*\frac{\partial u^*}{\partial x_l}(x).
\end{align}

Thus,we have obtained the limit equation in the physical space. This finishes the proof of Theorem~\ref{homog}.

\section{Contribution of Higher Modes}\label{highermodes}
The proof of the qualitative homogenization theorem only requires the first Bloch transform. It is not clear whether the higher Bloch modes make any contribution to the homogenization limit. In this section, we show that they do not. We know that Bloch decomposition is the isomorphism $L^2(\mathbb{R}^d)\cong  L^2(Y^{'}/\epsilon;\ell^2(\mathbb{N}))$ which is reflected in the inverse identity~\eqref{Blochinverseepsilon}. For simplicity, take $\Omega=\mathbb{R}^d$ and consider the equation $\mathcal{A}^{\kappa,\epsilon} u^\epsilon=f$ in $\mathbb{R}^d$ which is equivalent to

 \begin{align*}\mathcal{B}^{\kappa,\epsilon}_m \mathcal{A}^{\kappa,\epsilon} u^\epsilon(\xi)=\mathcal{B}^{\kappa,\epsilon}_mf(\xi)\quad\forall m\geq 1,\forall\,\xi\in\epsilon^{-1}Y^{'}.\end{align*} We claim that one can neglect all the equations corresponding to $m\geq 2$.

\begin{proposition}
	Let $$v^{\kappa,\epsilon}(x)=\int_{\epsilon^{-1}Y^{'}}\sum_{m=2}^{\infty}\mathcal{B}^{\kappa,\epsilon}_m u^\epsilon(\xi)\phi_m^{\kappa,\epsilon}(x;\xi)e^{ix\cdot\xi}\,d\xi,$$ then $||v^{\kappa,\epsilon}||_{L^2(\mathbb{R}^d)}\leq c\epsilon.$
\end{proposition}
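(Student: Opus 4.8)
The plan is to diagonalize $\mathcal{A}^{\kappa,\epsilon}$ in Bloch variables and combine the diagonalization with the uniform spectral gap below the first Bloch branch. Since $u^\epsilon\in D(\mathcal{A}^{\kappa,\epsilon})$ solves $\mathcal{A}^{\kappa,\epsilon}u^\epsilon=f$, the diagonalization formula~\eqref{diagonalizationepsilon} gives, for every $m\geq 1$ and a.e. $\xi\in\epsilon^{-1}Y^{'}$,
\[
\lambda_m^{\kappa,\epsilon}(\xi)\,\mathcal{B}_m^{\kappa,\epsilon}u^\epsilon(\xi)=\mathcal{B}_m^{\kappa,\epsilon}f(\xi).
\]
For $m\geq 2$ I would use the homothety $\lambda_m^{\kappa,\epsilon}(\xi)=\epsilon^{-2}\lambda_m^\rho(\epsilon\xi)$ (valid because $\epsilon\xi\in Y^{'}$ when $\xi\in\epsilon^{-1}Y^{'}$, and $\kappa=\rho\epsilon$) together with the spectral gap of~\cref{lowrhobound}, which yields $\lambda_m^\rho(\epsilon\xi)\geq\alpha\lambda_2^N$ uniformly in $\rho$ and in $\xi$. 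Hence $\lambda_m^{\kappa,\epsilon}(\xi)\geq\epsilon^{-2}\alpha\lambda_2^N>0$, so we may divide and obtain
\[
\big|\mathcal{B}_m^{\kappa,\epsilon}u^\epsilon(\xi)\big|\leq\frac{\epsilon^2}{\alpha\lambda_2^N}\,\big|\mathcal{B}_m^{\kappa,\epsilon}f(\xi)\big|,\qquad m\geq 2.
\]

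Next I would apply Parseval's identity~\eqref{parsevalblochepsilon} to $v^{\kappa,\epsilon}$, whose Bloch coefficients are $\mathcal{B}_m^{\kappa,\epsilon}v^{\kappa,\epsilon}=\mathcal{B}_m^{\kappa,\epsilon}u^\epsilon$ for $m\geq 2$ and $0$ for $m=1$ — this is precisely the content of the inverse formula~\eqref{Blochinverseepsilon} restricted to the higher modes. Thus
\[
\|v^{\kappa,\epsilon}\|_{L^2(\mathbb{R}^d)}^2=\sum_{m=2}^{\infty}\int_{\epsilon^{-1}Y^{'}}\big|\mathcal{B}_m^{\kappa,\epsilon}u^\epsilon(\xi)\big|^2\,d\xi\leq\frac{\epsilon^4}{(\alpha\lambda_2^N)^2}\sum_{m=2}^{\infty}\int_{\epsilon^{-1}Y^{'}}\big|\mathcal{B}_m^{\kappa,\epsilon}f(\xi)\big|^2\,d\xi\leq\frac{\epsilon^4}{(\alpha\lambda_2^N)^2}\|f\|_{L^2(\mathbb{R}^d)}^2,
\]
the last inequality being Parseval again, now for $f$. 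Taking square roots gives $\|v^{\kappa,\epsilon}\|_{L^2(\mathbb{R}^d)}\leq\frac{\epsilon^2}{\alpha\lambda_2^N}\|f\|_{L^2(\mathbb{R}^d)}$, which is in particular $\leq c\epsilon$; in fact the same argument produces the sharper rate $\epsilon^2$.

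The only point that genuinely needs care — and the main obstacle — is ensuring that the lower bound on $\lambda_m^{\kappa,\epsilon}$ does not degenerate across the three regimes, in particular as $\rho\to 0$ or $\rho\to\infty$. This is exactly what the $\rho$-independence of the constant $\alpha\lambda_2^N$ in~\cref{lowrhobound} delivers, its proof simply discarding the nonnegative biharmonic term in the Rayleigh quotient. Note also that the argument is entirely confined to $m\geq 2$: the (delicate) behaviour of the first branch $\lambda_1^{\kappa,\epsilon}$ near $\xi=0$ and the well-posedness of $\mathcal{A}^{\kappa,\epsilon}u^\epsilon=f$ on the first mode never intervene, since $v^{\kappa,\epsilon}$ contains no $m=1$ contribution. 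A minor bookkeeping remark is that $\mathcal{B}_m^{\kappa,\epsilon}f$ is well defined and square-summable because $f\in L^2(\mathbb{R}^d)$, and that the diagonalization~\eqref{diagonalizationepsilon} is applicable here since $u^\epsilon\in D(\mathcal{A}^{\kappa,\epsilon})$ under the standing assumptions.
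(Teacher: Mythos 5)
Your proof is correct, but it follows a genuinely different route from the one in the paper. The paper never divides the diagonalized equation by the eigenvalue; instead it starts from the energy bound $\int_{\mathbb{R}^d}\mathcal{A}^{\kappa,\epsilon}u^\epsilon\,\overline{u^\epsilon}\leq C$ (available from the uniform bounds on $u^\epsilon$ in the hypotheses of \cref{homog}), converts it via the Plancherel formula \eqref{Plancherelepsilon} and the diagonalization \eqref{diagonalizationepsilon} into $\sum_{m\geq 1}\int_{\epsilon^{-1}Y^{'}}\lambda_m^{\kappa,\epsilon}(\xi)\,|\mathcal{B}_m^{\kappa,\epsilon}u^\epsilon(\xi)|^2\,d\xi\leq C$, and then uses the $\rho$-uniform spectral gap of \cref{lowrhobound} together with $\lambda_m^{\kappa,\epsilon}(\xi)=\epsilon^{-2}\lambda_m^\rho(\epsilon\xi)$ only once, which yields $\sum_{m\geq2}\int|\mathcal{B}_m^{\kappa,\epsilon}u^\epsilon|^2\leq C\epsilon^2$ and hence the stated rate $c\epsilon$. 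You instead exploit the pointwise relation $\lambda_m^{\kappa,\epsilon}(\xi)\mathcal{B}_m^{\kappa,\epsilon}u^\epsilon(\xi)=\mathcal{B}_m^{\kappa,\epsilon}f(\xi)$ and divide by the eigenvalue on each higher mode, which uses the spectral gap at full strength and gives the sharper bound $\|v^{\kappa,\epsilon}\|_{L^2(\mathbb{R}^d)}\lesssim\epsilon^2\|f\|_{L^2(\mathbb{R}^d)}$; moreover your constant depends only on $\|f\|_{L^2}$, $\alpha$ and $\lambda_2^N$, and you do not need the uniform energy bounds on $u^\epsilon$ at all, only that $u^\epsilon\in D(\mathcal{A}^{\kappa,\epsilon})$ so that \eqref{diagonalizationepsilon} applies (which holds in the setting of the proposition, where the equation is posed on all of $\mathbb{R}^d$ with $f\in L^2(\mathbb{R}^d)$). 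The paper's form-based argument, on the other hand, is slightly more robust in that it only requires control of the quadratic form $\int\mathcal{A}^{\kappa,\epsilon}u^\epsilon\,\overline{u^\epsilon}$ rather than membership in the operator domain, at the price of the weaker rate $\epsilon$. Both treatments rely on the same two structural inputs — the diagonalization of $\mathcal{A}^{\kappa,\epsilon}$ in the Bloch variables and the $\rho$-independent lower bound on the second Bloch eigenvalue — so your identification of the key obstacle (uniformity of the gap across the three regimes) matches the paper exactly.
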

\begin{proof}
	Due to boundedness of the sequence $(u^\epsilon)$ in $H^2(\mathbb{R}^d)$, we have
	
	\begin{align}
	\int_{\mathbb{R}^d}\mathcal{A}^{\kappa,\epsilon} u^\epsilon\,\overline{u^\epsilon}\leq C.
	\end{align}
	
	However, by Plancherel Theorem~\eqref{Plancherelepsilon}, we have
	
	\begin{align*}
	\int_{\mathbb{R}^d} \mathcal{A}^{\kappa,\epsilon} u^\epsilon\, \overline{u^\epsilon}=\sum_{m=1}^{\infty}\int_{\epsilon^{-1}Y^{'}}\left(\mathcal{B}^{\kappa,\epsilon}_m\mathcal{A}^{\kappa,\epsilon} u^\epsilon\right)(\xi)\,\overline{\mathcal{B}^{\kappa,\epsilon}_mu^\epsilon(\xi)}\,d\xi\leq C
	\end{align*}
	
	Using~\eqref{diagonalizationepsilon}, we have
	
	\begin{align*}
	\sum_{m=1}^{\infty}\int_{\epsilon^{-1}Y^{'}}\lambda^{\kappa,\epsilon}_m(\xi)|{\mathcal{B}^{\kappa,\epsilon}_mu^\epsilon(\xi)}|^2\,d\xi\leq C.
	\end{align*}
	
	Now, by~\cref{lowrhobound}
	
	\begin{align}
	\lambda_m^\rho(\eta)\geq\alpha\lambda_2^N>0\quad\forall\, m\geq 2\quad\forall\,\eta\in Y^{'},
	\end{align} 
	where $\lambda^N_2$ is the second eigenvalue of Laplacian on $Y$ with Neumann boundary condition on $\partial Y$. Since $\lambda^{\kappa,\epsilon}_m(\xi)=\epsilon^{-2}\lambda^{\rho}_m(\epsilon\xi)$, we obtain 
	
	\begin{align*}
	\sum_{m=2}^{\infty}\int_{\epsilon^{-1}Y^{'}}|{\mathcal{B}^{\kappa,\epsilon}_mu^\epsilon(\xi)}|^2\,d\xi\leq C\epsilon^2.
	\end{align*}
	
	By Parseval's identity~\cref{parsevalblochepsilon}, the LHS equals $||v^{\kappa,\epsilon}||^2_{L^2(\mathbb{R}^d)}$. This completes the proof.
\end{proof}


\bibliographystyle{plain}

\end{document}